\newcommand{\CBI}{{\operatorname{CBI}}}
\newcommand{\D}{\mathbb{D}}
\newcommand{\E}{\mathbb{E}}
\newcommand{\EE}{\mathscr{E}}
\newcommand{\eps}{\varepsilon}
\newcommand{\F}{\mathcal{F}}
\newcommand{\GG}{\mathcal{G}}
\newcommand{\hp}{\hat{\phi}}
\newcommand{\htu}{{\hat{\tau}^\uparrow}}
\newcommand{\htd}{{\hat{\tau}^\downarrow}}
\newcommand{\htds}{{\hat{\tau}^{\downarrow,-}}}
\newcommand{\htdl}{{\hat{\tau}^{\downarrow,+}}}
\newcommand{\tu}{\tau^{\uparrow}}
\newcommand{\td}{\tau^{\downarrow}}
\newcommand{\tds}{\tau^{\downarrow,-}}
\newcommand{\tdl}{\tau^{\downarrow,+}}
\newcommand{\Leb}{{\operatorname{Leb}}}
\newcommand{\M}{\mathscr{M}}
\newcommand{\n}{\mathfrak{n}}
\newcommand{\NN}{\mathcal{N}}
\newcommand{\Om}{\Omega}
\newcommand{\PD}{\operatorname{PD}}
\newcommand{\PR}{\mathbb{P}}
\newcommand{\Q}{\mathbb{Q}}
\newcommand{\QQ}{\tilde{\mathbb{Q}}}
\newcommand{\R}{\mathbb{R}}
\newcommand{\TIS}{\operatorname{SCBI}}
\newcommand{\w}{\omega}
\theoremstyle{plain}
\newtheorem{thm}{Theorem}
\newtheorem{lem}{Lemma}
\newtheorem{prop}{Proposition}
\newtheorem{cor}{Corollary}
\newtheorem{claim}{Claim}
\theoremstyle{remark}
\newtheorem{definition}{Definition}
\newtheorem{rmk}{Remark}
\begin{document}

\begin{frontmatter}
\title{Occupation Times for Time-changed Processes with Applications to Parisian Options}
\runtitle{Occupation times for Parisian Options}

\begin{aug}
\author[A]{\fnms{Joonyong} \snm{Choi}\ead[label=e1]{jy27@uw.edu}},
\and \author[A]{\fnms{David} \snm{Clancy, Jr.}\ead[label=e2,mark]{djclancy@uw.edu}}

\address[A]{University of Washington, Seattle, \printead{e1}}

\address{\printead{e2}}
\end{aug}

\begin{abstract}
Stochastic processes time-changed by an inverse subordinator have been suggested as a way to model the price of assets in illiquid markets, where the jumps of the subordinator correspond to periods of time where one is unable to sell an asset. We develop an excursion theory for time-changed reflected Brownian motion and use this to express the price of certain European options with Parisian barrier condition in terms of solutions of a time-fractional PDE. We provide a general description of the occupation measures of time-changed processes and use this to prove a Ray-Knight theorem for the occupation measure of a time-changed Brownian motion with negative drift. We also show that the duration of the excursions on finite time intervals obey a Poisson-Dirichlet distribution when a reflected Brownian motion is time-changed by an inverse stable subordinator.
\end{abstract}

\begin{keyword}[class=MSC2020]
\kwd{60G17, 60J55, 26A33}	
\end{keyword}

\begin{keyword}
\kwd{time-changed Brownian motion}
\kwd{inverse subordinator}
\kwd{occupation measure}
\kwd{excursion measure}
\end{keyword}

\end{frontmatter}


\section{Introduction}

	It is well-known that functionals of Brownian motion or, more generally, L\'{e}vy processes appear in pricing economic assets. Often these processes have infinite variation and hence are incapable of handling situation where the price of an asset is constant for a period of time. Therefore changes to the stochastic model should be made when the underlying assets are 
	illiquid, i.e. the assets cannot be traded quickly. One attempt to overcome this shortcoming is to introduce a random time-change $E = (E(t);t\ge 0)$ which is the inverse of a subordinator \cite{HL.21,Magdziarz.09}. See also \cite{GMS.01, MRGS.00, SGM.00} for an approach involving discrete observations of prices and a limiting procedure, whose idea originates from \cite{MS.65} in physics. The governing Fokker-Planck equation for the non-time-changed stochastic process is often replaced with a partial differential equation (PDE) where the time derivative is replaced with some fractional time derivative (time-fractional PDE). See \cite{HL.21} for an example involving illiquid markets and see \cite{Chen.17} and references therein for a more general approach of using time-changed stochastic processes to solve time-fractional PDEs. These time-changed processes, or the governing time-fractional PDEs, have been used to study subdiffusive behavior in physics as well, where the motionless behavior stems from trapping of particles \cite{MS.19,MK.04}.
	
 	For concreteness, let us turn to the subdiffusive Black-Scholes model with zero interest rate. In the Black-Scholes model, the price of a stock is modeled by a geometric Brownian motion $X = (X(t);t\ge 0)$. In the subdiffusive model, corresponding to an illiquid market, one has to modify the stock price by introducing a time-change $E = (E(t);t\ge 0)$ which is the inverse of a strictly increasing L\'{e}vy process $S = (S(t);t\ge 0)$ independent of the process $X$. That is, in the illiquid market the stock price is modeled by process $X^* = (X^*(t);t\in[0,T])$ defined by $X^*(t) = X(E(t))$ where
	\begin{equation}\label{eqn:Edef}
	E(t) = \inf\{u: S(u)>t\}.
	\end{equation} 
	This model is considered in \cite{Magdziarz.09, MG.12} where the authors show that this model of an illiquid market is arbitrage-free but incomplete. One has to be careful about how an interest rate is incorporated - see \cite{MG.12}.
	
	Standard barrier options are among the most popular derivative contracts \cite{carr1995two}. Depending on its type, the value of the option depends on whether the underlying asset price has or has not hit some predefined price, the barrier, before the maturity date. Due to the one-touch nature of these options, influential agents can 
	manipulate the price of the underlying asset when the price is close to the barriers. To mitigate this problem Chesney et. al. \cite{CJY.97} introduced the so-called Parisian barrier option which depends on the occupation times of the price of underlying assets. 
	Instead of the value of the option depending on whether or not the underlying asset hits some predefined level, the underlying asset has to spend a predefined amount of time above or below the barrier before maturation. For example, the owner of a down-and-out option loses the option when the asset price $X$ remains below level $L$ for $D$ units of time and this has to occur before maturation at time $T$. Therefore, in order to understand how to price Parisian options, one needs to understand properties of the occupation measure and excursions of the process $X$.
	
	There is a vast amount of literature involving the occupation times of Brownian motion. In particular, the length of the longest excursion of a Brownian motion on some fixed (possibly random) time interval has been well-studied. See \cite{PY.97} for a survey and generalizations of this. This literature allowed the authors of \cite{CJY.97} to obtain some complicated, although explicit, formulas for the valuation of Parisian options using Laplace transforms. With this said, it appears that Parisian options for the time-changed processes is understudied. This paper is devoted to filling in some of the mathematical gaps in this literature. We develop an excursion theory for time-changed Brownian motion and then use this to price a down-and-out European Parisian option when the stock price is modeled by a time-changed geometric Brownian motion. We relate this price to solutions of time-fractional PDEs. We also develop some process-level descriptions relating the occupation measures for processes time-changed by inverse subordinators to those of the original non-time-changed process. In particular, we obtain an explicit Ray-Knight type theorem for a time-changed Brownian motion (with drift) killed upon hitting zero.

	\subsection{Main Results}
	
	
	\subsubsection{Excursion Intervals of $X^*$}
	
	Consider the situation where $X$ is a reflected Brownian motion and $S$ is an independent $\beta$-stable subordinator for $\beta\in(0,1)$. The process $S$ is uniquely characterized by its Laplace transform
	\begin{equation}\label{eqn:beta_Stable}
	    \E\left[ \exp(-\lambda S(t)) \right] = \exp(-ct\lambda^\beta), \qquad \forall t, \lambda>0
	\end{equation} for some constant $c>0$. See \cite[Chapter III]{Bertoin.96} for more information on subordinators. Since its L\'{e}vy measure $\nu(dy) = \tilde{c} y^{-1-\beta}1_{[y>0]}\,dy$ has infinite mass, $S$ is strictly increasing. 
	
	Before turning to our first theorem, let us briefly recall the Poisson-Dirichlet distribution $\PD(\alpha,\theta)$ as in \cite{PY.97}. Since our results only rely on $\PD(\alpha,0)$, we will focus on this case. Let $J_1\ge J_2\ge \dotsm >0$ be a rearrangement of all the jumps of $S$ on the interval $[0,1]$: 
	\begin{equation*}
		\{J_1,J_2,\dotsm\}  = \{S(t)-S(t-): t\in[0,1]\} \setminus \{0\}.
	\end{equation*} Consider the random element $V = (V_1,V_2,\dotsm)$ of the infinite simplex $\Delta_\infty:= \{(x_j)\in [0,1]^\infty: x_1\ge x_2\ge \dotsm \ge 0, \sum_{j} x_j = 1\}$ defined by $V_n = J_n/S(1) = J_n/ \sum_{i} J_i$. Then $\PD(\beta,0)$ is the law of the vector $V$ in $\Delta_\infty$.
	
	After developing some excursion theory for the time-changed reflected Brownian motion $X^*$, we obtain the following Theorem:
		\begin{thm}\label{thm:PD}
	Let $X$ denote a reflected Brownian motion, and let $X^*(t) = X(E(t))$ where $E$ is the right-continuous inverse of a $\beta$-stable subordinator defined by \eqref{eqn:Edef}. Then the ordered lengths of the relatively open intervals of $\{u\in[0,t]: X^*(u)>0\}$, say $U_1(t)\ge U_2(t)\ge\dotsm> 0$, are distributed as $t$ times a $\PD(\beta/2,0)$ random variable:
	\begin{equation*}
		\left(\frac{U_1(t)}{t},\frac{U_2(t)}{t},\dotsm\right) \overset{d}{=} \PD\left(\frac{\beta}{2},0\right).
	\end{equation*}
	\end{thm}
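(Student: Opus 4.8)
The plan is to show that the zero set of $X^*$ is the closure of the range of a $\beta/2$-stable subordinator, and then to quote the classical description of $\PD(\alpha,0)$ in terms of stable subordinators. I first recall the structure of $X$ itself: writing $L$ for the local time of the reflected Brownian motion $X$ at $0$ and $\sigma:=L^{-1}$ for its right-continuous inverse, $\sigma$ is a stable subordinator of index $1/2$ and the zero set $Z:=\{s\ge 0: X(s)=0\}$ equals the closed range $\overline{\{\sigma(r):r\ge 0\}}$. The substantive step is then to establish that, almost surely,
\begin{equation*}
  \{u\ge 0: X^*(u)=0\} \;=\; \overline{\{\,S(\sigma(r)):r\ge 0\,\}}.
\end{equation*}
Since $X\circ E$ is continuous (as $E$ is continuous, being the inverse of a strictly increasing process), the left side is closed; and because $S$ is strictly increasing one has $E(S(r))=r$ for every $r$, so $X^*(S(\sigma(r)))=X(\sigma(r))=0$, which gives the inclusion ``$\supseteq$''.

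For ``$\subseteq$'', suppose $X^*(u)=0$, i.e.\ $E(u)\in Z$. The key observation is that, since $S$ is independent of $X$ and $\Leb(Z)=0$, almost surely no jump time of $S$ lies in $Z$ (the jump times of $S$ form a Poisson point process whose projection onto the time axis charges no Lebesgue-null set). Consequently $E$ is not locally constant at such a $u$, and tracking the fibers $\{u:E(u)=r\}=[S(r-),S(r)]$ one reads off that $u$ is either a point of $\{S(\sigma(r)):r\ge0\}$ or a left limit of such points, hence lies in the closure. (This is where one uses that the regenerative structure of $X^*$ at $0$ — which is precisely what the excursion theory above sets up — survives the time change; if that identification of the zero set is recorded earlier, this paragraph can be replaced by a reference to it.)

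Granting the displayed identity, the rest is short. The process $s\mapsto S(\sigma(s))$ is again a subordinator, and composing Laplace exponents — $\phi_S(\lambda)=c\lambda^\beta$ with $\phi_\sigma(\mu)=c'\mu^{1/2}$, see \cite[Chapter~III]{Bertoin.96} — shows its exponent is a constant multiple of $\lambda^{\beta/2}$, so $S\circ\sigma$ is a stable subordinator of index $\beta/2$. Finally, the Pitman--Yor description of $\PD(\alpha,0)$ via stable subordinators \cite{PY.97} states that for a stable subordinator of index $\alpha\in(0,1)$ and any fixed $t>0$, the ranked lengths of the complementary intervals of $[0,t]$ relative to the closure of its range, each divided by $t$, have law $\PD(\alpha,0)$ (for $\alpha=1/2$ this is the classical statement about normalized excursion lengths of reflected Brownian motion on $[0,t]$). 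Since $\{u\in[0,t]:X^*(u)>0\}=[0,t]\setminus\overline{\{S(\sigma(r)):r\ge0\}}$, applying this with $\alpha=\beta/2$ yields the theorem.

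The main obstacle is the inclusion ``$\subseteq$'' in the identification of the zero set: one must show that the dense, countable family of intervals on which $E$ is constant — corresponding to the jumps of $S$ — produces no zeros of $X^*$ beyond the closed range of $S\circ\sigma$, and here the independence of $S$ and $X$ together with $\Leb(Z)=0$ is essential. The stable-index computation and the appeal to \cite{PY.97} are then routine; the only point to be mindful of in the latter is that it is the normalization by $t$ — rather than by the subordinator's value at a fixed local time — that produces the law $\PD(\alpha,0)$, and this is exactly the content of the Pitman--Yor correspondence.
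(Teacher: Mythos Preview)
Your proposal is correct and follows essentially the same route as the paper: identify the zero set of $X^*$ with the closed range of the composed subordinator $S\circ\sigma$ (the paper writes this as $S(\tau_\cdot)$ via L\'evy's construction, which is the same object), check that this composition is $\beta/2$-stable, and invoke the Pitman--Yor correspondence. The paper records the key ingredients --- that $\hat\tau_x=S(\tau_x)$ and that $S$ almost surely does not jump on the independent Lebesgue-null set $\{t:R(t)=0\}$ --- earlier in its excursion-theory section, so its proof of the theorem itself is terser; your more self-contained justification of the ``$\subseteq$'' inclusion is exactly the argument those earlier observations encode.
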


	   Observe that this conclusion does not depend on the particular value of $c$ in \eqref{eqn:beta_Stable}. While this gives the joint distribution of the lengths of all the excursions (appropriately reordered), it is restricted to simply the case of a stable subordinator and the excursions of a reflected Brownian motion at zero. More careful analysis would have to be done in order to handle the case of time-changing a Brownian motion (with drift) by a subordinator which is not stable or even when examining the excursions above a level $b>0$. 
	   
	\subsubsection{Price of an option}
	
	Recall the subdiffusive Black-Scholes model discussed above. The process $X^*$ is the solution to 
	\begin{equation}\label{eqn:agbm}
	dX^*(t) = X^*(t) \left( \mu dE(t) + \sigma d W(E(t))\right),\qquad X^*(0) = x>0,
	\end{equation} where $W$ is a standard Brownian motion. Here $E$ is defined by \eqref{eqn:Edef} for some strictly increasing subordinator $S$ is independent of $W$. This is defined on some filtered probability space $(\Om,\F,\{\F_t\}_{t\ge 0} ,\QQ)$ where $\F_t$ is the standard augmentation of $\sigma\left\{X^*(s);s\le t\right\}$ and $\QQ$ is the physical measure. That is, $W$ is a $\QQ$-Brownian motion. As is described in \cite{MG.12}, there exists a (not unique!) equivalent martingale measure $\Q$ under which $X^*$ is an $(\F_t)$-martingale.

	  A Parisian option depends on several parameters which we now fix: the barrier level $L$; an excursion duration $D$; and a maturation time $T$. Consider the first time $H_{L,D}^-(w)$ that a continuous path $w:\R_+\to \R$ stays strictly below level $L$ for duration $D$. That is 
	  \begin{equation}\label{eqn:HLD_def}
	  H_{L,D}^-(w) = \inf\{t: 1_{[w(t)< L]} (t- g_L^w(t))> D \},\qquad g_L^w(t) = \sup\{r\le t: w(r) = L\},
	  \end{equation} with the convention that $\sup\emptyset = 0$ and $\inf\emptyset  = +\infty$. Note that $g_{L}^{w}(t)$ is the last time the path $w$ hits level $L$. The down-and-in European Parisian option with payoff $\Phi$ is therefore priced as \begin{equation}\label{eqn:CdiForTimeChange}
	  V_{d,i}^*(x, T,L,D,\Phi) = \E_{\Q} \left[1_{[H^-_{L,D}(X^*)\le T]}\Phi(X^*(T)) \right].
	  \end{equation}
	  
	  The next theorem describes the price of a down-and-in Parisian option in terms of solutions to time-fractional partial differential equations when the barrier level $L$ is $x$. For more information on time-fractional PDEs, see Section \ref{sec:timeFracPDE}. See also equation \eqref{eqn:genCaputo} for the definition of the generalized time-fractional derivative $\partial_{t}^{S}$. 
	  \begin{thm}\label{thm:priceTheorem}
	  Consider the above set-up. Assume the L\'evy measure of $S$ has infinite mass and denote by $\phi$ the Laplace exponent of $S$ as in \eqref{eqn:LaplaceExponent}. Define the measure $\pi$ on $\R_+^2$ by $\pi(A\times B) = \int_{B} \PR(S(t)\in A) \frac{1}{\sqrt{2\pi t^3}}\,dt$. Let $\pi^+(A\times B) := \pi\left( (A\cap (D,\infty))\times B\right)$, $\pi^{-}(A\times B):=\pi^-((A\cap (0,D])\times B)$ and let $\mu = \pi^+(\R_+\times \R_+)<\infty$. 
	  	
	 Suppose that $\Phi:\R_+\to \R$ is smooth and vanishes in a neighborhood containing $x$, the initial stock price. Then a price of the down-and-in European Parisian option with barrier $L = x$ and payoff $\Phi$ is
	  \begin{equation*}
	  \begin{split}
	  V_{d,i}^*(x, T, x, D, \Phi)	  &= \int 1_{[s\le T-D]} e^{-\frac{\sigma^2}{8} v} u_1(0,0; T-s) m_1(ds,dv) \\
	  &\qquad + \int 1_{[D\le s \le T]} e^{-\frac{\sigma^2}{8} v} u_2(0,0;T-s)\,m_2(ds,dv),
	  \end{split}
	  \end{equation*}
	  where the Laplace transforms of the measures $m_j(ds,dv)$ are
	  \begin{equation*}
	 \begin{split} \int e^{-\lambda s - \theta v}\,m_1(ds,dv) &= \frac{\mu}{\displaystyle \mu + \sqrt{\frac{1}{2}(\phi(\lambda)+\theta)} +\frac{1}{2} \int_{\R_+^2} (1-e^{-\lambda s - \theta v}) \pi^-(ds,dv)},\\
	 \int e^{-\lambda s - \theta v}\,m_2(ds,dv) &= \frac{\displaystyle \frac{1}{2}\int_{\R_+^2} e^{-\lambda s - \theta v} \pi^+(ds,dv)}{\displaystyle \mu + \sqrt{\frac{1}{2}(\phi(\lambda)+\theta)} +\frac{1}{2} \int_{\R_+^2} (1-e^{-\lambda s - \theta v}) \pi^-(ds,dv)};
	 \end{split}
	  \end{equation*}
	  here $u_1(z,s;t)$ is the solution to the time-fractional PDE 
	  \begin{equation*}
	  \left\{\begin{array}{l}
	  \partial_t^S u = \left(\frac{1}{2} \frac{\partial^2}{\partial z^2} + \frac{1}{2z} \frac{\partial}{\partial z} + \frac{\partial}{\partial s}\right) u\\
	  u(z,s;0) = \exp\left\{-\frac{\sigma}{2} z - \frac{\sigma^2}{8} s\right\} \Phi\left( xe^{-\sigma z}\right)
	  \end{array}
	  \right.;
	  \end{equation*}
	  and $u_2$ is the solution to
	   \begin{equation*}
	  \left\{\begin{array}{l}
	  \partial_t^S u = \left(\frac{1}{2} \frac{\partial^2}{\partial z^2} +  \frac{\partial}{\partial s}\right) u\\
	  u(z,s;0) = \exp\left\{\frac{\sigma}{2} z - \frac{\sigma^2}{8} s\right\} \Phi(xe^{\sigma z})
	  \end{array}
	  \right..
	  \end{equation*}
	  \end{thm}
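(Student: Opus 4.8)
The plan is to reduce the valuation to a question about the excursions away from $0$ of a time‑changed drifted Brownian motion — for which the excursion theory developed in the previous sections applies — to locate the barrier‑triggering excursion via the associated Poisson point process, and then to recognize the two resulting conditional expectations as the stochastic representations of the time‑fractional PDEs for $u_1$ and $u_2$.

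\textbf{Step 1 (reduction to a time‑changed drifted Brownian motion).} Under the martingale measure $\Q$ the geometric factor driving $X^*$ carries no drift, so we may write $X^*(t)=x\exp\!\big(\sigma\,\tilde W(E(t))\big)$, where $\tilde W$ is a $\Q$‑Brownian motion with drift $-\tfrac{\sigma}{2}$ and $E$ is the inverse subordinator, independent of $\tilde W$. Since $L=x$, we have $\{X^*(u)<x\}=\{\tilde W(E(u))<0\}$ and $\{X^*(u)=x\}=\{\tilde W(E(u))=0\}$, so $H^-_{x,D}(X^*)=\htd$, the first real time at which $\tilde W\circ E$ has spent $D$ units strictly below $0$ since last visiting $0$; write $\htds$ for the left endpoint of the triggering down‑excursion (so $\htd=\htds+D$) and $\htdl>\htd$ for its right endpoint. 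Thus $V^*_{d,i}(x,T,x,D,\Phi)=\E_\Q\!\big[\,1_{[\htd\le T]}\,\Phi\big(x e^{\sigma \tilde W(E(T))}\big)\big]$.

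\textbf{Step 2 (excursion decomposition).} By the excursion theory for time‑changed Brownian motion with drift, the excursions of $\tilde W\circ E$ away from $0$ form a Poisson point process in the local time of $\tilde W\circ E$, obtained by pushing the It\^o excursion process of $\tilde W$ through the time change: an excursion of original lifetime $\zeta$ acquires real lifetime $\zeta$ plus the $S$‑mass falling inside it, and the joint (real lifetime, original lifetime) law of an excursion is the It\^o‑measure lifetime law pushed through this operation, i.e.\ the measure $\pi$ (the $t^{-3/2}$ weight being the It\^o lifetime law and $\PR(S(t)\in\cdot)$ the real‑time shift given original lifetime $t$). The inverse local time of $\tilde W\circ E$ at $0$ is $S\circ\tau$, with $\tau$ the stable‑$\tfrac12$ inverse local time of $\tilde W$; after the drift is removed (Step 3) and with both the real and the original clock recorded, it is a subordinator with bivariate Laplace exponent $\sqrt{\tfrac12(\phi(\lambda)+\theta)}$. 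Hence the barrier is triggered by the first down‑excursion of real lifetime $>D$: by the Poisson structure the local time at which this happens is exponential of rate $\mu=\pi^+(\R_+^2)$, and independently the pre‑trigger configuration — short down‑excursions (described by $\pi^-$) together with the zero set — has a bivariate elapsed‑time functional whose Laplace transform is the denominator appearing in the formulas for $\widehat{m_1}$ and $\widehat{m_2}$. Splitting according to whether $T$ still lies inside the triggering down‑excursion or past $\htdl$, and using the strong Markov property of $\tilde W$ together with the independence of $S$, produces two contributions carrying the elapsed (real, original) times $(s,v)$ at the relevant reference point on the triggering excursion, whence the ranges $\{s\le T-D\}$ and $\{D\le s\le T\}$.

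\textbf{Step 3 (identification with the PDEs, and conclusion).} In each contribution a Cameron--Martin change of measure removes the drift $-\tfrac{\sigma}{2}$ of $\tilde W$: this yields the external factor $e^{-\frac{\sigma^2}{8}v}$ (for the original time spent outside the straddling excursion) and the factors $e^{\pm\frac{\sigma}{2}z-\frac{\sigma^2}{8}s}$ inside the PDE initial data (for the original time spent inside it). When $T$ lies past $\htdl$, the process restarts from $0$ as a fresh time‑changed drifted Brownian motion run for real time $T-\htdl$; adjoining the original‑time coordinate $s$ (so that $\partial^S_t$ converts original to real time through $S$), its payoff expectation is the stochastic representation of $\partial^S_t u=(\tfrac12\partial_z^2+\partial_s)u$ with the stated initial datum, i.e.\ $u_2$. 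When $T$ lies inside the triggering down‑excursion, the value $\tilde W(E(T))$ is, after the reflection $z\mapsto -z$ reflected in the argument $\Phi(xe^{-\sigma z})$, the value of an excursion conditioned to have real lifetime exceeding $D$, observed at the corresponding age; the Bismut/It\^o entrance‑law description gives this value the law of a two‑dimensional Bessel process started from $0$, which is the origin of the drift term $\tfrac1{2z}\partial_z$, and adjoining the original‑time coordinate as above identifies the resulting representation with $u_1$. Finally, taking Laplace transforms in $(s,v)$ of the two elapsed‑time measures via the exponential local‑time clock and the independent pre‑trigger point process of Step 2 produces exactly $\widehat{m_1}$ and $\widehat{m_2}$: the numerator $\mu$ of $m_1$ is the rate of the knock‑in clock; the numerator $\tfrac12\int e^{-\lambda s-\theta v}\pi^+(ds,dv)$ of $m_2$ is the Laplace functional of the triggering excursion itself; and the common denominator collects $\mu$, the inverse‑local‑time exponent $\sqrt{\tfrac12(\phi(\lambda)+\theta)}$, and the Laplace functional $\tfrac12\int(1-e^{-\lambda s-\theta v})\pi^-(ds,dv)$ of the short down‑excursions.

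\textbf{Main obstacle.} The delicate part is the excursion‑theoretic bookkeeping of Steps 2--3: making rigorous the Poisson point process description of the excursions of the time‑changed process carrying both clocks, isolating the triggering excursion by an independent exponential of the local time, and, above all, matching the law of the value of a ``long'' excursion of the time‑changed drifted Brownian motion — observed at a prescribed real age — with the Bessel‑type representation $u_1$, since the conditioning ``real lifetime $>D$'' entangles the excursion of $\tilde W$ with the independent jumps of $S$ and the $h$‑transform yielding the $\tfrac1{2z}\partial_z$ term must be reconciled with this entanglement. The hypothesis that $\Phi$ vanishes near $x$ is precisely what removes the contributions of arbitrarily short excursions and of the time spent exactly at level $x$, so that only the two regimes above contribute.
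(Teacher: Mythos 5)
Your proposal is correct and follows essentially the same route as the paper: a Girsanov reduction so that the drift appears only through a factor $e^{\frac{\sigma}{2}z-\frac{\sigma^2}{8}s}$ that splits across the excursion decomposition, a Poisson point process description of the bivariate-clock excursions of the time-changed Brownian motion with the trigger located at an exponential local time (yielding $m_1$ and $m_2$), the case split according to whether $T$ falls inside or after the triggering excursion, and the identification of the straddling-excursion marginal with a two-dimensional Bessel process fed into Chen's time-fractional representation to get $u_1$ and $u_2$. The only differences are cosmetic: you postpone the measure change to the end rather than performing it before the excursion analysis, and the term $\sqrt{\tfrac12(\phi(\lambda)+\theta)}$ is the Laplace exponent of the positive-excursion duration pair $(\htu,\tu)$ rather than of the full inverse local time, which is how your final formulas in fact use it.
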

	  \begin{rmk}
	  	The requirement that $\Phi$ is continuous means that this result does \textit{not} give us the prices of the down-and-in Parisian calls or puts. We can recover the price of the Parisian call option with payoff function $(\cdot -K)_{+}$ by a limiting procedure provided that $K\ne x$. We \textbf{are not claiming} there is a corresponding representation in terms of time-fractional PDE, which would require additional work.
	\end{rmk}

	\begin{rmk}
		The requirement that $\Phi$ vanishes near the initial stock price is not the most general statement we can make. It can also be stated that $z\mapsto e^{-\frac{\sigma}{2}{z}} \Phi(xe^{-\sigma z})$ belongs to the domain of the infinitesimal generator of a 2-dimensional Bessel process which can be found in Section 2.7 of \cite{IM.74}.
	\end{rmk}
	  
	 \subsubsection{Ray-Knight type theorems}
	   Theorem \ref{thm:PD} above gives some information on lengths of excursions while it does not give us detailed information on the occupation measures of $X^*$. In Section \ref{sec:occupationChange} we study the effect that the time-change has on the occupation measure. That is, we relate the two occupation measures $\nu(I;-)$ and $\nu^*(I;-)$ for intervals $I\subset \R_+$ where
	   \begin{equation*}
	       \nu(I; A):= \Leb\{t\in I: X(t) \in A\},\qquad \nu^*(I; A):= \Leb\{t\in I: X^*(t)\in A\},
	   \end{equation*} and $\Leb$ denotes the Lebesgue measure. One of the main theorems we prove in Section \ref{sec:occupationChange} is the following:
   	\begin{thm} \label{thm:timeChange1}
		Fix a possibly random time $\tau\in [0,\infty]$ independent of $S$, and let $\tau^* = S(\tau)$ for some strictly increasing subordinator  $S$ (with the convention $S(\infty) = \infty$). Define the increasing processes $A$ and $A^*$ by
		\begin{equation*}
			A(v) = \nu([0,\tau]; (-\infty,v]),\qquad \text{and}\qquad A^*(v) = \nu^*([0,\tau^*];(-\infty,v]).
		\end{equation*} Then
		\begin{equation*}
			\left(A^*(v);v\in\R\right) \overset{d}{=} \left(S(A(v));v\in \R\right).
		\end{equation*}
	\end{thm}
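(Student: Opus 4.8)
\emph{Proof proposal.} The plan is to reduce the statement, by conditioning on the pair $(X,\tau)$, to a deterministic–path fact about subordinators: if $(B_v)_{v\in\R}$ is a nested increasing family of Borel subsets of $\R_+$ and we set $\ell(v):=\Leb(B_v)$, then the random function $v\mapsto\int_{B_v}dS(u)$ (the Lebesgue--Stieltjes measure of $S$ evaluated on $B_v$) has the same law as $v\mapsto S(\ell(v))$. Taking $B_v=\{u\in[0,\tau]:X(u)\le v\}$, for which $\ell(v)=A(v)$, and identifying $\int_{B_v}dS(u)$ with $A^*(v)$ through a change of variables, the theorem follows after integrating out the conditioning.

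First I would record the elementary time-change identity $\{t\ge 0:E(t)\le u\}=[0,S(u)]$, valid for every $u\ge 0$ because $S$ is strictly increasing and right-continuous: from \eqref{eqn:Edef} one has $E(t)\le u\iff S(u)\ge t$. It follows that the image of Lebesgue measure restricted to $[0,S(\tau)]$ under the map $t\mapsto E(t)$ is precisely the Lebesgue--Stieltjes measure $dS$ restricted to $[0,\tau]$, since both assign mass $S(u)$ to every interval $[0,u]$ with $u\le\tau$. Applying the pushforward/change-of-variables formula to the nonnegative integrand $u\mapsto 1_{[X(u)\le v]}$ gives
\[
A^*(v)=\int_0^{S(\tau)} 1_{[X(E(t))\le v]}\,dt=\int_{[0,\tau]} 1_{[X(u)\le v]}\,dS(u)=\int_{B_v} dS(u),
\]
where $B_v:=\{u\in[0,\tau]:X(u)\le v\}$. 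The cases $\tau=\infty$ (so $\tau^*=S(\infty)=\infty$) and infinite occupation times are covered by the same computation with the usual conventions.

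Next I would condition on $(X,\tau)$; under the standing independence of $S$ from $X$ together with the hypothesis $\tau\perp S$, this leaves the law of $S$ unchanged while rendering $(B_v)_v$ a deterministic nested family with $\Leb(B_v)=A(v)$. To prove $(\int_{B_v}dS(u))_v\overset{d}{=}(S(A(v)))_v$ I would compare finite-dimensional distributions: for $v_1<\dots<v_n$ the sets $B_{v_1},B_{v_2}\setminus B_{v_1},\dots,B_{v_n}\setminus B_{v_{n-1}}$ are pairwise disjoint, so the random variables $\int_{B_{v_j}\setminus B_{v_{j-1}}} dS$ are independent and each has the law of $S\big(\Leb(B_{v_j}\setminus B_{v_{j-1}})\big)=S\big(A(v_j)-A(v_{j-1})\big)$ --- which is exactly the joint law of the increments $S(A(v_j))-S(A(v_{j-1}))$ of $S$ run along the deterministic clock $A(\cdot)$. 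The two inputs here, namely that $dS$ assigns independent masses to disjoint Borel sets and that the $dS$-mass of a Borel set $B$ has the law of $S(\Leb(B))$, are the ``homogeneous independently scattered'' features of a subordinator; one proves them by approximating a Borel set by finite unions of intervals, on which they reduce to the stationary independent increments of $S$, and passing to the limit.

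The hard part will not be conceptual but bookkeeping: the limiting arguments of the last step (outer-regular approximation of Borel sets by intervals, and the correct reading $S(\infty)=\infty$ when $\Leb(B_v)=\infty$) require care, as does specifying exactly the $\sigma$-field one conditions on so that $(B_v)_v$ becomes deterministic while $S$ keeps its distribution --- which is why the standing independence of $S$ from $X$ and the hypothesis $\tau\perp S$ must be invoked together. Everything else is the change of variables of the second step, which is routine once the identity $\{t:E(t)\le u\}=[0,S(u)]$ is in place.
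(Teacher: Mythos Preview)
Your proposal is correct and shares the paper's overall strategy: condition on $(X,\tau)$ to reduce to a deterministic path, then use the change-of-variables identity $A^*(v)=\int_{B_v}dS(u)$ with $B_v=\{u\in[0,\tau]:X(u)\le v\}$. The difference lies in how the law of $v\mapsto\int_{B_v}dS(u)$ is identified. The paper invokes the L\'evy--It\^o decomposition $S(t)=\kappa t+\int_0^\infty yM([0,t],dy)$ and applies the mapping theorem for Poisson random measures (Lemma~\ref{lem:mappingTheorem}) to the pushforward $M^*=(w\times\mathrm{id})_\# M$, obtaining $A^*(v)=\kappa A(v)+\int_0^\infty yM^*((-\infty,v],dy)$ and then matching this to $S(A(v))$ via a second PRM $\tilde M$ with the same intensity. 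You instead argue directly that $dS$ is a homogeneous independently scattered random measure --- independence on disjoint Borel sets, with the mass of $B$ distributed as $S(\Leb(B))$ --- and compare finite-dimensional distributions of the increments. Your route is more elementary in that it bypasses the PRM machinery, at the cost of the approximation argument you flag (passing from intervals to Borel sets); the paper's route is cleaner once the mapping theorem is in hand, and it makes the structural reason for the result (the intensity $\nu\otimes\pi$ being invariant under the relevant pushforward) more transparent.
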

	 \begin{rmk}
	 	The proof of Theorem \ref{thm:timeChange1} can be altered to prove an analogous result when the left endpoint of the intervals is not zero. 
	 \end{rmk}
	   
	 The Ray-Knight theorems give us detailed information on the form of the occupation measure for a standard Brownian motion \cite[Section XI.2]{RY.99}. These results have been extended in numerous ways. In particular, Warren \cite{Warren.97} describes the occupation measure of a Brownian motion with negative drift. We apply Theorem \ref{thm:timeChange1} to Warren's result and get Theorem \ref{thm:1d}. There is a similar Ray-Knight description for the occupation measure of a 3-dimensional Bessel process \cite{MY.08} which would allow one to prove a Bessel process version of Theorem \ref{thm:1d}. 
	 
	 \subsection{Overview of Paper}
	 
	 In Section \ref{sec:prelim} we discuss some preliminaries used frequently in the remainder of the paper. This includes a discussion on pricing Parisian options where the stock price is a geometric Brownian motion in Section \ref{sec:parisianPrice} and a separate discussion on the connection between time-fractional equations and their probabilistic representations of solutions in Section \ref{sec:timeFracPDE}.
	 
	 In Section \ref{sec:ExcursionTheorySection} we describe an excursion theory of time-changed Brownian motion. Since the time-change results in the process being non-Markov we do this using 
L\'{e}vy's construction of reflected Brownian motion. We recap the Brownian case in Section \ref{sec:ExcursionsForBM} and then include the time-change in Section \ref{sec:timeChangeExcursions} where we define the excursion measure. We then describe some basic properties of this measure in Section \ref{sec:basicn*}, wherein we prove Theorem \ref{thm:PD}. The remainder of the section is devoted to providing an explicit way to construct the excursion process of a time-changed Brownian motion from that of the original one.
	 
	 We then use this excursion theory to describe the first negative excursion of a time-changed Brownian motion $B^*$ which lasts longer than duration $D$. This is included in Section \ref{sec:firstExcursion}, where we find the Laplace transforms for the two measures $m_j$ of Theorem \ref{thm:priceTheorem}. In Section \ref{sec:illiquidMarkets}, we finish the proof of Theorem \ref{thm:priceTheorem}. Finally, in Section \ref{sec:occupationChange} we prove Theorem \ref{thm:timeChange1} and then use this idea to compute the Laplace transform of integral functionals of time-changed Brownian motion killed at zero in Section \ref{sec:rayKnight}.
	
\section{Preliminaries} \label{sec:prelim}

	\subsection{Poisson Random Measures}
	
	We briefly recall the definition of a Poisson random measure (PRM) and some of their basic properties. We do this in order to describe the PRM construction of subordinators. For more information see \cite[Chapters 1-2]{Kingman.93} and \cite[Chapter 8]{Cinlar.11}.
	
	A PRM with intensity $\mu$ on a measure space $(U,\mathcal{U},\mu)$ is a random discrete measure $N$ on $(U,\mathcal{U})$ such that for any finite collection of disjoint measurable sets $U_1,\cdots,U_n$, the random variables $N(U_1),\dotsm, N(U_n)$ are independent Poisson with means $\E[N(U_j)] = \mu(U_j)$. It is well-known that if $\mu$ is $\sigma$-finite
	then there exists a Poisson random measure with intensity $\mu$. 
	
	One of the first results about PRMs is the so-called mapping theorem which we include below. A proof of the result can be found in \cite[Chapter 2]{Kingman.93} in the case of atomless measures and \cite[Chapter 8]{Cinlar.11} in general.
	\begin{lem}\label{lem:mappingTheorem}
		Suppose that $N$ is a Poisson random measure on a space $(U,\mathcal{U})$ with intensity measure $\mu$. Given another measurable space $(V,\mathcal{V})$, and a measurable map $f:U\to V$, define a new measure on $V$ by $N^{\ast} = f_\#N = N\circ f^{-1}$. Then $N^{\ast}$ is a Poisson random measure on $(V,\mathcal{V})$ with intensity measure $f_\#\mu$. 
	\end{lem}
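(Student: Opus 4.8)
The plan is to verify directly that $N^{\ast} := f_{\#}N$ satisfies the two requirements in the definition of a Poisson random measure with intensity $f_{\#}\mu$, since everything follows by unwinding definitions. The starting observation is that, because $f$ is measurable, $f^{-1}(B) \in \mathcal{U}$ for every $B \in \mathcal{V}$, and by definition of the pushforward both $N^{\ast}(B) = N(f^{-1}(B))$ and $(f_{\#}\mu)(B) = \mu(f^{-1}(B))$. Thus every statement about $N^{\ast}$ on sets of $\mathcal{V}$ translates into a statement about $N$ on their preimages in $\mathcal{U}$.

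First I would argue that $N^{\ast}$ is almost surely a discrete measure: writing $N = \sum_{i} \delta_{X_i}$ for the (countably many) atoms of $N$ --- possible since $N$ is by hypothesis a random discrete measure --- gives $N^{\ast} = \sum_{i} \delta_{f(X_i)}$, again a sum of point masses. (In contrast with the atomless case, $N^{\ast}$ need not be a \emph{simple} point measure, but simplicity is not part of the definition being used here.) Second I would check the Poisson/independence structure: given pairwise disjoint $B_1, \dots, B_n \in \mathcal{V}$, the preimages $f^{-1}(B_1), \dots, f^{-1}(B_n) \in \mathcal{U}$ are pairwise disjoint, so by the defining property of $N$ the variables $N(f^{-1}(B_1)), \dots, N(f^{-1}(B_n))$ are independent and Poisson with means $\mu(f^{-1}(B_j)) = (f_{\#}\mu)(B_j)$; by the identity above this is precisely the assertion that $N^{\ast}(B_1), \dots, N^{\ast}(B_n)$ are independent and Poisson with means $(f_{\#}\mu)(B_j)$. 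Here one uses the usual convention that a Poisson variable with infinite mean equals $+\infty$, so the conclusion remains valid on sets where $f_{\#}\mu$ is infinite.

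The argument is essentially bookkeeping, so there is no serious obstacle; the only points requiring a word of care are the measurability of the preimages $f^{-1}(B)$ and the behaviour on sets of infinite intensity. If one wanted the stronger conclusion that $N^{\ast}$ is a locally finite or $\sigma$-finite random measure, one would have to impose $\sigma$-finiteness of $f_{\#}\mu$ as an extra hypothesis, but the statement as phrased requires nothing of the kind.
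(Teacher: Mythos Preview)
Your argument is correct and is precisely the standard direct verification: disjoint sets in $\mathcal{V}$ pull back to disjoint sets in $\mathcal{U}$, and the Poisson/independence property transfers immediately via $N^{\ast}(B)=N(f^{-1}(B))$. The paper does not actually give its own proof of this lemma; it simply cites \cite[Chapter 2]{Kingman.93} for the atomless case and \cite[Chapter 8]{Cinlar.11} in general, so your write-up goes beyond what the paper does.
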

	
	\subsection{Subordinators}

	Increasing L\'{e}vy processes are called subordinators. Most of the results below are recalled from Chapter III of Bertoin's monograph \cite{Bertoin.96}. A subordinator $S = (S(t);t\ge 0)$ can be uniquely characterized by its Laplace transform, which takes the form
	\begin{equation}	\label{eqn:LaplaceExponent}
		\E[\exp(-\lambda S(t))] = \exp(-t\phi(\lambda)),\qquad \lambda,t\ge 0
	\end{equation} 
	where
	\begin{equation} \label{eqn:phiForLP}
		\phi(\lambda) = \kappa \lambda + \int\limits_{(0,\infty)} (1-e^{-\lambda r})\,\nu(dr),
	\end{equation} where $\kappa \ge 0$ and $\nu$ is a Radon measure on $(0,\infty)$ such that $\int _{(0,\infty)} (1\wedge r)\,\nu(dr)<\infty$. In the sequel we will say that $\phi$ is the Laplace exponent of $S$.
	
	The L\'{e}vy-It\^{o} decomposition of L\'{e}vy processes provides a simple description of subordinators in terms of PRMs. Namely, let $M$ be a PRM on $[0,\infty)\times(0,\infty)$ with intensity measure $\Leb\otimes \pi$, then the process $S$ defined by
	\begin{equation}\label{eqn:PRM_subordinator}
		S(t) = \kappa t + \int\limits_{[0,t]} \int\limits_{(0,\infty)} y\,M(ds,\,dy) = \kappa t + \int\limits_{(0,\infty)} y\,M([0,t],dy)
	\end{equation}
	is, as the notation suggests, a subordinator with Laplace exponent $\phi$ in \eqref{eqn:phiForLP}. See, for example, \cite[Chapter 1]{Bertoin.96} and \cite[Proposition 6.4.6]{Cinlar.11} for a proof. In the sequel we will implicitly use the above decomposition often without reference.

	\subsection{Parisian Options}\label{sec:parisianPrice}
	
	In this section we discuss how to price a Parisian option as discussed in \cite{CJY.97}, when the underlying stock price is governed by a geometric Brownian motion. That is, $X$ is the unique solution to
	\begin{equation}\label{eqn:gmb}
		dX(t) = X(t)\left( \mu \,dt + \sigma d W(t)\right),\qquad X(0) = x>0,
	\end{equation} where $W$ is a standard Brownian motion under the measure $\QQ$ (the reason for this choice of symbol will be apparent later). More concretely, the process $W$ is the canonical Brownian motion on the probability space $\Om = C([0,T]\to \R)$, and $\F_t$ is the standard augmentation of $\sigma(W(s); s\le t)$ under the Wiener measure $\QQ$. For simplicity, we assume that the interest rate $r$ is equal to $0$. This model is arbitrage-free and complete. 
	
	A European option with maturity date $T$ and payoff function $\Phi(X_T)$ is priced by 
	\begin{equation*}
 \E_\Q[ \Phi(X_T)],\qquad \text{where}\ \Q \text{ is the unique risk neutral measure}.
	\end{equation*}
	The measure $\Q$ is the unique measure which turns $X(t)$ into an $(\mathcal F_{t})$-martingale under $\Q$, and is given by Girsanov's theorem \cite[Chapter VIII]{RY.99} by
	\begin{equation*}
	\Q(A) = \int_A \exp\left(-\frac{\mu}{\sigma} W(T) - \frac{\mu^2}{2\sigma^2} T \right) \,d\QQ.
	\end{equation*}
	Under the measure $\Q$, the process $X$ satisfies
	\begin{equation}\label{eqn:XExplicit1}
	X(t) = x \exp\left(\sigma Z(t) - \frac{\sigma^2}{2} t \right),
	\end{equation} where $Z(t) = W(t) + \frac{\mu}{\sigma }t$ is a $\Q$-Brownian motion.

	Fix a barrier $L>0$, a duration $D>0$ and payoff function $\Phi$. The down-and-in European Parisian option pays out $\Phi(X(T))$ but is only valid on the event that the stock price $X$ stays strictly below level $L$ for duration at least $D$. Thus, the price of this down-and-in option, which we denote by $V_{d,i}(x,T,L,D,\Phi)$ is given by
	\begin{equation*}
	V_{d,i}(x,T,L,D,\Phi) = \E_\Q\left[1_{[H_{L,D}^-(X) \le T]}\Phi(X(T)) \right],
	\end{equation*}
	where $H_{L,D}^-$ is defined in \eqref{eqn:HLD_def}. 
	We can write $B(t) =Z(t) - \frac{\sigma}{2}t$, and use Girsanov's theorem to turn $B$ into a Brownian motion under a measure $\PR$ defined by
	\begin{equation*}
\frac{d \Q}{ d\PR} = \exp\left( \frac{\sigma}{2} B(T) - \frac{\sigma^2}{8} T\right) = \exp \left(\frac{\sigma}{2} Z(T) + \frac{\sigma^2}{8}T \right).
	\end{equation*}
	By \eqref{eqn:XExplicit1} $X(t) = x \exp(\sigma B(t))$. It is easy to see that an excursion of $X$ below level $L$ of duration $D$ is equivalent to an excursion of $B$ below level $b = \frac{1}{\sigma} \log (L/x)$ of duration $D$. Thus
	\begin{equation*}
	V_{d,i}(x,T,L,D,\Phi) = e^{-\sigma^2 T/8} \E_{\PR} \left[1_{[H_{b,D}^-(B) \le T]} \exp(\frac{\sigma}{2} B(T))\Phi(xe^{\sigma B(T)})\right].
	\end{equation*}
	The authors of \cite{CJY.97} give explicit formulas for the price of the Parisian call option, i.e. when $\Phi(x) = (x-K)_+$ for some price $K$.

	\subsection{Time-fractional partial differential equations for time-changed diffusions}\label{sec:timeFracPDE}
	
	
		Over the past $50$ years, fractional calculus has been drawn attention by a number of researchers. In addition to the theoretical aspect of its own \cite{SKM.1993}, \cite{K.2006}, its attractiveness is also due to the wide range of applicability in a number of areas including physics, chemistry, signal processing, modeling and control, economics and even social science \cite{MS.19, D.2003, P.1998, T.2019, S.2010}. To see more on the literature of the development of fractional calculus and its applications during 1974-2010, see \cite{M.2011}. 

In particular, fractional calculus has been widely used to model anomalous diffusion in physics. Anomalous diffusion is a diffusion whose individual particle travels at a different rate than in a traditional one. Especially, anomalous subdiffusion describes the phenomena of particle sticking or trapping in an inhomogeneous medium \cite{L.2010, MS.19}. The time-fractional heat equation 
\begin{equation}	\label{e:brownian}
	\partial_{t}^{\alpha} u(x,t) = \Delta u(x,t), 
\end{equation}
where $\partial^{\alpha}$ is the Caputo derivative with order $0<\alpha<1$ defined by 
\begin{equation}	\label{e:Caputo}
	\partial_{t}^{\alpha} f(t) = \frac{1}{\Gamma(1-\alpha)} \frac{d}{dt} \int_{0}^{t} (t-x)^{-\alpha} (f(x)-f(0)) \, dx,
\end{equation}
captures such subdiffusive behavior, just as the classical heat equation $\partial_{t} u(x,t) = \Delta u(x,t)$ describes normal diffusion. Here $\Gamma(x):= \int_{0}^{\infty} t^{x-1}e^{-x}dx$ is the Gamma function. In 2004, Meerschaert and Scheffler \cite[Theorem 5.1]{MS.04} proved, in light of \cite[Theorem 3.1]{BM.01}, that the solution of \eqref{e:brownian} with initial condition $u(x,0) = f(x)$ for some $f\in C^{2}$ such that $f$ and $\Delta f$ vanish at infinity can be represented probabilistically as
\begin{equation}	\label{e:probrep}
	u(x,t) = \mathbb E_{x}[ f(X(E(t)))]
\end{equation}
where $X$ is a scaled Brownian motion in $\mathbb R^d$ starting from $x$ whose infinitesimal generator is $\Delta$ and $E$ is the right-continuous inverse of an $\alpha$-stable subordinator $S$ independent of $X$ defined by \eqref{eqn:Edef}. 
The process $E$ is often called the inverse subordinator or the first hitting time process. In fact, the above-mentioned result still holds when the spatial process $X$ is a strong Markov process with a more general infinitesimal generator $\mathcal L$ in place of the Laplacian $\Delta$ in \eqref{e:brownian}. More recently, Chen \cite[Theorem 2.3]{Chen.17} proved that such interplay between time-fractional PDEs and the probabilistic representation of its solution works even when the time process $S$ is a subordinator satisfying more general conditions. The function $u(x,t)$ in \eqref{e:probrep} satisfies a more general time-fractional differential equation of the form
\begin{equation*}
	\partial_{t}^{S} u(x,t) = \mathcal L u(x,t) \quad \textnormal{with}\ u(x,0)=f(x)
\end{equation*}
where $X$ is a strong Markov process with infinitesimal generator $\mathcal L$ and $S$ is any increasing L\'{e}vy process with drift $\kappa$ and L\'{e}vy measure $\nu$ satisfying $\nu(0,\infty)=\infty$. Here $f$ belongs to the domain of the operator $\mathcal L$. 
The generalized time-fractional derivative above is 
\begin{equation}	\label{eqn:genCaputo}
	\partial_{t}^{S} f(t) = \kappa \frac{d }{d t} f(t) + \frac{d}{dt} \int_{0}^{t} w(t-x)(f(x)-f(0)) \, dx ,\qquad \text{where}\ \ w(x) = \nu(x,\infty).
\end{equation}
This reduces to (a constant multiple of) the classical $\alpha$-th order Caputo derivative \eqref{e:Caputo} with $0<\alpha<1$ when $S$ is an $\alpha$-stable subordinator. Moreover, Chen \cite{Chen.17} shows that this solution is unique in some strong sense. See therein for a detailed formulation. 


\section{The Excursion Theory of Time-changed Brownian motion}\label{sec:ExcursionTheorySection}
	
	Let us consider the following set-up. We have a standard linear Brownian motion $B = (B(t);t\ge 0)$ and we have an independent subordinator $S = (S(t);t\ge 0)$ which is assumed to be strictly increasing. We let $E = (E(t);t\ge 0)$ denote the right-continuous inverse of $S$ defined by \eqref{eqn:Edef}.

	By L\'{e}vy's construction of reflected Brownian motion \cite[Theorem VI.2.3]{RY.99} we know that $R = (R(t);t\ge 0)$ defined by
	\begin{equation*}
		R(t) = B(t) - I(t),\qquad I(t):= \inf_{s\le t} B(s)
	\end{equation*} is a reflected Brownian motion, and its local time at zero is $-I(t)$. We will relate excursions of the time-changed process $R^* = (R^*(t);t\ge 0)$ where $R^*(t) = R(E(t))$ to those of $R$. Similarly, we relate their excursion measures. 
	
	\subsection{The Excursions of a Reflected Brownian motion}\label{sec:ExcursionsForBM}
	Let us begin by recalling the construction of the It\^{o} measure for the reflected Brownian motion $R$. In order to do this we let $(\F_t;t \ge 0)$ denote the standard augmentation of the filtration $\sigma(B(u),S(u); u\le t)$. Now define
	\begin{equation*}
		\tau_x = \inf\{t: -I(t)>x\}.
	\end{equation*} The time $\tau_x$ is the first time when the Brownian motion $B$ is below level $-x$. The times $(\tau_x;x>0)$ index the excursion intervals of the reflected Brownian motion $R$.

	To define these excursions, we let $\EE$ be the set of positive excursions and a cemetery state:
	\begin{equation*}
		\EE = \left\{w\in C(\R_+,\R_+): w(0) = 0, \text{ and } \exists \zeta = \zeta(w)\text{ s.t. } w(t)>0 \text{ iff }t\in(0,\zeta) \right\} \cup\{\partial\}.
	\end{equation*} 
	Here $\partial$ is the cemetery state. This set is a Borel subset of $C(\R_+,\R_+)\cup\{\partial\}$ and when we refer to measurability in $\EE$ we mean with respect to the Borel $\sigma$-algebra as a subset of $C(\R_+,\R_+)$.

	We define the excursions process $(e_x; x>0)$ of $R$ by
	\begin{equation*}
		e_x(t) = \left\{\begin{array}{ll}
		R((t+\tau_{x-})\wedge \tau_x)&: \tau_x- \tau_{x-}>0\\
		\partial &: \text{else}
		\end{array}
		\right..
	\end{equation*}
	It is well known that $(e_x;x>0)$ is a Poisson point process on $\EE$ with respect to the filtration $(\F_{\tau_x};x>0)$. Its intensity measure is the It\^{o} measure (of positive excursions) on $\EE$, which is defined as
	\begin{equation*}
		\n(A):= \frac{1}{x} \E\left[\sum_{0<y\le x} 1_{[e_y\in A]}\right].
	\end{equation*} The scalings of this measure vary throughout the literature depending on conventions of the local time, so we will mention the following identities useful in determining the scaling:
	\begin{equation}\label{eqn:easyPropsforn}
		\n(\zeta>\eps) = \sqrt{\frac{2}{\pi \eps}} ,\qquad\n\left( \sup_{t\le \zeta} w(t)> \eps\right) = \frac{1}{\eps},\qquad \forall \eps>0.
	\end{equation} See \cite{LeGall.10} for a proof with a different scaling; however, see around \eqref{eqn:laplace(t)au} for a proof that \eqref{eqn:easyPropsforn} is the correct scaling.

	\subsection{The time-change}\label{sec:timeChangeExcursions}
	
	In this section we discuss in more detail the time-change $t\mapsto E(t)$ and its relation to the excursions of $R^*$. We let $B^*(t) = B(E(t))$ and $I^*(t) = \inf_{s\le t} B^*(s) = I(E(t))$. Let us write $\F^*_t$ for the usual augmentation of $\sigma(B(E(s)),s\le t)$. 
	
	In order to index the excursions of $R^*$, we need some analog for $\tau_x$. Since $R^*$ is not Markovian, it is difficult to construct a useful local time at zero from just $R^*$. However, by Skorohod reflection, $-I^*(t)$ acts as a local time at level zero in the sense that $-I^*(t)$ only increases on the set $\{t:R^*(t) = 0\}$. Therefore we set $\hat{\tau}_x$ as
	\begin{equation*}
		\hat{\tau}_x = \inf\{t: -I^*(t)>x\}.
	\end{equation*} We observe the following simple relationship between the times $\hat{\tau}_x$ and $\tau_x$ which will be used numerous times in the sequel:
	\begin{equation}\label{eqn:hatTauandS}
		\hat{\tau}_x = \inf\{t:-I(E(t))>x\} = \inf\{S(u): -I(u)>x\} = S({\tau_x}).
	\end{equation} Indeed, the second equality occurs with probability 1. The process $S$ almost surely does not jump on any independent Lebesgue null set. To be more specific, it will not jump on the independent null set $\{t: R(t) = 0\}$. The third equality holds by the right-continuity of the subordinator $S$.
	
	The times $\hat{\tau}_x$ are easily seen to be $\F^*_t$-stopping times. Moreover, $\F^*_{\hat{\tau}_x} = \F_{\tau_x}$. Indeed, since $E = \langle B(E(\cdot))\rangle$ is the quadratic covariation of $B(E(t))$, and so the filtration $\F^*_t$ is the standard augmentation of $\sigma (B(E(s)),E(s); s\le t)$. Since $S(t) = \inf\{u: E(u)>t\}$, $S(t)$ is an $\F^*_t$-stopping time. The claim that $\F^*_{\hat{\tau}_x} = \F_{\tau_x}$ follows from $\F^*_{S(t)} = \F_t$ and \eqref{eqn:hatTauandS}. We now argue that $\F^*_{S(t)} = \F_t$.
	
	First, $\F_{t}\subset \F_{S(t)}^*$. Indeed, for any fixed $r\le t$, $B(r) = B(E(S(r)))$ and $S(r)$ are both $\F_{S(t)}^*$-measurable. Hence, $\sigma(B(r),S(r); r\le t)\subset \F_{S(t)}^*$ and so $\F_t\subset \F_{S(t)}^*$. 
	
	To show the converse, we begin by showing that $\F^*_t \subset \F_{E(t)}$. The latter $\sigma$-algebra is well defined because $E(t)$ is an $(\F_t)$-stopping time for each fixed $t$. Now, for each $r\le t$, $B(E(r))$ is clearly $\F_{E(t)}$-measurable. So $\sigma(B(E(r));r\le t)\subset \F_{E(t)}$ and hence $\F_t^*\subset \F_{E(t)}$. Now as previously noted, $S(t)$ is a stopping time with respect to both filtrations $(\F_t^*)$ and $(\F_{E(t)})$. Hence, $\F^*_{S(t)}\subset \F_{E(S(t))}$. However, since $S$ is strictly increasing, $E(S(t)) = t$ almost surely, and since null sets are contained in $\F_0$ we have $\F^*_{S(t)}\subset \F_{E(S(t))} = \F_t$. Hence $\F_{S(t)}^* = \F_t$ as desired.
	
	

	We now define the following excursion process $(e^*_x;x>0)$ of $R^*$ defined by
	\begin{equation*}
		e^*_x(t) := \left\{ \begin{array}{ll}
			R^*((t+\hat{\tau}_{x-})\wedge \hat{\tau}_x)&: \hat{\tau}_{x}-\hat{\tau}_{x-}>0,\\
			\partial &: \text{else}
			\end{array}
			\right..
	\end{equation*}
 	Since the process $S$ does not jump when $R(t) = 0$, we observe that $e_x^*$ almost surely takes values in the set $\EE$. That is, there is no initial trapping at zero.
 	
 	An important ingredient in the proof that $(e_x;x>0)$ is a Poisson point process is that the process $R$ is a strong Markov process. Unfortunately, the process $R^*$ is not, in general, a Markov process: intervals of constancy for the process $R^*$ can tell us information about the future of the process. In order to overcome this problem, we prove the following proposition.
 	
 	\begin{prop}\label{prop:increments}
 	Fix an $x>0$. Define
 	\begin{equation*}
 		\tilde{R}(t):= R^*(\hat{\tau}_x+t),\qquad t\ge 0.
 	\end{equation*} Then $\tilde{R}\overset{d}{=} R^*$ and is independent of $\F^*_{\hat{\tau}_x} = \F_{\tau_x}$.
 	\end{prop}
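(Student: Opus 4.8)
The plan is to reduce the statement to the strong Markov property of the two–dimensional Lévy process $(B,S)$ at the stopping time $\tau_x$, combined with two elementary ``pathwise'' identities. Writing $\rho(w)(t):=w(t)-\inf_{s\le t}w(s)$ for $w\in C(\R_+,\R)$ and $e_\sigma(t):=\inf\{u:\sigma(u)>t\}$ for an increasing càdlàg path $\sigma$, note that $R=\rho(B)$ and, since $E=e_S$ is a measurable functional of $S$ alone, $R^{*}=\Psi(B,S)$ where $\Psi(w,\sigma)(t):=\rho(w)\big(e_{\sigma}(t)\big)$. The goal is then to exhibit $\tilde R$ as this \emph{same} measurable functional $\Psi$ applied to an independent copy of $(B,S)$ which is independent of $\F_{\tau_x}$; the distributional identity and the independence will then both fall out at once.

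Concretely, I would proceed in the following steps. First, observe that $R(\tau_x)=0$ almost surely: since $-I$ is the local time of $R$ at $0$ and $-I(\tau_x+\eps)>x=-I(\tau_x)$ for every $\eps>0$, the time $\tau_x$ lies in the closed support of $d(-I)$, which is $\{t:R(t)=0\}$. Second, apply the strong Markov property of the $\R\times\R_{+}$–valued Lévy process $(B,S)$ at the almost surely finite $(\F_t)$–stopping time $\tau_x$: the increment process
\begin{equation*}
\big(\tilde B(u),\tilde S(u)\big):=\big(B(\tau_x+u)-B(\tau_x),\,S(\tau_x+u)-S(\tau_x)\big),\qquad u\ge 0,
\end{equation*}
is independent of $\F_{\tau_x}$ and has the same law as $(B,S)$; in particular $\tilde B$ is a Brownian motion and $\tilde S$ an independent copy of $S$. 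Third, using $\hat\tau_x=S(\tau_x)$ from \eqref{eqn:hatTauandS} together with the strict monotonicity and right–continuity of $S$, establish the shift identity $E(\hat\tau_x+t)=\tau_x+\tilde E(t)$ for all $t\ge 0$, where $\tilde E=e_{\tilde S}$; here one checks $S(v)<S(\tau_x)$ for $v<\tau_x$ while $S(\tau_x+w)=S(\tau_x)+\tilde S(w)$, so that $\{v:S(v)>S(\tau_x)+t\}=\{\tau_x+w:\tilde S(w)>t\}$. Fourth, since $R(\tau_x)=0$ (equivalently $B(\tau_x)=I(\tau_x)$), a one–line computation with running infima gives $R(\tau_x+s)=\rho(\tilde B)(s)$. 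Composing,
\begin{equation*}
\tilde R(t)=R^{*}(\hat\tau_x+t)=R\big(E(\hat\tau_x+t)\big)=R\big(\tau_x+\tilde E(t)\big)=\rho(\tilde B)\big(\tilde E(t)\big)=\Psi(\tilde B,\tilde S)(t).
\end{equation*}
Finally, since $\tilde R=\Psi(\tilde B,\tilde S)$ is $\sigma(\tilde B,\tilde S)$–measurable and $(\tilde B,\tilde S)\overset{d}{=}(B,S)$ is independent of $\F_{\tau_x}=\F^{*}_{\hat\tau_x}$, we conclude $\tilde R\overset{d}{=}\Psi(B,S)=R^{*}$ and $\tilde R$ is independent of $\F^{*}_{\hat\tau_x}$.

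The routine ingredients — the strong Markov property for Lévy processes and the reflection identities $R=\rho(B)$, $R(\tau_x+\cdot)=\rho(\tilde B)$ — are standard. The two steps I expect to require genuine care are the shift identity $E(\hat\tau_x+t)=\tau_x+\tilde E(t)$, where strict monotonicity of $S$ (to guarantee $E(S(\tau_x))=\tau_x$ almost surely) and right–continuity of $E$ are essential, and the bookkeeping needed to present $\tilde R$ honestly as $\Psi$ evaluated at the post‑$\tau_x$ data $(\tilde B,\tilde S)$ rather than at $(B,S)$ — it is precisely this identification that upgrades the computation to a statement about laws and independence.
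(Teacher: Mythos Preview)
Your proof is correct and follows essentially the same route as the paper: both arguments hinge on the shift identity $E(\hat\tau_x+t)=\tau_x+\tilde E(t)$ together with the strong Markov property, and then identify $\tilde R(t)$ with $R(\tau_x+\tilde E(t))$. The only difference is packaging: the paper invokes the strong Markov property of $R$ and the independence of $\tau_x$ from $S$ separately, whereas you apply the strong Markov property to the joint L\'evy process $(B,S)$ and make the functional $\Psi$ explicit, which renders the final independence step more transparent.
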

	\begin{proof}
		We begin by noting from \eqref{eqn:hatTauandS} that $\hat{\tau}_x + t = S(\tau_x)+t$. From here observe
		\begin{equation*}
			E(S(\tau_x)+t) = \inf\{u: S(u)>S(\tau_x)+t\} = \inf\{\tau_x+v: S(\tau_x+v)- S(\tau_x)>t\} = \tau_x+ \tilde{E}(t)
		\end{equation*} where $\tilde{E}$ is the right-continuous inverse of $\tilde{S} = (\tilde{S}(v):= S(\tau_x+v)-S(\tau_x);v\ge 0)$. Since $S$ is a L\'{e}vy process it has stationary increments and since $\tau_x$ is an independent random time (it only depends on the Brownian path $B$), the processes $\tilde{S}$ and $S$ have the same law. Hence $\tilde{E}$ and $E$ have the same law. Moreover, both $\tilde{S}$ and $\tilde{E}$ are independent of $\F_{\tau_x}$ since $S$ is strong Markov.
	
		Hence 
		\begin{equation*}
			\tilde{R}(t) = R^*(\hat{\tau}_x+t) = R(E(S(\tau_x)+t)) = R(\tau_x+\tilde{E}(t)).
		\end{equation*} By the strong Markov property for $R$, the right-most term, as a process in $t$, is easily seen to be equal in distribution to $R^*$ and is independent of $\F_{\tau_x}$. This proves the desired claim.
	\end{proof}

	With the above proposition, we can now prove that $(e^*_x;x>0)$ is an $(\F_{\tau_x})_{x} = (\F^*_{\hat{\tau}_x})_x$-Poisson point process.
	\begin{lem}
	$(e^*_x;x>0)$ is an $(\F^*_{\hat{\tau}_x})$-Poisson point process.
	\end{lem}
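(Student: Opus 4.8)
The plan is to follow the classical proof that the excursion process of a reflected Brownian motion is an $(\F_t)$-Poisson point process (the argument is recorded in \cite{LeGall.10}), with Proposition \ref{prop:increments} playing the role of the strong Markov property of $R$ at $\tau_x$. It then suffices to verify, relative to the filtration $(\F^*_{\hat\tau_x})$, that (i) $(e^*_x;x>0)$ is adapted, and (ii) for every fixed $x>0$ the shifted process $(e^*_{x+y};y>0)$ is independent of $\F^*_{\hat\tau_x}$ and has the same law as $(e^*_y;y>0)$. Granted (i) and (ii), the conclusion that $(e^*_x)$ is an $(\F^*_{\hat\tau_x})$-Poisson point process --- with a $\sigma$-finite characteristic measure on $\EE$ read off from any one-dimensional marginal --- is the standard characterization of an $(\F^*_{\hat\tau_x})$-Poisson point process through its homogeneity and independence properties, and the remainder of the argument is verbatim the Brownian one.

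Property (i) is immediate from the definition: $e^*_x$ is a functional of $\hat\tau_{x-}$ and of $(R^*(s);s\le\hat\tau_x)$, both of which are $\F^*_{\hat\tau_x}$-measurable, and joint measurability in $x$ is routine and inherited from $R^*$. For property (ii), fix $x>0$ and set $\tilde R(t)=R^*(\hat\tau_x+t)$ as in Proposition \ref{prop:increments}. The central step is to recognize $(e^*_{x+y};y>0)$ as the excursion process produced by running the construction of Section \ref{sec:timeChangeExcursions} on $\tilde R$. Using $\hat\tau_x=S(\tau_x)$, the continuity of $-I^*$ (so that $-I^*(\hat\tau_x)=x$), and the identity $E(S(\tau_x)+t)=\tau_x+\tilde E(t)$ established inside the proof of Proposition \ref{prop:increments}, one checks that $L(t):=-I^*(\hat\tau_x+t)-x$ is nonnegative, nondecreasing, vanishes at $t=0$, and is exactly the Skorohod reflection term that Section \ref{sec:timeChangeExcursions} attaches to $\tilde R$. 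Hence $\inf\{t:L(t)>y\}=\hat\tau_{x+y}-\hat\tau_x$, and a direct comparison of definitions identifies the level-$y$ excursion of $\tilde R$ with $e^*_{x+y}$ (cemetery cases included).

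To finish (ii) one transfers the law. The proof of Proposition \ref{prop:increments} shows more than $\tilde R\overset{d}{=}R^*$: it realizes $\tilde R$ as a reflected Brownian motion time-changed by an independent inverse subordinator, with underlying data $(\tilde B,\tilde S)$ distributed as $(B,S)$ and independent of $\F_{\tau_x}=\F^*_{\hat\tau_x}$. Consequently the pair $(\tilde R,L)$ is distributed as $(R^*,-I^*)$ and is independent of $\F^*_{\hat\tau_x}$; since the excursion process is a measurable functional of this pair, the preceding paragraph yields $(e^*_{x+y};y>0)\overset{d}{=}(e^*_y;y>0)$ together with independence from $\F^*_{\hat\tau_x}$, as required.

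I expect the bookkeeping of the second paragraph to be the main obstacle: the construction of Section \ref{sec:timeChangeExcursions} is driven by the reflection term $-I^*$, which is a functional of the underlying Brownian path and not of $R^*$ alone, so the shift argument must carry this extra object along. This is precisely why it is cleanest to invoke the \emph{proof} of Proposition \ref{prop:increments}, where the input pair $(\tilde B,\tilde S)$ is explicitly under control, rather than its statement.
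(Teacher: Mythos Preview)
Your proposal is correct and follows essentially the same approach as the paper: both use Proposition~\ref{prop:increments} to identify the shifted excursion process $(e^*_{x+y};y>0)$ with the excursion process of $\tilde R(t)=R^*(\hat\tau_x+t)$, and then read off stationarity and independence from $\F^*_{\hat\tau_x}$. The paper phrases the independence check through the counting processes $N_\Gamma^*(a,b]$ rather than the point process directly, and verifies $\sigma$-discreteness explicitly (via the observation that the time-change preserves excursion maxima), but the substance is the same; if anything, your bookkeeping of the local-time shift $L(t)=-I^*(\hat\tau_x+t)-x$ and your remark that one really needs the \emph{proof} of Proposition~\ref{prop:increments} (to control the pair $(\tilde R,L)$, not just $\tilde R$) are more careful than the paper's somewhat compressed version.
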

	
	\begin{proof}
		We first begin by observing that it is $\sigma$-discrete. Indeed, the time-change does not change the maximum of an excursion, so there are only a finite number excursions of $(R^*(t); 0\le t\le \hat{\tau}_x)$ which exceed level $\eps$ for any $\eps>0$. It is also easy to verify that $(x,\w)\mapsto e_x^*(\w)$ is a measurable with respect to $\mathcal{B}\otimes \F_\infty$, where $\mathcal{B}$ is the Borel $\sigma$-algebra on $\R$.
		
		We must now verify the independence of the increments. We do this by using Proposition \ref{prop:increments}. Towards this end, fix a set measurable set $\Gamma\subset \EE$ and define
		\begin{equation}\label{eqn:N_gammaDef}
			N_\Gamma^* (a,b] = \sum_{a<y\le b} 1_{[e_y^*\in \Gamma]}.
		\end{equation}
		We must show that $N_\Gamma^*(x):= N_{\Gamma}^*(0,x]$ is an $\F_{\tau_\cdot }$-Poisson process. However, by Proposition \ref{prop:increments} it is easy to see
		\begin{equation*}
			\left(N_\Gamma^*(y,y+x]\Big| \F_{\tau_y}\right) \overset{d}{=} N_\Gamma^*(0,x],\qquad \forall x,y>0. 
		\end{equation*} Indeed, the local time of the process $\tilde{R}(t) := R^*(\hat{\tau}_y+t)$ is simply $-\tilde{I}(t)=-I^*(t)-y$. Let $\tilde{\tau}$ denote the right-continuous inverse of $-\tilde{I}$. So the excursions of $\tilde{R}$ occurring before time $\tilde{\tau}_x$ are simply those of $R$ occurring between times $\hat{\tau}_y$ and $\hat{\tau}_{y+x}$. This proves the desired claim.
		
		\end{proof}

		We now define the excursion measure $\n^*$ on $\EE$ by
		\begin{equation*}
			\n^*(\Gamma) = \frac{1}{x} \E\left[N_{\Gamma}^*(x)\right]
		\end{equation*} where $N_\Gamma^*$ is defined in \eqref{eqn:N_gammaDef}.
	
	\subsection{Basic Properties of $\n^*$}\label{sec:basicn*}
	
	We now wish to describe some path properties of functions $w\in \EE$ under the law $\n^*$. We begin by describing the analogues of properties in \eqref{eqn:easyPropsforn} for the measure $\n$.
	
	The supremum is a much easier object to study so we begin with that.
	\begin{lem}
		For each $\eps>0$, $\n^*(\sup_{t\le \zeta} w(t)>\eps) = \frac{1}{\eps}$. 
	\end{lem}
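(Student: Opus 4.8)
The plan is to exploit the fact, already built into the construction, that the time change does not alter the supremum of any excursion, so that the quantity in question is unchanged from the non-time-changed case. The first step is to record the pathwise identity $\sup_{t}e^*_x(t)=\sup_t e_x(t)$ for every $x>0$. By \eqref{eqn:hatTauandS} we have $\hat\tau_x=S(\tau_x)$, hence $\hat\tau_x-\hat\tau_{x-}=S(\tau_x)-S(\tau_{x-})$; since $S$ is strictly increasing this is strictly positive precisely when $\tau_x-\tau_{x-}>0$, so $e^*_x=\partial$ if and only if $e_x=\partial$ and the set of local-time levels carrying a genuine excursion is the same for $R^*$ as for $R$. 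When the excursion at level $x$ is nontrivial, $e^*_x(t)=R\big(E((t+\hat\tau_{x-})\wedge\hat\tau_x)\big)$, so as $t$ ranges over $[0,\infty)$ the argument $E((t+\hat\tau_{x-})\wedge\hat\tau_x)$ ranges over $E\big([S(\tau_{x-}),S(\tau_x)]\big)$. Because $S$ is strictly increasing its inverse $E$ is continuous with $E(S(\tau_{x\pm}))=\tau_{x\pm}$, so by the intermediate value theorem $E$ maps $[S(\tau_{x-}),S(\tau_x)]$ \emph{onto} $[\tau_{x-},\tau_x]$; since $R$ is continuous, taking the supremum of $R$ over this image equals $\sup_{u\in[\tau_{x-},\tau_x]}R(u)=\sup_t e_x(t)$.

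The second step is to feed this into the definition of $\n^*$. Setting $\Gamma=\{w\in\EE:\sup_{t\le\zeta}w(t)>\eps\}$ (note $\partial\notin\Gamma$, so trivial levels contribute $0$ to every count and cause no discrepancy), the first step gives, almost surely and for every $x>0$, that $N^*_\Gamma(x)=\sum_{0<y\le x}1_{[e^*_y\in\Gamma]}$ equals the analogous count $\sum_{0<y\le x}1_{[e_y\in\Gamma]}$ built from the excursion process of $R$ itself. Taking expectations and dividing by $x$, the definition of $\n^*$ and that of $\n$ yield $\n^*(\Gamma)=\n(\Gamma)$, and $\n(\Gamma)=1/\eps$ is exactly the second identity in \eqref{eqn:easyPropsforn}.

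I do not expect a real obstacle: the argument is essentially the observation that suprema of excursions are invariant under the time change. The only point that needs a careful word is the surjectivity (not merely ``into'') of $E$ restricted to $[S(\tau_{x-}),S(\tau_x)]$, which rests on continuity of $E$ and strict monotonicity of $S$, both already established; the measurability of $\Gamma$ and the mild measure-theoretic bookkeeping around the cemetery state $\partial$ are routine.
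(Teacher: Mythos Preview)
Your proposal is correct and follows essentially the same approach as the paper: establish the pathwise identity $\sup_t e_x^*(t)=\sup_t e_x(t)$ (the paper writes this as $\sup_{\tau_{x-}\le t\le \tau_x}R(t)=\sup_{\hat\tau_{x-}\le t\le \hat\tau_x}R^*(t)$ without further comment), then equate the counting processes and invoke \eqref{eqn:easyPropsforn}. Your version simply supplies more of the justification for the supremum identity (surjectivity of $E$ on the excursion interval, handling of $\partial$) than the paper spells out.
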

	\begin{proof}
		For It\^{o}'s measure on positive excursions, we know that $\n(\sup w(t)>\eps) = \frac{1}{\eps}$. 
		We have
		\begin{equation*}\sup_{t}e_x(t) =	\sup_{\tau_{x-} \le t\le \tau_x} (B(t)-I(t)) = \sup_{{\hat{\tau}_{x-}}\le t\le \hat{\tau}_x} (B^*(t)-I^*(t)) = \sup_{t}e_x^*(t).		
		\end{equation*} Hence
	\begin{equation*}
		\n^*(\sup_t w(t)>\eps) = \E\left[ \sum_{0<x\le 1} 1_{[\sup_t e_x^*(t)>\eps]} \right] = \E\left[ \sum_{0<x\le 1} 1_{[\sup_t e_x(t)>\eps]} \right] = \n(\sup_t w(t)>\eps),
	\end{equation*} as desired.
	\end{proof}

	Determining the distribution of the length of the excursions is a little trickier. To begin let us recall a proof of the fact $\n(\zeta>\eps) = \sqrt{\frac{2}{\pi \eps}}$. For each $\eps>0$, set
		\begin{equation*}
		N_\eps(x) = \sum_{0<y\le x} 1_{[\zeta(e_y)>\eps]} = \sum_{0<y\le x} 1_{[\tau_y-\tau_{y-}>\eps]} = \#\{y\in (0,x]: \Delta \tau_{y} >\eps\}.
		\end{equation*} We recall that $\tau_x$ is a 1/2-stable subordinator and we can write its Laplace transform \cite[Theorem 6.2.1]{KS.91} as
		\begin{equation} \label{eqn:laplace(t)au}
		\E[\exp(-\lambda \tau_x)] = \exp(-x\sqrt{2\lambda}) = \exp\left( -x   \int\limits_{(0,\infty)} (e^{-\lambda r} -1 )\,\nu(dr)\right),
		\end{equation} where $\nu$ is the L\'{e}vy measure of $\tau$. Let $M$ be the Poisson random measure on $[0,\infty)\times (0,\infty)$ with intensity $\Leb\otimes \nu$. Then
		\begin{equation*}
		\left(\tau_x; x\ge 0\right) \overset{d}{=} \left(\int\limits_{y= 0}^x \,\int\limits_{z= 0}^\infty z \, M(dy,dz); x\ge 0 \right).
		\end{equation*}
		So, in particular,
		\begin{equation*}
		N_{\eps}(x) = \#\{y\in(0,x]: \Delta\tau_y >\eps\} \overset{d}= M((0,x]\times(\eps,\infty))
		\end{equation*} and hence
		\begin{equation*}
		\n(\zeta>\eps) = \frac{1}{x}\E[N_\eps(x)] = \frac{1}{x} \E[M((0,x]\times (\eps,\infty))] = \nu(\eps,\infty).
		\end{equation*} 
		
		To calculate $\nu(\eps,\infty)$, apply integration by parts to the integral in the right-most term of \eqref{eqn:laplace(t)au} to get
		\begin{equation}\label{eqn:IBPforLevy}
		\sqrt{2\lambda} = \lambda \int_0^\infty e^{-\lambda r}\nu(r,\infty)\,dr.
		\end{equation} Said another way, the function $r\mapsto \nu(r,\infty)$ has Laplace transform $\sqrt{2\lambda}/\lambda = \displaystyle \sqrt{\frac{2}{\lambda}}$. Inverting this Laplace transform gives
		\begin{equation*}
		\nu(\eps,\infty) = \sqrt{\frac{2}{\pi \eps}}.
		\end{equation*}

	One can alter the approach above to find the value of the measure of $\n^*(\zeta>\eps)$. Indeed, note that by \eqref{eqn:hatTauandS} we have
	\begin{equation*}
	\begin{split}
	N_\eps^*(x)&:= \sum_{0<y\le x} 1_{[\zeta(e_y^*)>\eps]} = \#\{y\in(0,x]: \Delta \hat{\tau}_y>\eps\}\\
	&= \#\{y\in(0,x]: S(\tau_y)-S(\tau_{y-})>\eps \}.
	\end{split}
	\end{equation*}
	We now make the following claim. 
	\begin{claim}
	If $S = (S(t);t\ge 0)$ is a subordinator with Laplace exponent $\phi$ and $\tau = (\tau_x;x\ge 0)$ is as above, then $(S(\tau_x);x\ge 0)$ is a subordinator with Laplace transform
	\begin{equation*}
	\E[\exp(-\lambda S(\tau_x))] = \exp(-x \sqrt{2\phi(\lambda)}).
	\end{equation*}
	\end{claim}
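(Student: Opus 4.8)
The plan is to view $(S(\tau_x);x\ge 0)$ as a subordinated subordinator: $\tau=(\tau_x)$ is a $1/2$-stable subordinator independent of $S$, so the composition should again be a subordinator with Laplace exponent obtained by composing Laplace exponents in the usual way (``Bochner subordination''; see \cite[Chapter III]{Bertoin.96}). Concretely I would carry out two steps: first show that $x\mapsto S(\tau_x)$ is a subordinator, and then compute its Laplace transform by conditioning on $\tau_x$.

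For the first step, note that $x\mapsto\tau_x$ and $t\mapsto S(t)$ are both nondecreasing and right-continuous with $\tau_0=S(0)=0$, hence so is $x\mapsto S(\tau_x)$; moreover $\tau_x<\infty$ a.s., and since $S$ is independent of $\tau$ with $S(t)<\infty$ a.s.\ for each fixed $t$, also $S(\tau_x)<\infty$ a.s. To obtain independent and stationary increments, fix $0=x_0<x_1<\dots<x_n$ and condition on the whole path $\tau$: conditionally on $\tau$, the increments $S(\tau_{x_j})-S(\tau_{x_{j-1}})$ are independent (they are increments of $S$ over the disjoint intervals $(\tau_{x_{j-1}},\tau_{x_j}]$) with conditional law equal to that of $S(\tau_{x_j}-\tau_{x_{j-1}})$. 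Integrating out $\tau$ and using that $\tau$ itself has independent stationary increments shows that $(S(\tau_x))$ has independent stationary increments; being nondecreasing, it is a subordinator.

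For the second step, condition on $\tau_x$ and use the independence of $S$ and $\tau$ together with \eqref{eqn:LaplaceExponent}:
\begin{equation*}
\E\big[\exp(-\lambda S(\tau_x))\big]=\E\Big[\E\big[\exp(-\lambda S(\tau_x))\,\big|\,\tau_x\big]\Big]=\E\big[\exp(-\phi(\lambda)\,\tau_x)\big]=\exp\big(-x\sqrt{2\phi(\lambda)}\big),
\end{equation*}
where the last equality applies \eqref{eqn:laplace(t)au} with $\lambda$ replaced by $\phi(\lambda)\ge 0$. This is exactly the claimed identity, and by uniqueness of Laplace transforms it also identifies the Laplace exponent of $(S(\tau_x))$ as $\lambda\mapsto\sqrt{2\phi(\lambda)}$.

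I do not expect a genuine obstacle; the only point needing a little care is the measure-theoretic justification of the conditioning on the path-valued random element $\tau$ in the first step, which is routine given that $S$ and $\tau$ are independent (one can work on a product space and apply Fubini, or invoke the strong Markov property of $S$ at the times $\tau_{x_{j-1}}$, which are stopping times of the filtration of $S$ enlarged by the independent $\sigma$-algebra generated by $\tau$).
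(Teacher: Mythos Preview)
Your proposal is correct and follows essentially the same approach as the paper: the paper simply cites \cite[Theorem 30.1]{Sato99} (Bochner subordination) for the fact that $S(\tau_\cdot)$ is a subordinator and then says ``the computation of the Laplace transform of $S(\tau_\cdot)$ is a simple conditioning argument,'' which is exactly your second step. Your first step just unpacks the cited theorem by hand, so the two arguments coincide.
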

The fact that the process $S(\tau_{\cdot})$ is a subordinator without drift follows from the fact that both $S$ and $\tau$ are L\'{e}vy, see \cite[Theorem 30.1]{Sato99} for instance, and that $\tau$ increases solely by jumps. The computation of the Laplace transform of $S(\tau_{\cdot})$ is a simple conditioning argument. In general, one should not expect a simple representation of the L\'{e}vy measure of such subordinators $S(\tau_\cdot)$. See a representation involving Fourier transforms and fractional convolutions in \cite[Theorem 30.1]{Sato99}. However, it is possible to spell it out explicitly in some cases. See Corollary \ref{cor:betaover2} below. 
	
	We now state the following lemma.
	\begin{lem}\label{lem:LevyMeasureForTCIto}
		Let $\nu^*(dr)$ denote the L\'{e}vy measure of $(S(\tau_x);x\ge 0)$. Then, for each $\eps>0$, we have
		\begin{equation*}
		\n^*(\zeta>\eps) = \nu^*((\eps,\infty)).
		\end{equation*}
	\end{lem}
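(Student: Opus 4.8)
The plan is to mimic, almost verbatim, the computation of $\n(\zeta>\eps)=\nu(\eps,\infty)$ carried out just above, with the $1/2$-stable subordinator $\tau$ replaced by the subordinator $\big(S(\tau_x);x\ge 0\big)$, whose Laplace exponent $\lambda\mapsto\sqrt{2\phi(\lambda)}$ was identified in the Claim. First I would recall the identity recorded immediately before the statement of the lemma,
\[
N_\eps^*(x)=\#\{y\in(0,x]:\ S(\tau_y)-S(\tau_{y-})>\eps\},
\]
and note that $S(\tau_y)-S(\tau_{y-})$ is exactly the jump at time $y$ of the right-continuous process $y\mapsto S(\tau_y)$. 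The identification of its left limit with $S(\tau_{y-})$ uses only that $S$ almost surely does not jump at the independent random times $\tau_{y-}$, which is the same Lebesgue-null-set argument already invoked around \eqref{eqn:hatTauandS}.

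Next I would invoke the L\'evy--It\^o decomposition of the driftless subordinator $\big(S(\tau_x);x\ge 0\big)$ furnished by the Claim: there is a Poisson random measure $M^*$ on $[0,\infty)\times(0,\infty)$ with intensity $\Leb\otimes\nu^*$ whose atoms are precisely the jump times and sizes of $S(\tau_\cdot)$. Consequently $N_\eps^*(x)\overset{d}{=}M^*\big((0,x]\times(\eps,\infty)\big)$, which is a Poisson random variable with mean $x\,\nu^*\big((\eps,\infty)\big)$; this mean is finite because $\int(1\wedge r)\,\nu^*(dr)<\infty$ forces $\nu^*\big((\eps,\infty)\big)<\infty$. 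Taking expectations in the definition $\n^*(\Gamma)=\tfrac1x\E[N_\Gamma^*(x)]$ with $\Gamma=\{w\in\EE:\zeta(w)>\eps\}$ then gives $\n^*(\zeta>\eps)=\nu^*\big((\eps,\infty)\big)$, and the right-hand side is manifestly independent of $x$, as it must be.

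The only step that requires any care is the measure-theoretic bookkeeping in the first paragraph — that $S$ charges neither $\tau_{y-}$ nor $\tau_y$, so that $S\circ\tau$ really is a pure-jump subordinator whose excursion-length indicators are counted by a Poisson random measure. This is routine given the independence of $S$ and $\tau$ together with the fact that $S$ jumps on a Lebesgue-null set, and it is precisely the argument already used around \eqref{eqn:hatTauandS}. Beyond that, the proof is a transcription of the Brownian computation with $\nu$ replaced by $\nu^*$: no new estimate is needed, since all of the substance is carried by the Claim that identifies $S(\tau_\cdot)$ as a subordinator.
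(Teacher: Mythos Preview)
Your proposal is correct and follows essentially the same approach as the paper: the paper's proof simply states that one repeats the argument for $\n(\zeta>\eps)=\nu((\eps,\infty))$ with $N_\eps$ replaced by $N_\eps^*$ and $\tau$ by $S(\tau_\cdot)$, which is exactly what you have spelled out. Your extra remark about identifying $S(\tau_{y-})$ with the left limit of $S(\tau_\cdot)$ at $y$ is a reasonable clarification but not a departure from the intended argument.
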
 
	\begin{proof}
		The proof of this claim is simply a repetition of the argument that $\n(\zeta>\eps) = \nu((\eps,\infty))$ with the appropriate replacements of $N_\eps$ with $N_\eps^*$ and the like. 
	\end{proof}

	\subsubsection{Properties of $\n^*$ when $S$ is $\beta$-stable}

	Let us also focus shortly for the case where $S$ is a $\beta$-stable subordinator for some $\beta\in(0,1)$, i.e. its Laplace exponent is $\phi(\lambda) = c \lambda^\beta$. Then $S(\tau_x)$ is a $\frac{\beta}{2}$-stable subordinator with Laplace exponent 
	\begin{equation*}
		\E\left[ \exp(-\lambda S(\tau_x))\right] = \exp(-x \sqrt{2c} \lambda^{\beta/2}).
	\end{equation*} In particular its L\'{e}vy measure $\nu^*$ can be calculated as in \eqref{eqn:IBPforLevy} by inverting its Laplace transform:
	\begin{equation*}
	    \sqrt{2c} \lambda^{\beta/2} = \lambda \int_0^\infty e^{-\lambda r} \nu^*((r,\infty))\,dr. 
	\end{equation*}
	
		Lemma \ref{lem:LevyMeasureForTCIto} implies the following corollary, which is obtained by just inverting Laplace transform above.
	\begin{cor} \label{cor:betaover2}
		Let $S$ be a $\beta$-stable subordinator with Laplace exponent $\phi(\lambda) = c\lambda^\beta$ for some $\beta\in(0,1)$ and constant $c>0$. Then 
		\begin{equation*}
		\n^*(\zeta>\eps) = \frac{\sqrt{2c}}{\Gamma \left(1-\frac{\beta}{2}\right)} \eps^{-\beta/2}.
		\end{equation*}
	\end{cor}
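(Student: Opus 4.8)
The plan is to combine Lemma~\ref{lem:LevyMeasureForTCIto} with an explicit Laplace inversion, essentially reading off the tail of the relevant L\'evy measure from the Laplace exponent computed just above. By Lemma~\ref{lem:LevyMeasureForTCIto} we have $\n^*(\zeta>\eps) = \nu^*((\eps,\infty))$, so it suffices to identify the tail $r\mapsto \nu^*((r,\infty))$ of the L\'evy measure of the subordinator $(S(\tau_x);x\ge 0)$. By the Claim preceding Lemma~\ref{lem:LevyMeasureForTCIto} together with $\phi(\lambda) = c\lambda^\beta$, this subordinator has Laplace exponent $\lambda\mapsto \sqrt{2\phi(\lambda)} = \sqrt{2c}\,\lambda^{\beta/2}$; in particular it has zero drift, so its Laplace exponent is already in the L\'evy form \eqref{eqn:phiForLP} with $\kappa = 0$.

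Next I would apply the integration-by-parts identity behind \eqref{eqn:IBPforLevy}, valid for any driftless subordinator, to obtain
\begin{equation*}
\sqrt{2c}\,\lambda^{\beta/2} = \lambda \int_0^\infty e^{-\lambda r}\,\nu^*((r,\infty))\,dr,
\end{equation*}
so that $r\mapsto \nu^*((r,\infty))$ has Laplace transform $\sqrt{2c}\,\lambda^{\beta/2-1} = \sqrt{2c}\,\lambda^{-(1-\beta/2)}$.

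Finally I would invert this Laplace transform using the standard identity $\int_0^\infty e^{-\lambda r} r^{a-1}\,dr = \Gamma(a)\lambda^{-a}$, valid for every $a>0$. Taking $a = 1-\tfrac{\beta}{2}$, which lies in $(\tfrac12,1)$ since $\beta\in(0,1)$, yields
\begin{equation*}
\nu^*((r,\infty)) = \frac{\sqrt{2c}}{\Gamma\!\left(1-\tfrac{\beta}{2}\right)}\,r^{-\beta/2},
\end{equation*}
and substituting $r=\eps$ gives the claimed formula. There is no real obstacle here: the only points needing a moment of care are tracking the Gamma-function normalization in the inversion, noting that $1-\tfrac{\beta}{2}\in(0,1)$ so the power $r^{-\beta/2}$ is locally integrable at the origin (as it must be for a genuine L\'evy measure, consistent with $\int(1\wedge r)\,\nu^*(dr)<\infty$), and invoking uniqueness of the inverse Laplace transform among nonincreasing right-continuous functions to conclude that the formula above is the tail function itself and not merely a function with the same transform.
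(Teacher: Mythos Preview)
Your proposal is correct and follows exactly the approach the paper takes: the text immediately preceding the corollary sets up the Laplace exponent $\sqrt{2c}\,\lambda^{\beta/2}$ and the integration-by-parts identity for $\nu^*((r,\infty))$, and the paper then says the corollary ``is obtained by just inverting the Laplace transform above,'' which is precisely the Gamma-function inversion you carry out in detail.
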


	We note that the jumps of the subordinator $S(\tau_x)$ are fairly well studied \cite{PPY.92,PY.92, PY.97} as mentioned in the introduction, with a slightly different formulation. Let $Z\subset \R_+$ denote the closure of the range of the subordinator $S(\tau_x)$. The set $Z$ is of measure zero. For some (possibly random) value $T$, denote by $V_1(T)\ge V_2(T)\ge \dotsm$ the ordered lengths of the intervals of $[0,T]\setminus Z$. The following proposition can be found in \cite{PY.92}.
	\begin{prop} \label{prop:PY92}
	For any $t>0$:
	\begin{equation*}
		\left(\frac{V_1(S(\tau(t)))}{S(\tau(t))}, \frac{V_2(S(\tau(t)))}{S(\tau(t))}, \dotsm \right) \overset{d}{=}\left( \frac{V_1(t)}{t},\frac{V_2(t)}{t},\dotsm\right) \overset{d}{=} \PD\left(\frac{\beta}{2}, 0\right),
	\end{equation*} where $\PD(\alpha,\theta)$ is the Poisson-Dirichlet distribution with parameters $\alpha$ and $\theta$.
	\end{prop}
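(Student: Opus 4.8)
The plan is to deduce the proposition from the Pitman--Yor description of the complementary intervals of the closed range of a stable subordinator \cite{PY.92}, once we record, as in the Claim above (specialized to $\phi(\lambda)=c\lambda^\beta$), that $\sigma := (S(\tau_x);x\ge 0)$ is a driftless $\beta/2$-stable subordinator with Laplace exponent $\sqrt{2c}\,\lambda^{\beta/2}$. Write $Z$ for the closure of the range of $\sigma$. Two standard facts about driftless subordinators will be used: $\R_+\setminus Z$ is exactly the union of the jump intervals $(\sigma(s-),\sigma(s))$ over the countably many jump times $s$ of $\sigma$, with no initial or terminal gap since $\sigma(0)=0$ and $\sigma$ is unkilled; and $\sigma(t) = \sum_{0<s\le t}\Delta\sigma(s)$ for every $t$. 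We also use the self-similarity of $\sigma$, which yields $cZ\overset{d}{=}Z$ for every $c>0$ and, jointly, that the jumps of $\sigma$ on $[0,t]$ together with $\sigma(t)$ have the same law after rescaling as those on $[0,1]$ together with $\sigma(1)$.

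For the leftmost equality I would show directly that both of its sides are $\PD(\beta/2,0)$-distributed. Since a.s. $t$ is not a jump time of $\sigma$, the complementary intervals of $Z$ contained in $[0,\sigma(t)]$ are precisely $\{(\sigma(s-),\sigma(s)):0<s\le t,\ \Delta\sigma(s)>0\}$, so the ranked lengths $V_j(\sigma(t))$ are the ranked jump sizes of $\sigma$ on $(0,t]$, and $\sum_j V_j(\sigma(t)) = \sigma(t)$. Hence $(V_j(\sigma(t))/\sigma(t))_j$ is the sequence of ranked, normalized jumps of the $\beta/2$-stable subordinator $\sigma$ on $[0,t]$; by the self-similarity noted above this has the same law as the ranked normalized jumps on $[0,1]$, which is $\PD(\beta/2,0)$ by the definition recalled in the Introduction. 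For the remaining equality, self-similarity of $Z$ gives $t^{-1}(Z\cap[0,t]) = (t^{-1}Z)\cap[0,1]\overset{d}{=}Z\cap[0,1]$, so that $(V_j(t)/t)_j\overset{d}{=}(V_j(1))_j$; and the assertion that the ranked lengths of the complementary intervals of the range of a stable$(\beta/2)$ subordinator inside $[0,1]$ follow $\PD(\beta/2,0)$ is precisely \cite{PY.92}.

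The substance of the proposition lies entirely in this last input --- the law of the interval partition generated by a \emph{fixed-time} window of the range, as opposed to a window ending at a point of the range --- and I would not reprove it: it is the Pitman--Yor interval-partition theorem, established in \cite{PY.92} by identifying the size-biased permutation of the lengths (equivalently, the law of the interval straddling a fixed, or an independent exponential, time) and recognizing the resulting residual-allocation structure as the one characterizing $\PD(\beta/2,0)$. Everything else is bookkeeping once $S(\tau_\cdot)$ has been identified as a $\beta/2$-stable subordinator, which the preceding Claim supplies.
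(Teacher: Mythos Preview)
Your proposal is correct and takes essentially the same approach as the paper: the paper does not prove this proposition at all but simply records it as ``can be found in \cite{PY.92}'', and you do the same for the substantive (fixed-time) assertion while supplying the straightforward self-similarity bookkeeping for the first equality. Your added exposition correctly isolates that the nontrivial content is the fixed-time statement, with the random-time statement being immediate from the definition of $\PD(\beta/2,0)$ once $S(\tau_\cdot)$ is identified as $\beta/2$-stable.
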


    We now prove Theorem \ref{thm:PD}. 
    \begin{proof}[Proof of Theorem \ref{thm:PD}]
    
    We will maintain the notation used in this section, and let $R$ denote the reflected Brownian motion and $R^{*}$ its time-change by the inverse of a $\beta$-stable subordinator.


The excursion intervals of the process $R^*$ on the interval $[0,t]$ (including the almost-surely unfinished excursion straddling time $t$) are described by
	\begin{equation*}
	\left\{	\left(S(\tau_{x-}), S(\tau_x)\right) : S(\tau_x)\le t\right\} \cup \left\{\left(g^*(t),t\right] \right\},
\quad  \text{where}\quad g*(t) = \sup\{u\le t: R^*(u) = 0\}. 
	\end{equation*}
	Note that the zero set $\{u: R^*(u) = 0\} = \{S(\tau_{x-}), S(\tau_{x}): x\ge 0\}$, which is precisely the closure of the range of $S(\tau_\cdot)$. Therefore, 
	\begin{equation*}
		\{u\in[0,t]: R^*(u)>0\} = [0,t]\setminus Z,
	\end{equation*} 
	where $Z$ is the closure of the range of the $\beta/2$-stable subordinator $S(\tau_\cdot)$. Proposition \ref{prop:PY92} gives the desired result.

    \end{proof}

    Explicit formulas for the distribution of $V_1(t)$ the longest excursion interval can be found in \cite{PY.97}, and are mainly located in Section 3.2 therein. While these are rather clunky, the following corollary can be obtained from Corollaries 3 and 12 in \cite{PY.97}.

	\begin{cor} \label{cor:mainCor} Let $R^*$ be a reflected Brownian motion time-changed by the inverse of stable subordinator with Laplace transform \eqref{eqn:beta_Stable}.
		\begin{enumerate}
			\item Fix $t>\eps>0$. Let $J_\eps(t)$ count the number of excursions of $R^*$ that occur before time $t$ (including the last unfinished excursion) that which are length at least $\eps$. Only count that last excursion if $t-g^*(t)>\eps$. Then
			\begin{equation*}
				\begin{split}
				\E\left[\#J_\eps(t)\right]
				&= \frac{1}{\Gamma(\beta/2)\Gamma(1-\beta/2)}\int_{\eps/t}^1 \frac{(1-u)^{\frac{\beta}{2}-1}}{u^{\frac{\beta}{2}+1}} \: du\\
				&=  \frac{1}{\Gamma(\beta/2)\Gamma(1-\beta/2)} \frac{2}{\beta} \left(\frac{t}{\eps} - 1\right)^{\beta/2}.
				\end{split}
			\end{equation*}
		\item Let $V_1(t)$ be the length of longest (possibly unfinished) excursion interval of $R^*$ on $[0,t]$ then
		\begin{equation*}
			\E\left[ \exp\left( - \lambda t /V_1(t)\right)\right] = \frac{e^{-\lambda} }{\displaystyle \Gamma(1-\beta/2) \lambda^{\beta/2} + \frac{\beta}{2} \int_1^\infty e^{-\lambda r}  r^{-1-\beta/2}\,dr}.
		\end{equation*}
		\end{enumerate}
	\end{cor}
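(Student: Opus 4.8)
The plan is to reduce both parts to Theorem~\ref{thm:PD}, which identifies the vector of normalized excursion lengths of $R^*$ on $[0,t]$ with a $\PD(\beta/2,0)$ element, and then to read off the two conclusions from standard facts about $\PD(\alpha,0)$ with $\alpha=\beta/2$. Throughout I would write $\alpha=\beta/2$, let $(V_1,V_2,\dots)\sim\PD(\alpha,0)$, and keep the notation $U_i(t)$ for the ranked lengths of the relatively open intervals of $\{u\in[0,t]:R^*(u)>0\}$, so that by Theorem~\ref{thm:PD} one has $(U_1(t)/t,U_2(t)/t,\dots)\overset{d}{=}(V_1,V_2,\dots)$.

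For part (1), I would first observe that $\#J_\eps(t)=\#\{i:U_i(t)>\eps\}$: the last, unfinished interval $(g^*(t),t]$ is simply one of the $U_i(t)$, of length $t-g^*(t)$, so the prescription to include it only when $t-g^*(t)>\eps$ is automatic, and replacing ``$\ge\eps$'' by ``$>\eps$'' is harmless since each $U_i(t)$ has a density. Hence $\#J_\eps(t)\overset{d}{=}\#\{i:V_i>\eps/t\}$ and, taking expectations, $\E[\#J_\eps(t)]=\int_{\eps/t}^1\rho_\alpha(u)\,du$, where $\rho_\alpha$ is the density on $(0,1)$ of the mean measure of the point process $\sum_i\delta_{V_i}$. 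The next step is to invoke the explicit form $\rho_\alpha(u)=\tfrac{1}{\Gamma(\alpha)\Gamma(1-\alpha)}u^{-1-\alpha}(1-u)^{\alpha-1}$; this is exactly \cite[Corollary~3]{PY.97}, and it can also be obtained directly by applying Mecke's formula to the Poisson realization of the standard positive $\alpha$-stable subordinator, whose Lévy measure is $\tfrac{\alpha}{\Gamma(1-\alpha)}y^{-1-\alpha}\,dy$, together with the negative moment $\E[\xi^{-\alpha}]=1/\Gamma(1+\alpha)$ of a standard positive $\alpha$-stable variable $\xi$. This gives the first displayed equality; the second is the elementary evaluation $\int_x^1 u^{-1-\alpha}(1-u)^{\alpha-1}\,du=\tfrac1\alpha\big(\tfrac{1-x}{x}\big)^{\alpha}$, obtained from the substitution $u=(1+w)^{-1}$, specialized to $x=\eps/t$.

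For part (2), the longest excursion interval is $V_1(t)=U_1(t)$, so $t/V_1(t)=\big(U_1(t)/t\big)^{-1}\overset{d}{=}1/W_1$ where $W_1:=\max_i V_i$ is the largest coordinate of the $\PD(\alpha,0)$ vector; in particular $\E[\exp(-\lambda t/V_1(t))]=\E[\exp(-\lambda/W_1)]$ does not depend on $t$, consistent with the scale invariance of the zero set $Z$. The stated formula is then precisely the Laplace transform of $1/W_1$ under $\PD(\alpha,0)$, which is \cite[Corollary~12]{PY.97}; as a sanity check, one can rewrite its denominator as $1+\alpha\int_0^1(1-e^{-\lambda y})y^{-1-\alpha}\,dy$ using the identity $\Gamma(1-\alpha)\lambda^\alpha=\alpha\int_0^\infty(1-e^{-\lambda y})y^{-1-\alpha}\,dy$, which in particular gives the value $1$ at $\lambda=0$.

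The mathematics here is light; the only real obstacle is the bookkeeping of conventions. Pitman and Yor parametrize by $\alpha$ (here $\alpha=\beta/2$) and state their results for a generic $\alpha$-stable regenerative set, so one must check --- as already noted after Theorem~\ref{thm:PD} --- that the constant $c$ in \eqref{eqn:beta_Stable}, equivalently the factor $\sqrt{2c}$ appearing in Corollary~\ref{cor:betaover2}, drops out once lengths are normalized by $t$, and one must pin down exactly which formulas in \cite[Section~3.2]{PY.97} are being quoted. Given Theorem~\ref{thm:PD}, both steps are routine.
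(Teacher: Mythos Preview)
Your proposal is correct and follows essentially the same route as the paper: the paper also deduces Corollary~\ref{cor:mainCor} by first invoking Theorem~\ref{thm:PD} to identify the normalized excursion lengths with a $\PD(\beta/2,0)$ sequence and then reading off parts (1) and (2) from Corollaries~3 and~12 of \cite{PY.97}, respectively. Your additional sanity checks (the direct Mecke derivation of $\rho_\alpha$, the substitution for the integral, and the $\lambda=0$ check) are welcome elaborations but do not change the approach.
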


	\subsection{Another representation of $\n^*$}
	
	In this section we present a different construction of the measure $\n^*$ which is useful for coupling the excursions of a Brownian motion $B$ and its time-change $B^*$.
	
 	Let $\M$ denote the space of Radon measures on $[0,\infty)\times (0,\infty)$ and let $\M_c$ denote the subset of measures $\mu$ on $[0,\infty)\times (0,\infty)$ such that 
 	\begin{equation*}
 	\mu(A) = \sum_{j=1}^\infty 1_{[(t_j,x_j)\in A]},
 	\end{equation*}
 	where $t_j$ are distinct elements in $(0,\infty)$ and $x_j\in (0,\infty)$.
 	
 	We will now consider the functions $\Xi:\EE\times \M_c\times [0,\infty) \to \D(\R_+,\R)$ by
 	\begin{equation*}
 	\Xi(w,\mu, \kappa)(t) = w\left( \inf\left\{u: \kappa u+ \int_0^\infty y \mu([0,u]\times dy) >t \right\}\right).
 	\end{equation*} We implicitly assume that either $u\mapsto \int_0^\infty y\mu([0,u],dy)$ is strictly increasing or $\kappa>0$ so that $\Xi(w,\mu,\kappa)\in \EE$.
 	
 	We now prove
 	\begin{thm} \label{thm:excursionChange}
	Suppose that the strictly increasing subordinator $S$ has L\'{e}vy-It\^{o} decomposition \eqref{eqn:PRM_subordinator} where $M$ has intensity measure $\Leb\otimes \nu$. Then
 	\begin{equation*}
 	\n^* = \Xi(\cdot,\cdot, \kappa)_\# (\n\otimes \mathcal{L}(M))
 	\end{equation*}
 	where $\mathcal{L}(M)$ is the law of the random measure $M$.
 	\end{thm}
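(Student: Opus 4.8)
The plan is to exhibit each excursion $e_x^*$ of $R^*$ as an explicit time-change of the corresponding excursion $e_x$ of $R$, the time-change being driven by the piece of the Poisson random measure $M$ sitting over the $x$-th excursion interval, and then to conclude via the marking theorem for Poisson point processes.

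\emph{Setting up the coupling.} Recall that $(e_x;x>0)$ is a PPP with intensity $\n$ and that it determines the excursion intervals of $R$: writing $\zeta_x:=\zeta(e_x)$, the $x$-th completed excursion (when $\Delta\tau_x>0$) occupies $(\tau_{x-},\tau_x]$ with $\tau_x-\tau_{x-}=\zeta_x$ and $R(\tau_{x-})=R(\tau_x)=0$. For each such $x$, let $M_x'$ be the image of $M|_{(\tau_{x-},\tau_x]\times(0,\infty)}$ under $(s,y)\mapsto (s-\tau_{x-},y)$, and set $M_x=M_x'+\bar M_x$, where $(\bar M_x)$ are mutually independent Poisson random measures on $(\zeta_x,\infty)\times(0,\infty)$ of intensity $\Leb\otimes\nu$, independent of everything else (enlarging the probability space if necessary).

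\emph{The pathwise identity $e_x^*=\Xi(e_x,M_x,\kappa)$.} Since $S$ does not jump at $\tau_{x-}$ or at $\tau_x$, both of which lie in the zero set of $R$, equation \eqref{eqn:hatTauandS} gives $\hat\tau_{x-}=S(\tau_{x-})$ and $\hat\tau_x=S(\tau_x)$. Unwinding the definition of $E$ exactly as in the proof of Proposition \ref{prop:increments} yields
\begin{equation*}
E(S(\tau_{x-})+t)=\tau_{x-}+\tilde E(t),\qquad 0\le t<\hat\tau_x-\hat\tau_{x-},
\end{equation*}
where $\tilde E$ is the right-continuous inverse of $\tilde S(v):=S(\tau_{x-}+v)-S(\tau_{x-})$. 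Because $R(\tau_{x-}+v)=e_x(v)$ for $v\le\zeta_x$, this gives $e_x^*(t)=e_x(\tilde E(t))$ on $[0,\hat\tau_x-\hat\tau_{x-})$, while for larger $t$ both sides vanish (using $E(S(\tau_x))=\tau_x$, valid since $S$ is strictly increasing, and $R(\tau_x)=0$). Finally, for $v\le\zeta_x$ the subordinator $\tilde S(v)$ coincides with the clock $\kappa v+\int_0^\infty y\,M_x'([0,v]\times dy)$; since $\Xi(w,\mu,\kappa)$ depends on $\mu$ only through its restriction to $[0,\zeta(w))\times(0,\infty)$, replacing $M_x'$ by $M_x$ leaves the output unchanged. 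The augmentation $\bar M_x$ is included precisely so that $M_x$ inherits the law of $M$ and so that the associated clock is strictly increasing — when $\kappa=0$ it restores the small jumps accumulating beyond $\zeta_x$ — whence $\Xi(e_x,M_x,\kappa)\in\EE$, as the definition requires.

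\emph{The distributional step and conclusion.} Conditionally on $\sigma(e_z;z>0)$ the excursion intervals become deterministic, and $M$, being independent of $B$, remains a Poisson random measure of intensity $\Leb\otimes\nu$; hence, by the restriction property of Poisson random measures and independence over disjoint sets, the measures $M|_{(\tau_{x-},\tau_x]\times(0,\infty)}$ are conditionally independent, and after shifting and adding the independent $\bar M_x$, each $M_x$ is conditionally Poisson of intensity $\Leb\otimes\nu$, i.e. has conditional law $\mathcal{L}(M)$. As this conditional law does not depend on the realization of $(e_z)$, the marking theorem for Poisson point processes (cf.\ \cite{Kingman.93}) shows that $((e_x,M_x);x>0)$ is a PPP on $\EE\times\M_c$ with intensity $\n\otimes\mathcal{L}(M)$. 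Therefore, for measurable $\Gamma\subset\EE$ and $A:=\{(w,\mu):\Xi(w,\mu,\kappa)\in\Gamma\}$ (measurable since $\Xi$ is), the pathwise identity and the definition of $\n^*$ give, for any $x>0$,
\begin{equation*}
\n^*(\Gamma)=\frac1x\,\E\Big[\sum_{0<y\le x}1_{[e_y^*\in\Gamma]}\Big]=\frac1x\,\E\Big[\sum_{0<y\le x}1_{[(e_y,M_y)\in A]}\Big]=(\n\otimes\mathcal{L}(M))(A),
\end{equation*}
which is the asserted identity. I expect the main obstacle to be the pathwise identity: carefully matching the inverse-subordinator time-change inside a single excursion with the clock in the definition of $\Xi$, handling the behaviour at and beyond the excursion's lifetime, and checking that the augmentation $\bar M_x$ is simultaneously immaterial for the value of $\Xi$ and necessary both for getting the law $\mathcal{L}(M)$ and for membership in $\EE$.
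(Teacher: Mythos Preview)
Your proof is correct and takes a genuinely different route from the paper. The paper reduces to conditional measures: since $\Xi$ preserves the supremum and $\n^*(\sup w>\eps)=\n(\sup w>\eps)=1/\eps$, it suffices to match $\n^*(\,\cdot\mid\sup w>\eps)$ with $\Xi_\#\bigl(\n(\,\cdot\mid\sup w>\eps)\otimes\mathcal L(M)\bigr)$ for every $\eps>0$; those conditional probability measures are then realised as the laws of the \emph{first} excursion of $R^*$ (respectively $R$) to exceed level $\eps$, and the pathwise identity is checked only for that one distinguished excursion, where the shifted measure $\tilde M(\cdot)=M(g+\cdot)$ is automatically defined on all of $[0,\infty)$. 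You instead establish the pathwise identity $e_x^*=\Xi(e_x,M_x,\kappa)$ for \emph{every} excursion, augment each local piece $M_x'$ by an independent tail $\bar M_x$ so that the mark has unconditional law $\mathcal L(M)$, and invoke the marking theorem to conclude in one stroke. Your route is more global and in fact anticipates what the paper does in the section immediately following, where precisely this marked PPP $(e_x,M_x,\gamma_x)$ is used to reconstruct $B^*$ and $E$; the paper's route is a touch lighter in that it sidesteps the augmentations $\bar M_x$ and the associated independence bookkeeping. One small remark: the augmentation is really only needed to obtain the correct law $\mathcal L(M)$; membership in $\EE$ would already hold for $\Xi(e_x,M_x',\kappa)$, since the inverse clock reaches $\zeta_x$ at a finite time and $e_x$ vanishes from there on.
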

 	\begin{proof}
 		We first observe that $\n^*(\sup_t w(t)>\eps) = \n(\sup_t w(t)) = \frac{1}{\eps}$ and that the map $\Xi$ does not change the supremum of the of the process. Hence, it suffices to show for each $\eps>0$
 		\begin{equation*}
 		\n^*\left(\ \bullet \ \bigg| \sup_t w(t)>\eps\right) = \Xi(\cdot, \cdot, \kappa)_{\#} \left(\n\left(\ \bullet \ \bigg| \sup_t w(t)>\eps\right)\otimes \mathcal{L}(M)\right).
 		\end{equation*}
 		
 		To prove this we let $e^*(t)$ be the first excursion of $R^*$ which exceeds level $\eps$, and $e(t)$ be the first excursion of $R$ which exceeds level $\eps$. Formally, set $T^* = \inf\{t: R^*(t) > \eps\}$, $g^* = \sup\{r<T^*: R^*(r) = 0\}$ and $d^* = \inf\{r>T^*: R^*(r) = 0\}$ then
 		\begin{equation*}
 		e^*(t) = R^*((g^*+t)\wedge d^*).
 		\end{equation*} Similarly define $T$, $g$ an $d$ in the construction of $e$. Observe that $g^* = S(g)$ by \eqref{eqn:hatTauandS}. By \cite[Proposition XII.1.13]{RY.99} the first hitting time of a set $A$ of a Poisson point process with intensity $\mu$ has distribution $\mu(-|A)$, provided that $\mu(A)<\infty$. Hence both $e^*\sim \n^*(-|\sup_t w(t)>\eps)$ and $e\sim \n(-|\sup_t w(t)>\eps)$. Therefore, it suffices to prove
 		\begin{equation*}
 		e^* \overset{d}{=} \Xi(e,\tilde{M},\kappa)
 		\end{equation*} where $\tilde{M}\overset{d}{=} M$ is independent of $e$.
 		
 	 	Define $\tilde{M}([s,t]\times (a,b)) = M([g + s, g+t]\times (a,b))$. Note that $g$ is independent of $M$ ($g$ depends only on the reflected Brownian motion $R$), and so by conditioning on $g$ it is easy to see that $\tilde{M}$ is a Poisson random measure on $[0,\infty)\times(0,\infty)$ with intensity $\Leb\otimes \nu$. 
 	 	
 	 	Next, for $t\le d^*-g^*$, and as in the proof of Proposition \ref{prop:increments},
 	 	\begin{equation*}
 	 	e^*(t) = R( E( S(g)+ t)) = R(E(S(g)) + \tilde{E}(t))= R (g+\tilde{E}(t)) = e(\tilde{E}(t))
 	 	\end{equation*} where
 	 	$\tilde{E}(t) = \inf\{u: S(u+g)-S(g)>t\}.
 	 	$ Since $g$ is independent of $S$ and $S$ has stationary increments, $\tilde{E}\overset{d}{=} E$.
 	 	
 	 	But, by the L\'{e}vy-It\^{o} decomposition of $S$, we have
 	 	\begin{equation*}
 	 	S(u+g) - S(g) = \kappa u  + \int_0^\infty y \,M([g,g+u],dy) = \kappa u + \int_0^\infty y\tilde{M}([0,u],dy).
 	 	\end{equation*} Hence
 	 	\begin{equation*}
			e^*(t) = e(\tilde{E}(t)) = \Xi\left( e, \tilde{M}, \kappa\right)
 	 	\end{equation*}
 	 	proving the desired claim.
 		
 	\end{proof}
 

	\subsection{Excursion Construction of Time-changed Brownian motion}
	
	We now recall that from the excursion process $e_x^*$ we can reconstruct the time-changed reflected Brownian motion $R^*$ by
	\begin{equation*}
	\begin{split}
	\hat{\tau}_x = \sum_{s\le x} \zeta(e_s^*),\qquad
	-I^*(t) = \inf\{x: \hat{\tau}_x>t\},\qquad 
	R^*(t) = \sum_{s\le -I^*(t)} e_s(t-\hat{\tau}_{s-})
	\end{split}.
	\end{equation*}
	The proof of this result is exactly the same as the Brownian case, which can be found in \cite[Proposition XII.V.2]{RY.99}. 
	
	We also recall that we can construct a standard Brownian motion from the It\^{o} measure $\n$ of \textit{positive} excursions as follows. Let $(e_x,\gamma_x)$ be a Poisson point process taking values in $\EE \times \{\pm 1\}$ with intensity 
	\begin{equation*}
	\n\otimes \left( \frac{1}{2} \delta_{-1} + \frac{1}{2} \delta_{+1}\right).
	\end{equation*} Then the process $B$ constructed by
	\begin{equation}\label{eqn:excursionBM}
	\begin{split}
	\tau_x = \sum_{s\le x} \zeta(e_s),\qquad  
	L(t) = \inf\{x: \tau_x>t\},\qquad 
	B(t) &= \sum_{s\le L(t)} \gamma_s e_s(t-\tau_{s-})
	\end{split}
	\end{equation} is a standard Brownian motion.
	
	But now let $S$ be a strictly increasing subordinator with L\'{e}vy-It\^{o} decomposition \eqref{eqn:PRM_subordinator}. Suppose this is independent of the Poisson point process $(e_x,\gamma_x;x>0)$ and let $E$ be its inverse defined in \eqref{eqn:Edef}. Note that, with probability 1, 
	\begin{equation*}
	M\left(\{\tau_{x-},\tau_x:x\ge 0\} \times (0,\infty)\right) = 0,
	\end{equation*} since $\{\tau_{x-},\tau_x: x\ge 0\}$ is the zero set of the Brownian motion $B$ and has Lebesgue measure zero.
	
	We now work conditionally on $(\tau_x;x\ge 0)$. Now define $\hat{\tau}_x = S(\tau_x)$. Then, for $\tau_{x-}\le E(t)\le \tau_x$, the value of
	\begin{equation*}
	\begin{split}
	E(t) &= \inf\{u: S(u)>t\} = \inf\{u+ \tau_{x-}: S(u+\tau_{x-}) - S(\tau_{x-}) > t-S(\tau_{x-})\} \\
	&= \tau_{x-}+ \inf\{u: S_{x}(u) > t-\hat{\tau}_{x-}\},
	\end{split}
	\end{equation*} 
	where $S_x(u) = S(u+ \tau_{x-})-S(\tau_{x-}) = \kappa u + \int_0^\infty y M([\tau_{x-},u],dy)$. Define, for $a\le b\le \tau_x-\tau_{x-}$
	\begin{equation*}
	M_x([a,b]\times A) = M([\tau_{x-}+a, \tau_{x-}+b]\times A).
	\end{equation*} Conditionally given $\tau$, the measure $M_x$ is a Poisson random measure on $[0,\tau_{x}-\tau_{x-}]\times (0,\infty)$ with intensity $\Leb\otimes \nu$. Moreover, the measures $\{M_x; x\ge 0\}$ are conditionally independent given $(\tau_x)_{x\ge 0}$ because $S$ has independent increments.
	
	But now by Theorem \ref{thm:excursionChange}, $(e_x^*, \gamma_x; x>0)$ where $e_x^*$ is defined by 
	$e_x^* = \Xi\left(e_x,M_x,\kappa\right)$ 
	is a Poisson point process on $\EE\times \{\pm1\}$ with intensity measure $\n^*\otimes\left(\frac{1}{2} \delta_{-1} + \frac{1}{2} \delta_{1} \right)$. Moreover it is not hard to see $e_x^*(t-\hat{\tau}_{x-}) = e_x(E(t) - \tau_{x-}).$
	Indeed, since $\hat{\tau}_{x-} = S(\tau_{x-})$
	\begin{equation*}
	\begin{split}
	\inf&\left\{u: \kappa u + \int_0^\infty y M_{x}([0,u],dy) > t - \hat{\tau}_{x-}\right\}= \inf\left\{u: \kappa (u+\tau_{x-}) + \int_0^\infty y M([0,\tau_{x-}+u],dy)>t \right\}\\
	&= \inf\left\{u: \kappa u + \int_0^\infty y M([0,u],dy) >t \right\} - \tau_{x-}.
	\end{split}
	\end{equation*}
	Also $\zeta(e_x^*) = S_x (\zeta(e_x))$.
	
	Hence
	\begin{equation*}
	B(E(t)) = \sum_{s\le L(E(t))} \gamma_s e_s \left(E(t) - \tau_{s-} \right) = \sum_{s\le L(E(t))} \gamma_s e_s^*(t-\hat{\tau}_{x-}),
	\end{equation*} and
	\begin{equation*}
	L(E(t)) = \inf\{x: \tau_x> E(t)\} = \inf\{x: \hat{\tau}_{x}>t\},
	\end{equation*} where we used the fact that, with probability 1, $S$ does not jump at any of the times $\{\tau_{x-},\tau_x; x\ge 0\}$.
	Hence we have proved the following theorem.
	\begin{thm} \label{thm:pppfortc}
		Let $(e_x^*,\gamma_x)$ be a Poisson point process on $\EE\times \{\pm1\}$ with intensity $\n^*\otimes (\frac{1}{2} \delta_{-1}+\frac{1}{2} \delta_1)$. Then $B^*$ defined by
		\begin{equation} \label{eqn:excurionABM}
		\hat{\tau}_x = \sum_{s\le x} \zeta(e_x^*),\qquad L^*(t) = \inf\{x: \hat{\tau}_x>t\},\qquad B^*(t) = \sum_{s\le L^*(t)} \gamma_s e_s^*(t-\hat{\tau}_{s-})
		\end{equation} is a Brownian motion time-changed by the inverse of an independent strictly increasing subordinator $S$. That is $B^* \overset{d}{=} (B(E(t));t\ge 0)$ for a Brownian motion $B$ and an independent inverse subordinator $E$.
	\end{thm}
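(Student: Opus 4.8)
The plan is to realize the prescribed Poisson point process as the excursion data of a concretely time-changed Brownian motion and then read off the identity. First I would take a Poisson point process $(e_x,\gamma_x;x>0)$ on $\EE\times\{\pm1\}$ with intensity $\n\otimes(\tfrac12\delta_{-1}+\tfrac12\delta_{+1})$ and let $B$ be the standard Brownian motion it builds through \eqref{eqn:excursionBM}, with inverse local time $\tau_x=\sum_{s\le x}\zeta(e_s)$ and local time $L$. Independently I would take a strictly increasing subordinator $S$ with L\'{e}vy--It\^{o} decomposition \eqref{eqn:PRM_subordinator} driven by a PRM $M$ of intensity $\Leb\otimes\nu$, with right-continuous inverse $E$. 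By construction $(B(E(t));t\ge0)$ is a Brownian motion time-changed by an independent inverse subordinator, so it suffices to prove two things: (i) the excursions of $R^*(t):=B(E(t))-I^*(t)$, marked by the signs $\gamma$ of the corresponding excursions of $B$, form a Poisson point process of intensity $\n^*\otimes(\tfrac12\delta_{-1}+\tfrac12\delta_{+1})$; and (ii) feeding that marked point process into the formulas \eqref{eqn:excurionABM} returns the process $(B(E(t));t\ge0)$ on the nose.

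For both parts I would condition on $(\tau_x;x\ge0)$. Since $\{\tau_{x-},\tau_x:x\ge0\}$ is the zero set of $B$, it is Lebesgue-null and a.s.\ carries no atom of $M$, so $M$ is recovered from its restrictions $M_x([a,b]\times A):=M([\tau_{x-}+a,\tau_{x-}+b]\times A)$ to the excursion intervals of $B$; conditionally on $(\tau_x)$ these $M_x$ are independent PRMs, with $M_x$ of intensity $\Leb\otimes\nu$ on $[0,\tau_x-\tau_{x-}]\times(0,\infty)$, because $S$ has stationary independent increments. Then, exactly as in the computation in the display preceding the statement (cf.\ Proposition~\ref{prop:increments}), for $t$ in the $x$-th excursion interval of $R^*$ one has $E(t)=\tau_{x-}+\inf\{u:\kappa u+\int_0^\infty y\,M_x([0,u],dy)>t-\hat{\tau}_{x-}\}$ with $\hat{\tau}_x=S(\tau_x)$; hence the $x$-th reflected excursion of $R^*$ is $e_x^*=\Xi(e_x,M_x,\kappa)$, and moreover $e_x^*(t-\hat{\tau}_{x-})=e_x(E(t)-\tau_{x-})$ while $\zeta(e_x^*)=S(\tau_{x-}+\zeta(e_x))-S(\tau_{x-})=S(\tau_x)-S(\tau_{x-})$.

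To get (i), the key observation is that $\Xi(w,\mu,\kappa)$ depends on $\mu$ only through its restriction to $[0,\zeta(w)]\times(0,\infty)$. Thus attaching $M_x$ to $e_x$ amounts to marking the Poisson point process $(e_x,\gamma_x)$ by an independent PRM on $[0,\zeta(e_x)]\times(0,\infty)$ of intensity $\Leb\otimes\nu$; by the marking theorem for Poisson point processes (see \cite{Kingman.93}) the enlarged process is still Poisson, and applying the measurable map $(w,\mu,\gamma)\mapsto(\Xi(w,\mu,\kappa),\gamma)$ together with Lemma~\ref{lem:mappingTheorem} produces a Poisson point process of intensity $\Xi(\cdot,\cdot,\kappa)_\#(\n\otimes\mathcal L(M))\otimes(\tfrac12\delta_{-1}+\tfrac12\delta_{+1})$, which equals $\n^*\otimes(\tfrac12\delta_{-1}+\tfrac12\delta_{+1})$ by Theorem~\ref{thm:excursionChange}. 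For (ii) I would substitute the identities $\hat{\tau}_x=S(\tau_x)=\sum_{s\le x}\zeta(e_s^*)$, $L(E(t))=\inf\{x:\hat{\tau}_x>t\}$, and $e_x^*(t-\hat{\tau}_{x-})=e_x(E(t)-\tau_{x-})$ into the reconstruction \eqref{eqn:excursionBM} of $B$, which collapses it to $B(E(t))=\sum_{s\le L(E(t))}\gamma_s e_s^*(t-\hat{\tau}_{s-})$, i.e.\ exactly the right-hand formula of \eqref{eqn:excurionABM} with $L^*(t)=L(E(t))$. Since any two Poisson point processes with the same intensity have the same law and the reconstruction map is measurable, (i) and (ii) together give $B^*\overset{d}{=}(B(E(t));t\ge0)$.

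The step I expect to be the main obstacle is (i): Theorem~\ref{thm:excursionChange} is phrased at the level of the intensity of a single excursion, and some care is needed to promote it to a statement about the whole marked point process despite the coupling introduced by conditioning on $(\tau_x)$. The features that make this go through are that $\Xi$ never looks at $M_x$ beyond time $\zeta(e_x)$, that conditionally on the lengths $\zeta(e_x)$ the restricted measures $M_x$ are genuinely independent PRMs, and that the sign marks $\gamma_x$ are untouched, so the construction is an honest instance of the Poisson marking theorem followed by a measurable pushforward; verifying that this pushforward carries the intensity predicted by Theorem~\ref{thm:excursionChange} is then exactly the single-excursion computation already carried out in the proofs of Proposition~\ref{prop:increments} and Theorem~\ref{thm:excursionChange}.
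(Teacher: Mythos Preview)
Your proposal is correct and follows essentially the same route as the paper: build $B$ from $(e_x,\gamma_x)$ via \eqref{eqn:excursionBM}, take an independent $S$, condition on $(\tau_x)$ to split $M$ into the independent pieces $M_x$, set $e_x^*=\Xi(e_x,M_x,\kappa)$, invoke Theorem~\ref{thm:excursionChange} to identify the intensity of $(e_x^*,\gamma_x)$, and then verify the identities $e_x^*(t-\hat{\tau}_{x-})=e_x(E(t)-\tau_{x-})$ and $L(E(t))=\inf\{x:\hat{\tau}_x>t\}$ to collapse the reconstruction to $B(E(t))$. The only cosmetic difference is that you make the Poisson-marking argument for step (i) more explicit than the paper does, which is a welcome clarification rather than a departure.
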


	We observe that $E(t) = \langle B^* \rangle(t)$ is the quadratic covariation of $B^*$. Hence Theorem \ref{thm:pppfortc} allows us to construct the time-change $E$ and, subsequently, the process $S$. In fact, we can use Theorems \ref{thm:excursionChange} and \ref{thm:pppfortc} to obtain the following corollary:
	\begin{cor} \label{cor:EFromExcursion} Let $S$ be a strictly increasing subordinator with L\'{e}vy-It\^{o} decomposition \eqref{eqn:PRM_subordinator}.
	Let $(e_x, M_x, \gamma_x)_{x>0}$ be Poisson point process on $\EE\times \M_c \times \{-1,1\}$ with intensity measure $\n\otimes \mathcal{L}(M)\otimes (\frac{1}{2}\delta_{-1}+ \frac{1}{2}\delta_1)$. If $e_x^* = \Xi(e_x,M_x,\kappa)$ then $B^*$ defined in \eqref{eqn:excurionABM} is the time-change of the Brownian motion defined in \eqref{eqn:excursionBM} with time-change $E$ satisfying
	\begin{equation*}
	E(t) = \sum_{s\le L^*(t) } \langle e_x^*\rangle (t-\hat{\tau}_{s-}).
	\end{equation*}
	
	\end{cor}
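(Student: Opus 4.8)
The plan is to deduce the corollary from Theorems~\ref{thm:excursionChange} and~\ref{thm:pppfortc} by \emph{reconstructing} a subordinator $S$ from the data $(e_x,M_x,\gamma_x)_{x>0}$ and then quoting the constructions in the proofs of those two theorems. First I would build the Brownian motion $B$ from $(e_x,\gamma_x)$ via \eqref{eqn:excursionBM}; this fixes $\tau_x=\sum_{s\le x}\zeta(e_s)$, the local time $L$, and the zero set $Z=\{\tau_{x-},\tau_x:x\ge 0\}$, which has zero Lebesgue measure. I would then glue the $M_x$ onto the excursion intervals of $B$, declaring that the restriction of $M$ to $(\tau_{x-},\tau_x]\times(0,\infty)$ is, for each $x$, the translate by $\tau_{x-}$ in the first coordinate of the restriction of $M_x$ to $(0,\zeta(e_x)]\times(0,\infty)$, and that $M$ places no mass on $Z\times(0,\infty)$. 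Working conditionally on $(e_x,\gamma_x)$ --- equivalently on $(\tau_x)$ and $Z$ --- the $M_x$ are still i.i.d.\ copies of $M$ and independent of that data; by the restriction theorem each translated piece is a $\mathrm{PRM}(\Leb\otimes\nu)$ on $(\tau_{x-},\tau_x]\times(0,\infty)$, by independence of the increments and the superposition theorem their sum is a $\mathrm{PRM}(\Leb\otimes\nu)$ on $([0,\infty)\setminus Z)\times(0,\infty)$, and reinstating the Lebesgue-null set $Z$ does not alter the law. Since this conditional law does not depend on the value of $(e_x,\gamma_x)$, the measure $M$ is an (unconditional) $\mathrm{PRM}(\Leb\otimes\nu)$ independent of $(e_x,\gamma_x)$, hence of $B$; so $S(t):=\kappa t+\int_0^\infty y\,M([0,t]\times dy)$ is a strictly increasing subordinator with L\'evy--It\^o decomposition \eqref{eqn:PRM_subordinator}, independent of $B$. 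Let $E$ be its right-continuous inverse.

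Next I would invoke the construction in the proof of Theorem~\ref{thm:pppfortc} with this $S$, $E$ and $B$. The measures called $M_x$ there are exactly the translates of the restriction of $M$ to the $x$-th excursion interval, which by the gluing above are the restrictions of the given $M_x$ to $(0,\zeta(e_x)]\times(0,\infty)$; and $\Xi(w,\mu,\kappa)$ depends on $\mu$ only through its restriction to $[0,\zeta(w)]\times(0,\infty)$. Hence the excursions $e_x^*=\Xi(e_x,M_x,\kappa)$ here coincide with those produced in that proof, and Theorem~\ref{thm:pppfortc} gives $B(E(t))=\sum_{s\le L^*(t)}\gamma_s e_s^*(t-\hat\tau_{s-})$ together with $L(E(t))=L^*(t)$; the right-hand side is precisely the process $B^*$ of \eqref{eqn:excurionABM}. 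Thus $B^*=(B(E(t));t\ge 0)$, i.e.\ $B^*$ is the time-change by $E$ of the Brownian motion \eqref{eqn:excursionBM}, as claimed.

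Finally I would read off the formula for $E$. From the proof of Theorem~\ref{thm:pppfortc}, whenever $\tau_{x-}\le E(t)\le\tau_x$ --- that is, $x=L^*(t)$ --- one has $E(t)=\tau_{x-}+E_x(t-\hat\tau_{x-})$, where $E_x(r)=\inf\{u:\kappa u+\int_0^\infty y\,M_x([0,u]\times dy)>r\}$. Since $e_x^*=\Xi(e_x,M_x,\kappa)$ is the path $e_x$ run at the inverse clock $E_x$, and $\langle e_x\rangle_u=u$ because $e_x$ is an excursion piece of a Brownian path (and $\gamma_x^2=1$), the pathwise quadratic variation obeys $\langle e_x^*\rangle(r)=\langle e_x\rangle(E_x(r))=E_x(r)$; in particular the total quadratic variation of $e_x^*$ equals $\langle e_x\rangle(\zeta(e_x))=\zeta(e_x)$. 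For $s<x=L^*(t)$ the excursion $e_s^*$ has finished by time $t$, so $\langle e_s^*\rangle(t-\hat\tau_{s-})=\zeta(e_s)$, while $\sum_{s<x}\zeta(e_s)=\tau_{x-}$. Adding the term $s=x$ yields $E(t)=\sum_{s\le L^*(t)}\langle e_s^*\rangle(t-\hat\tau_{s-})$.

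The one genuinely delicate point is the reconstruction of $S$ in the first step: one must confirm that gluing the i.i.d.\ family $(M_x)$ along the random excursion intervals of $B$ produces a bona fide $\mathrm{PRM}(\Leb\otimes\nu)$ that is \emph{independent} of $B$. This rests on the restriction and superposition theorems for Poisson random measures, the translation invariance of Lebesgue measure, and the Lebesgue-nullity of the zero set of $B$; everything afterward is bookkeeping carried over from Theorems~\ref{thm:excursionChange} and~\ref{thm:pppfortc}.
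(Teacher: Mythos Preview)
Your proposal is correct and follows essentially the same route as the paper. The paper does not give a separate proof of the corollary; it is presented as an immediate consequence of the discussion preceding Theorem~\ref{thm:pppfortc} (which goes in the opposite direction, cutting a given $M$ into pieces $M_x$ along excursion intervals) together with the observation $E(t)=\langle B^*\rangle(t)$. What you add is precisely the missing reverse step: the explicit reconstruction of $M$ (hence $S$) by gluing the restrictions $M_x|_{(0,\zeta(e_x)]}$ onto the excursion intervals and verifying via restriction, translation, and superposition that the result is a $\mathrm{PRM}(\Leb\otimes\nu)$ independent of $B$. Your derivation of the formula for $E$ via $\langle e_s^*\rangle(r)=E_s(r)$ and $\sum_{s<x}\zeta(e_s)=\tau_{x-}$ is exactly what the paper's remark $E=\langle B^*\rangle$ amounts to once decomposed excursion by excursion.
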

	\subsection{Some marginal distributions for $\n$ and $\n^*$}
	
We now discuss an important relationship between $\n$ and $\n^*$ which will be useful in the sequel. Before covering this relationship we remark that both $\n$ and $\n^*$ are $\sigma$-finite measures; however, we can talk about processes have $\n$ (or $\n^*$) as its ``law'' by restricting to sets of finite $\n$ (resp. $\n^*$) mass.
 
We describe the ``law'' of $(\zeta(w^*), \langle w^*\rangle (\zeta(w^*)))$ under $\n^*(dw^*)$, where $\langle w^*\rangle$ is the quadratic covariation of the path $w$ which is defined $\n^*$-a.e. 
	Suppose that $S$ has L\'{e}vy-It\^{o} decomposition \eqref{eqn:PRM_subordinator} and is independent of the process $w$ whose ``law'' is $\n$. By Theorem \ref{thm:excursionChange} the ``law'' of $w^*:= \Xi(w,M,\kappa)$ is $\n^*$. 
	
	However, observe that 
	\begin{align}
	\zeta(w^*) &= \inf\{t>0: w^*(t) = 0\}= \inf\{t>0: \Xi(w,M,\kappa)(t) = 0\}\nonumber\\
	&=\inf\{t : \inf\{u:\kappa u + \int_0^\infty y M([0,u],dy) > t\} = \zeta(w)\}\nonumber\\
	&= \kappa \zeta(w) + \int_0^{\infty} y M([0,\zeta(w)],dy) = S(\zeta(w))\label{eqn:zetawStar}
	\end{align} and
	\begin{equation*}
	\langle w^*\rangle (\zeta(w^*)) = \langle w\rangle(\zeta(w)) = \zeta(w).
	\end{equation*}
	Thus,
	\begin{align*}
	\left((\zeta(w^*), \langle w^*\rangle (\zeta(w^*)))\text{  under  } \n^* (dw^*) \right) = \left( \left(S(\zeta(w)), \zeta(w)\right) \text{  under  } \mathcal{L}(S)\otimes \n(dw) \right).
	\end{align*}
	Since the law of $\zeta(w)$ under $\n(dw)$ is know, the above representation can be used to give us an explicit description of the joint law of $(\zeta(w^*), \langle w^*\rangle (\zeta(w^*)))$
	\section{The first negative excursion of duration $D$}\label{sec:firstExcursion}	
	
	Let $(g^*,d^*)$ be the interval corresponding to the first negative excursion of $B^*$ which lasts longer than $D$. In order to price a down-and-in Parisian option, we will need to understand more than just times $g^*$ and $d^*$. In fact, we must understand the joint distribution of $(E(g^*),g^*)$ and the joint distribution of $(E(d^*),d^*)$ where $E$ is the time-change as in \eqref{eqn:Edef}.

Towards this end we will use Theorem \ref{thm:pppfortc} and Corollary \ref{cor:EFromExcursion}. Let $(e_x,M_x,\gamma_x)$ be the Poisson point process defined therein with respect to the filtration $(\GG_x)_{x>0}$, say, and let $e_x^* = \Xi(e_x,M_x,\kappa)$. Now, the first negative excursion of $B^*$ lasting longer than duration $D$ corresponds to the interval $(g^*,d^*)$ defined by
	\begin{equation}\label{eqn:gstarDstar_def}
	g^* = \inf\{\hat{\tau}_{x-}: \gamma_x{e}^*_x(D)<0\}\quad \text{and} \quad d^* = \inf\{\hat{\tau}_{x}:\gamma_x {e}^*_x(D)<0\}.
	\end{equation} Since $\gamma_x e_x^*(D)<0$ if and only if $\gamma_x = -1$ and $\zeta(e_x^*)>D$, we conclude by \eqref{eqn:zetawStar} that 
	\begin{equation*}
		\{x: \gamma_x e_x^*(D)<0\} =  \left\{x: \gamma_x = -1, \kappa \zeta(e_x) + \int_0^\infty y M_x([0,\zeta(e_x)],dy) >D \right\}.
	\end{equation*}
	We now define the following subordinators
	\begin{equation*}
	\begin{split}
		 \hat{\tau}_x &= \sum_{y\le x} \zeta(e_y^*)= \sum_{y\le x} \left\{\kappa \zeta(e_y)+ \int_0^\infty z M_y([0,\zeta(e_y)],dz) \right\} ,\\
		 \htu_x &=  \sum_{y\le x} 1_{[\gamma_y = 1]} \zeta(e_y^*)=\sum_{y\le x} 1_{[\gamma_y = 1]} \left\{\kappa \zeta(e_y)+ \int_0^\infty z M_y([0,\zeta(e_y)],dz) \right\} ,\\
		 \htd_x &=\sum_{y\le x} 1_{[\gamma_y = -1]} \zeta(e_y^*)= \sum_{y\le x}1_{[\gamma_y = -1]} \left\{\kappa \zeta(e_y)+ \int_0^\infty z M_y([0,\zeta(e_y)],dz) \right\},\\
		 \tau_x &= \sum_{y\le x} \zeta(e_y),\\
		 \tu_x &= \sum_{y\le x} 1_{[\gamma_y = 1]} \zeta(e_y),\\
		 \td_x &= \sum_{y\le x} 1_{[\gamma_y = -1]} \zeta(e_y).
	\end{split}
	\end{equation*} In words, the subordinators with $\uparrow$ as a superscript represent positive excursions and the subordinators with $\downarrow$ represent negative excursions of either $B^*$ or $B$.
	Obviously $\tau_x = \tu_x + \td_x$ and $\hat{\tau}_x = \htu_x+\htd_x$. Since $\td_x$ and $\tu_x$ are constructed from the atoms of a PRM on disjoint sets, the processes $\td$ and $\tu$ are independent and similar reasoning implies the independence of $\htd$ and $\htu$. Moreover, since $P(\gamma_x = -1) = P(\gamma_x = 1) = 1/2$, we have $(\tu,\htu)\overset{d}{=} (\td,\htd)$. 
	 To keep track of the negative excursions depending on their durations, we write
	\begin{equation*}
	\begin{split}
	\htds_x &:= \sum_{y\le x} 1_{[\gamma_y = -1, \zeta(e^*_y)\le D]} \zeta(e_y^*), \qquad
	\htdl_x:= \sum_{y\le x} 1_{[\gamma_y = -1, \zeta(e^*_y)>D]} \zeta(e_y^*),\\
	\tds_x &:= \sum_{y\le x} 1_{[\gamma_y = -1, \zeta(e^*_y)\le D]} \zeta(e_y) ,\qquad 
	\tdl_x:= \sum_{y\le x} 1_{[\gamma_y = -1, \zeta(e^*_y)>D]} \zeta(e_y).
\end{split}
	\end{equation*} We remark that in the definitions of $\tds$ and $\tdl$ there is not a typo when considering the durations $\zeta(e_y^*)$ in the indicators: we want to make sure that $\htds$ and $\tds$ jump at the same times. Here the ``$-$'' superscript represent excursions $e^*_y$ of $B^*$ lasting at most duration $D$ and the ``$+$'' superscript represents the excursions lasting longer than duration $D$. Again, since the subordinators are constructed from the behavior of a PRMs on disjoint sets, $(\htdl,\tdl)$, $(\htds,\tds)$, and $(\htu,\tu)$ are three independent $\R^2$-valued L\'{e}vy processes.

	We can compute the joint Laplace transforms of $(\hat{\tau}_x,\tau_x)$ as follows:
	\begin{equation}\label{eqn:ht_t}
		\begin{split}
		\E&\left[ \exp\left\{-\lambda \hat{\tau}_x - \theta \tau_x\right\} \right] = \E\left[\exp\left\{- \left(\sum_{y\le x} (\kappa \lambda + \theta) \zeta(e_y) + \int_0^\infty \lambda z M_y([0,\zeta(e_y)], dz)\right)\right\} \right]\\
		&= \exp\left\{ - x \int_{\EE\times \M_c} \left(1-\exp\{-(\kappa\lambda+\theta) \zeta(w) - \int_0^\infty \lambda z \mu([0,\zeta(w)],dz)\}\right)    \,\n(dw)\PR(M\in d\mu)\right\}\\
		&= \exp\left\{-x \int_\EE \left(1-\exp\left\{ -\zeta(w)(\kappa\lambda +\theta + \int_0^\infty (1-e^{-\lambda z})\,\nu(dz)) \right\} \right) \,\n(dw) \right\}\\
		&= \exp\left\{-x \int_\EE \left(1-e^{-\zeta(w)(\phi(\lambda)+\theta)}\right)\,\n(dw)\right\}\\
		&= \exp\left\{ -x \int_0^\infty (1-e^{-t (\phi(\lambda)+\theta)}) \,\frac{1}{\sqrt{2\pi t^3}}\,dt \right\}\\
		&= \exp\left\{-x \sqrt{\frac{2}{\pi}} (\phi(\lambda)+\theta)\int_0^\infty e^{-(\phi(\lambda)+\theta) t} t^{-1/2}\,dt \right\}\\
		&= \exp\left\{-x \sqrt{2 (\phi(\lambda)+\theta)} \right\}.
		\end{split}
	\end{equation} In the second and third equalities above we used the exponential formulas for PRMs \cite[Section XII.1]{RY.99}. In the fifth equality we used the \eqref{eqn:easyPropsforn}. 
	Observe that the L\'{e}vy measure of the pair $(\hat{\tau},\tau)$ is the measure $\pi(ds,dt)$ which is simply
	\begin{equation*}
	\pi(A\times B) = \int_B \PR(S(t) \in A) \frac{1}{\sqrt{2\pi t^3}} \,dt.
	\end{equation*}
	Indeed, we can instead integrate with respect to $\n(dw)$ first and go from the third item in the string of equalities above to
	\begin{equation*}
	\begin{split}
	\exp&\left\{ -x \E \left[\int_0^\infty \left(1-\exp\left\{-(\kappa \lambda + \theta) t + \int_0^\infty \lambda z M([0,t],dz) \right\} \right) \frac{1}{\sqrt{2\pi t^3}}\,dt \right]\right\}\\
	&= \exp\left\{ -x \E\left[ \int_0^\infty (1-e^{-\lambda S(t) - \theta t}) \frac{1}{\sqrt{2\pi t^3}}\,dt \right]\right\}\\
	&= \exp\left\{ -x \int_{(0,\infty)^2} (1-e^{-\lambda s -\theta t})\,\pi(ds,dt)\right\}.
	\end{split}
	\end{equation*}
	
	Similarly, we see that
	\begin{equation} \label{eqn:htuandtu}
		-\frac{1}{x}\log\E\left[\exp\left\{-\lambda \htu_x - \theta \tu_x\right\} \right] =  \sqrt{\frac{1}{2} (\phi(\lambda)+\theta)} = \int_{(0,\infty)^2} (1-e^{-\lambda s -\theta t}) \frac{1}{2}\pi (ds,dt).
	\end{equation}
	It is also easy to verify that the L\'{e}vy measure of the pure-jump L\'{e}vy process $(\htdl,\tdl)$ is $\frac{1}{2}\pi^+$ with $\pi^+(A\times B) := \pi( (A\cap (D,\infty))\times B)$ and the L\'{e}vy measure of $(\htds,\tds)$ is $\frac{1}{2}\pi^-$ where $\pi^-(A\times B):= \pi( (A\cap (0,D])\times B)$. Indeed, the jumps of $(\htdl,\tdl)$ are simply the jumps of $(\htd,\td)$ for which $\htd_x - \htd_{x-}>D$ and this is precisely what the L\'{e}vy measure $\frac{1}{2}\pi^+$ captures.
	
	Define the time $J = \inf\{x: \tdl_x>0\}$ as the first jump of the subordinator $\tdl$. Since $\tdl$ and $\htdl$ have the same jump times, $J$ is also the first jump time of $\htdl$. The time $J$ is the index $x$ of the first excursion $(e^*_x,\gamma_x)$ such that $\gamma_x = -1$ and $\zeta(e_x^*) >D$. In particular, it is easy to see that $g^* = \hat{\tau}_{J-}$ and $d^* = \hat{\tau}_J$. 
	
	We now state the following lemma for ease of reference in the sequel.
	\begin{lem}\label{lem:independence}
	With the above set-up the following hold:
	\begin{enumerate}
	\item $J$ is a $\GG_x$-stopping time;
	\item $((e_{x+J},M_{x+J}, \gamma_{x+J}); x>0)$ is independent of $\GG_J$ and is equal in law to $((e_x,M_x,\gamma_x);x>0)$;
	\item Both $d^*-g^* = \htdl_J$ and $\tdl_J$ are independent of $J$; 
	\item $J$ is independent of $\tu$, $\tds$, $\htu$ and $\htds$;
	\item $E(g^*) = \tau_{J-} = \tu_J+\tds_J$ and $g^* = \hat{\tau}_{J-} = \htu_J + \htds_J$ are each sums of two independent random variables;
	\item $E(d^*) = \tau_J = \tu_J + \tds_J + \tdl_J$ and $d^* = \hat{\tau}_J = \htu_J + \htds_J + \htdl_{J}$ are each the sum of three independent random variables.
	\end{enumerate}
	\end{lem}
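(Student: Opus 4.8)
The plan is to read everything off the fact that $(e_x,M_x,\gamma_x)_{x>0}$ is an $(\GG_x)$-Poisson point process and that, by \eqref{eqn:zetawStar}, $J$ is exactly the first index $x$ at which the mark $(e_x,M_x,\gamma_x)$ falls into the set
\[
\Gamma:=\Bigl\{(w,\mu,s)\in\EE\times\M_c\times\{\pm1\}:\ s=-1,\ \kappa\zeta(w)+\int_0^\infty z\,\mu([0,\zeta(w)],dz)>D\Bigr\},
\]
whose intensity equals $\tfrac12\,\n^*(\zeta>D)=\tfrac12\mu<\infty$ by Lemma~\ref{lem:LevyMeasureForTCIto}. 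Decompose the point process as the sum of its restriction $N^{\Gamma}$ to marks in $\Gamma$ and its restriction $N^{\Gamma^{c}}$ to marks in $\Gamma^{c}$; these are independent Poisson point processes, and $N^{\Gamma}$ is the time--mark description of a Poisson process of finite rate on $(0,\infty)$. Items (1)--(4) are then standard Poisson point process facts about $N^{\Gamma}$ and $N^{\Gamma^{c}}$, and items (5)--(6) follow by combining these with the identities $E(g^*)=\tau_{J-}$ and $E(d^*)=\tau_{J}$.

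For (1): $\{J\le x\}=\{N^{\Gamma}((0,x]\times\Gamma)\ge 1\}\in\GG_x$, so $J$ is an $(\GG_x)$-stopping time. For (2): this is the regeneration (strong Markov) property of a Poisson point process at the first time its atom enters a set of finite intensity; it follows from the independence of the increments of $x\mapsto N((0,x]\times\,\cdot\,)$ together with the fact that $J$ and the mark $(e_J,M_J,\gamma_J)$ depend only on $N^{\Gamma}$ restricted to $(0,J]$. For (3): since $J$ is the unique index $\le J$ whose mark lies in $\Gamma$, we get $\tdl_J=\zeta(e_J)$ and $\htdl_J=\zeta(e_J^*)=\hat\tau_J-\hat\tau_{J-}=d^*-g^*$; moreover the first point of the finite-rate marked Poisson process $N^{\Gamma}$ has its time $J$ independent of its mark $(e_J,M_J,\gamma_J)$, whence $(\htdl_J,\tdl_J)$, being a deterministic functional of $(e_J,M_J)$, is independent of $J$. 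For (4): $\tu,\htu$ are functionals of the atoms with $\gamma=1$ and $\tds,\htds$ of the atoms with $\gamma=-1$ and $\zeta(e^*)\le D$; all such atoms lie in $\Gamma^{c}$, so $(\tu,\htu,\tds,\htds)$ is a functional of $N^{\Gamma^{c}}$, which is independent of $N^{\Gamma}$, hence of $J$ and of $(\htdl,\tdl)$.

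For (5) and (6) one first identifies $E(g^*)$ and $E(d^*)$. Since $g^*=\hat\tau_{J-}$ and $d^*=\hat\tau_J$, Corollary~\ref{cor:EFromExcursion} gives $E(\hat\tau_{J-})=\sum_{s\le L^*(\hat\tau_{J-})}\langle e_s^*\rangle(\hat\tau_{J-}-\hat\tau_{s-})$; here $L^*(\hat\tau_{J-})=J$, the term $s=J$ is $\langle e_J^*\rangle(0)=0$, and for $s<J$ one has $\hat\tau_{J-}-\hat\tau_{s-}\ge\zeta(e_s^*)$, so $\langle e_s^*\rangle(\hat\tau_{J-}-\hat\tau_{s-})=\langle e_s^*\rangle(\zeta(e_s^*))=\zeta(e_s)$, giving $E(g^*)=\sum_{s<J}\zeta(e_s)=\tau_{J-}$. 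The same computation at $t=\hat\tau_J$ adds the term $\langle e_J^*\rangle(\zeta(e_J^*))=\zeta(e_J)$, so $E(d^*)=\tau_J$. Now decompose $\hat\tau=\htu+\htds+\htdl$ and $\tau=\tu+\tds+\tdl$; since $\htdl$ (equivalently $\tdl$) has no jump before $J$ we have $\htdl_{J-}=\tdl_{J-}=0$, and since $\htu,\htds,\tu,\tds$ have no jump at the index $J$ (whose mark has $\gamma=-1$, $\zeta(e^*)>D$) we get $\htu_{J-}=\htu_J$, $\htds_{J-}=\htds_J$, $\tu_{J-}=\tu_J$, $\tds_{J-}=\tds_J$. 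Hence $(E(g^*),g^*)=(\tu_J,\htu_J)+(\tds_J,\htds_J)$ and $(E(d^*),d^*)=(\tu_J,\htu_J)+(\tds_J,\htds_J)+(\tdl_J,\htdl_J)$; the summands are built from the restrictions of the point process to pairwise disjoint mark sets, and both $(\tu,\htu)$ and $(\tds,\htds)$ are independent of $J$ by (4), while $(\tdl_J,\htdl_J)$ is independent of the pair $(E(g^*),g^*)$ by (3) and (4).

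The main obstacle is the bookkeeping in the last step: getting the boundary index $s=J$ right in Corollary~\ref{cor:EFromExcursion} (so that $E(g^*)=\tau_{J-}$ and not $\tau_J$) and being scrupulous that each independence invoked is between functionals of the point process restricted to disjoint mark sets. I would also flag one point of phrasing that must be handled with care downstream: $(\tu_J,\htu_J)$ and $(\tds_J,\htds_J)$ are two independent $\R^2$-valued L\'evy processes evaluated at the \emph{common} independent time $J$, so they are independent only \emph{conditionally on $J$} (whereas $(\tdl_J,\htdl_J)$ is genuinely independent of $(E(g^*),g^*)$); it is precisely this conditional factorization, obtained by conditioning on $J$, together with \eqref{eqn:htuandtu} and the L\'evy measures $\tfrac12\pi^{\pm}$, that yields the joint Laplace transforms of $(E(g^*),g^*)$ and $(E(d^*),d^*)$.
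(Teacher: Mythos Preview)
Your proof is correct and follows essentially the same route as the paper's: items (1)--(4) via standard Poisson point process facts (the paper invokes \cite[Lemma XII.1.13]{RY.99} for (3) and the disjoint-atom decomposition for (4)), and items (5)--(6) from the decompositions $\tau=\tu+\tds+\tdl$, $\hat\tau=\htu+\htds+\htdl$ together with the observation that $\tu,\tds,\htu,\htds$ do not jump at the index $J$. The only real difference is that for $E(g^*)=\tau_{J-}$ and $E(d^*)=\tau_J$ the paper appeals directly to $\hat\tau_x=S(\tau_x)$ (equation~\eqref{eqn:hatTauandS}) and $E\circ S=\operatorname{id}$ a.s., which is shorter than your route through Corollary~\ref{cor:EFromExcursion}.

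Your closing flag is well taken and worth making explicit: $\tu_J$ and $\tds_J$ (and likewise $\htu_J,\htds_J$) are values of two independent processes at the \emph{same} independent random time $J$, so they are independent only conditionally on $J$, not unconditionally; the lemma's phrasing ``sums of independent random variables'' is therefore slightly loose. This is harmless for the paper's purposes, since the downstream computation \eqref{eqn:jointLT_for_E_gStar} proceeds exactly by conditioning on $J$ and then using the (genuine) independence of $(\htu,\tu)$, $(\htds,\tds)$, and $J$.
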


	\begin{proof}
		For item (1), $J$ is the first time of the $\GG_x$-progressive process $\htdl_x$ enters the open set $(D,\infty)$ and is hence a stopping time.
		
		Item (2) is the strong Markov property for Poisson point processes.
		
		Item (3) follows from Lemma XII.1.13 in \cite{RY.99}.
		
		Item (4) follows from the fact that each of the subordinators listed are independent of $(\htdl,\tdl)$. 
		
		Items (5) and (6) follow from the definitions and \eqref{eqn:hatTauandS}. In item (5) we note that $\tau_{J-} = \tu_{J-} + \tds_{J-} = \tu_J + \tds_J$ because $\tu$ and $\tds$ are independent of $\tdl$ and independent subordinators almost surely do not jump at the same time.
	\end{proof}
	
	\subsection{Distribution of $(E(g^*),g^*)$}

	It is easy to see that $\PR(J>x) = \PR(\htdl_x = 0) = e^{-x \frac{1}{2} \pi^+(\R_+\times\R_+)}$. That is $J$ is an exponential random variable, which we state as the following Lemma:
	\begin{lem}\label{lem:JisExp} 
		The variable $J$ is exponentially distributed with parameter $\mu = \frac{1}{2}\pi^+(\R_+\times \R_+)$. 
	\end{lem}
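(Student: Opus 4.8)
The plan is to exploit that the first coordinate process $\htdl=(\htdl_x;x\ge 0)$ is, by its construction, a subordinator all of whose jumps exceed $D$ and whose L\'evy measure is $\tfrac12\pi^+$; since any subordinator integrates $1\wedge s$ against its L\'evy measure, having all jumps bounded below by $D>0$ forces that L\'evy measure to be \emph{finite}, with total mass $\tfrac12\pi^+(\R_+\times\R_+)=\mu<\infty$ (this finiteness is also among the hypotheses of Theorem~\ref{thm:priceTheorem}). Consequently $\htdl$ is a compound Poisson subordinator, and the statement reduces to the elementary fact that the first jump time of a compound Poisson process of rate $\mu$ is $\operatorname{Exp}(\mu)$.

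Concretely: since $\tdl$ and $\htdl$ increase only by jumps and share their jump times, the definition $J=\inf\{x:\tdl_x>0\}$ gives $\{J>x\}=\{\htdl_x=0\}$ for every $x\ge 0$. But $\{\htdl_x=0\}$ is the event that $\htdl$ has no jump on $[0,x]$, and the number of such jumps is a Poisson random variable with mean $x\cdot\tfrac12\pi^+(\R_+\times\R_+)=x\mu$, as $\tfrac12\pi^+$ is the L\'evy measure of the pure-jump process $(\htdl,\tdl)$ recorded just above the statement. Therefore $\PR(J>x)=e^{-x\mu}$ for all $x\ge 0$, i.e.\ $J\sim\operatorname{Exp}(\mu)$.

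An equivalent route stays at the level of the Poisson point process $(e_x^*,\gamma_x)$ of Theorem~\ref{thm:pppfortc}: the index set $\{x>0:\gamma_x=-1,\ \zeta(e_x^*)>D\}$ is the image, under projection onto the index coordinate, of that point process restricted to $\{(w,\eps):\eps=-1,\ \zeta(w)>D\}$, hence a homogeneous Poisson process on $(0,\infty)$ of rate $\tfrac12\,\n^*(\zeta>D)$; and $\tfrac12\,\n^*(\zeta>D)=\tfrac12\pi^+(\R_+\times\R_+)=\mu$ by \eqref{eqn:zetawStar} together with $\n(\zeta\in dt)=(2\pi t^3)^{-1/2}\,dt$. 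Since $J$ is the first point of this Poisson process, $J\sim\operatorname{Exp}(\mu)$. There is essentially no obstacle here; the only point deserving a sentence is the finiteness of $\mu$, and the rest is the textbook first-jump property of a rate-$\mu$ Poisson process.
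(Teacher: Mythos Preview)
Your proposal is correct and takes essentially the same approach as the paper, which simply records the one-line computation $\PR(J>x)=\PR(\htdl_x=0)=e^{-x\mu}$ immediately before stating the lemma. Your added remarks on the finiteness of $\mu$ and the alternative phrasing via the Poisson point process $(e_x^*,\gamma_x)$ are sound elaborations of the same idea.
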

	
	We now proceed to compute the joint Laplace transform of $(E(g^*),g^*)$ by utilizing Lemmas \ref{lem:independence} and \ref{lem:JisExp}. Indeed, by equation \eqref{eqn:htuandtu} and the description of the L\'{e}vy measure of $(\htds,\tds)$ and letting $\mu$ denote the parameter of the exponential random variable $J$, we get for each $\lambda,\theta\ge 0$:
	\begin{align}
	\E&\left[\exp\{- \lambda g^* - \theta E(g^*) \} \right] = \E\left[\exp\left\{-\lambda (\htu_J+\htds_J) - \theta (\tu_J+ \tds_J) \right\} \right] \nonumber \\
	&= \int_0^\infty {\mu} e^{-\mu x} \E\left[\exp\left\{ -\lambda \htu_x -\theta \tu_x\right\} \right]  \E\left[ \exp\left\{-\lambda \htds_x-\theta \tds_x\right\} \right]\,dx \nonumber\\
	&= \mu  \int_0^\infty e^{-\mu x} e^{-x\sqrt{\frac{1}{2} (\phi(\lambda) + \theta)}} \exp\left\{-x \frac{1}{2} \int_{(0,\infty)^2}(1- e^{-\lambda s - \theta t}) \pi^-(ds,dt) \right\}\,dx	\nonumber \\
	&= \frac{\displaystyle \mu}{ \displaystyle \mu + \sqrt{\frac{1}{2} \phi(\lambda)+\frac{1}{2}\theta} + \frac{1}{2} \int_{(0,\infty)^2} (1-e^{-\lambda s-\theta t})\,\pi^{-}(ds,dt)}. \label{eqn:jointLT_for_E_gStar}
	\end{align}
	This is precisely the Laplace transform of the measure $m_1(ds,dv)$ in Theorem \ref{thm:priceTheorem}.
	
	\subsection{Joint distribution of $(E(d^*),d^*)$}
	
	Before commencing a similar computation to that in \eqref{eqn:jointLT_for_E_gStar}, we observe, by Proposition XII.1.13 in \cite{RY.99} the joint distribution is
	\begin{equation*}
	\PR(\htdl_J\in A, \tdl_J\in B) = \frac{\frac{1}{2}\pi^+(A\times B)}{\frac{1}{2}\pi^+(\R_+\times \R_+)} = \frac{1}{2\mu} \pi^+(A\times B).
	\end{equation*}
	
	Therefore,
	\begin{align}
\nonumber 	\E&\left[\exp\left\{-\lambda d^*-\theta E(d^*)\right\} \right]= \E\left[ \exp\left\{-\lambda (g^*+\htdl_J) - \theta(E(g^*)+ \tdl_J)\right\}\right]\\
\nonumber &= \E\left[ \exp\left\{-\lambda g^*  -\theta E(g^*)\right\}\right] \E\left[\exp\left\{-\lambda \htdl_J - \theta \tdl_J\right\}\right]\\
\nonumber &= \frac{\displaystyle \mu}{ \displaystyle \mu + \sqrt{\frac{1}{2} \phi(\lambda)+\frac{1}{2}\theta} + \frac{1}{2} \int_{(0,\infty)^2} (1-e^{-\lambda s-\theta t})\,\pi^{-}(ds,dt)}  \frac{1}{2\mu}\int_{(0,\infty)^2} e^{-\lambda s- \theta t} \pi^+(ds,dt)\\
&=\frac{\displaystyle \frac{1}{2}\int_{(0,\infty)^2} e^{-\lambda s- \theta t} \pi^+(ds,dt)}{ \displaystyle \mu + \sqrt{\frac{\phi(\lambda)+\theta}{2} } + \frac{1}{2} \int_{(0,\infty)^2} (1-e^{-\lambda s-\theta t})\,\pi^{-}(ds,dt)}  \label{eqn:jointLT_for_E_dstar}.
	\end{align}
This is precisely the Laplace transform of $m_2(ds,dv)$ in Theorem \ref{thm:priceTheorem}.

	\section{A Parisian option in illiquid markets}\label{sec:illiquidMarkets}
	
	As mentioned in the introduction, one attempt to overcome the inability to sell assets for different intervals of time is to introduce a time-change by the inverse of a strictly increasing suboridnator. We consider a geometric Brownian motion, $X$, satisfying \eqref{eqn:gmb} where $W$ is a Brownian motion under a measure $\QQ$ and $S$ is an independent strictly increasing subordinator. The price of the illiquid asset is then modeled by $X^*(t) = X(E(t))$ where $E$ is defined in \eqref{eqn:Edef}. The process $X^*$ can be written as
	\begin{equation*}
	X^*(t) = x \exp\left( \sigma W(E(t)) + (\mu- \frac{\sigma^2}{2})E(t)\right),
	\end{equation*} which is the unique solution to \eqref{eqn:agbm}.
	
	The incorporation of interest into this model is a bit tricky because one cannot sell the stock at anytime (it is illiquid) in order to obtain the risk free bond and so, for simplicity, we assume that the interest rate is zero, i.e. $r = 0$. See \cite{MG.12} for one approach of incorporating non-zero interest rates. As shown in \cite{Magdziarz.09} in the case where $S$ is a stable subordinator and in \cite{MG.12} in general, the market $(\Om, (\F^*_t), \F^*, \QQ)$ with $\Om = C([0,\infty),\R)$, $\F^*_t$ is $\sigma(X^*(s); s\le t)$ completed in the standard way with all the null sets is arbitrage free for finite-time intervals but incomplete.
	
	Fix a deterministic time $T> 0$. A non-unique equivalent martingale measure $\Q$ is given by
	\begin{equation}\label{eqn:dQdPFT}
	\frac{d\Q|_{\F^*_T}}{d \QQ|_{\F^*_T}} = \exp\left\{-\frac{\mu}{\sigma} W(E(T)) - \frac{\mu^2}{2\sigma^2} E(T) \right\}.
	\end{equation} Setting $Z(t) = W(t) + \frac{\mu}{\sigma}t$, the authors of \cite{MG.12} show that $X^*(t)$ and $Z^*(t) = Z(E(t))$ are $\Q$-local martingales (see a similar derivation below). Note that $(S(t);t\ge 0)$ is a family of increasing $\{\F^*_t\}_{t\ge0}$-stopping times. 
	Hence, by the Dambis-Dubins-Schwarz theorem \cite[Theorem V.1.6]{RY.99} the process $Z$ is a Brownian motion in the time-changed filtration $\F^*_{S(t)}$.
	
	The process $X^*(t) = x \exp\left\{ \sigma ( Z(E(t)) +  \frac{\sigma}{2} E(t) ) \right\}$ is a time-changed geometric Brownian motion under $\Q$. This follows from $Z$ being independent of $S$. Indeed, consider two random variables $F = F(Z(u);u\in[0,T])$ and $G = G(S(u);u\in[0,T])$ which are both $\F^*_{S(T)}$-measurable. Moreover, by Proposition VIII.1.3 in \cite{RY.99}, we have
	\begin{equation*}
	\begin{split}
	\Q(A\cap \{S(T) <\infty\}) &= \int_{A\cap \{S(T)<\infty\}} \exp\left\{ -\frac{\mu}{\sigma} Z\left(E (S(T))\right) + \frac{\mu^2}{2\sigma^2} E(S(T))\right\} \,d\QQ\\
	&= \int_A \exp\left\{-\frac{\mu}{\sigma} Z(T) + \frac{\mu^2}{\sigma^2} T\right\}\,d\QQ
	\end{split}
	\end{equation*} since $S(T)<\infty$ and $E(S(T)) = T$ hold $\QQ$ almost surely.
	 Thus we have
	\begin{equation*}
	\begin{split}
	\E_{\Q}[F G] &= \E_{\QQ} \left[ e^{-\frac{\mu}{\sigma} Z(T)+  \frac{\mu^2}{2\sigma^2}T} FG\right]\\
	&= \E_{\QQ}\left[  e^{-\frac{\mu}{\sigma} Z(T)+  \frac{\mu^2}{2\sigma^2}T} F \right] \E_{\QQ} [G]\\
	&= \E_{\Q}[F] \E_{\QQ}[G]\\
	&= \E_{\Q}[F] \E_{\QQ}[e^{-\frac{\mu}{\sigma} Z(T) + \frac{\mu^2}{2\sigma^2} T} G]\\
	&= \E_{\Q}[F] \E_{\Q}[G],
	\end{split}
	\end{equation*} where in the penultimate step we used the independence (under $\QQ$) of $Z$ and $S$ and the fact that 
	\begin{equation*}
	\E_{\QQ}[\exp(-\frac{\mu}{\sigma} Z(T))] = \E_{\QQ}[\exp(-\frac{\mu}{\sigma} W(T) - \frac{\mu^2}{\sigma^2} T)] = \exp( -\frac{1}{2} \frac{\mu^2}{\sigma^2} T).
	\end{equation*} Setting $F = 1$ and $G = \exp(-\lambda S(t))$, we can see that $S$ is a subordinator under $\Q$ with the same Laplace exponent.
	
	Now it is easy to see that the price of the down-and-in European Parisian option  with barrier $L$, duration $D$, maturation $T$ and payoff $\Phi$ in this illiquid market is indeed given by equation \eqref{eqn:CdiForTimeChange}. We will now do what was outlined in Section \ref{sec:parisianPrice} which did not involve a time-change. That is, we change measure to a measure $\PR$ under which $B(t) = Z(t)+\frac{\sigma}{2} t$ is a Brownian motion in the filtration $(\F^*_{S(t)})$.
	
	\subsection{The measure $\PR$}
	
	Define the measure $\PR$ on $\F^*_t$ by, for $A\in \F^*_t$,
	\begin{equation}\label{eqn:dQQdQ}
	\PR(A) = \int_A \exp\left\{ - \frac{\sigma}{2} Z(E(t)) - \frac{\sigma^2}{8} E(t)\right\} \,d\Q.
	\end{equation}
	First, since $Z$ is independent of $E$ under $\Q$ and $Z$ is Brownian motion under $\Q$, the measure $\PR$ is a probability measure. Indeed for any $\lambda\in \R$, disintegration gives
	\begin{equation*}
	\begin{split}
	\E_{\Q}\left[ \exp\left\{\lambda Z(E(t)) - \frac{\lambda^2}{2} E(t)\right\} \right] &= \int_0^\infty \E_\Q[\exp(\lambda Z(u)- \frac{\lambda^2}{2}u)] \,\Q(E(t)\in du)\\
	& = \int_0^\infty 1 \Q(E(t)\in du) = 1.
	\end{split}
	\end{equation*}
	
	Now, the Girsanov-Meyer theorem \cite[Chapter III, Theorem 40]{Protter.04} implies that
	\begin{equation*}
	Z(E(t)) - \left\langle Z(E(\cdot)), -\frac{\sigma}{2} Z(E(\cdot))\right\rangle(t) = Z(E(t))+ \frac{\sigma}{2}E(t)
	\end{equation*} is a $\PR$-local martingale. The Dambins-Dubins-Schwarz theorem \cite[Theorem V.1.6]{RY.99} then implies that $B = (B(t):=Z(t) + \frac{\sigma}{2} t; t\ge 0)$ is a $(\F^*_{S(t)})$-Brownian motion under $\PR$. A similar proof to that showing $Z$ is independent of $S$ under $\Q$ can be used to prove that $B$ is independent of the suborindator $S$ under $\PR$.
	
	We summarize this in the following lemma:
	\begin{lem} \label{lem:equivalentMeasures}
	
	With the above set-up the following hold.
	
	\begin{enumerate}
		\item $S = (S(t);t\ge 0)$ is an increasing family of $(\F^*_t)$-stopping times, and $W(t) = W(E(S(t)))$ is an $(\F^*_{S(t)})_{t\ge0}$ Brownian motion under $\QQ$.
		\item The measure $\Q$ defined by \eqref{eqn:dQdPFT} is equivalent to $\QQ$ on $\F^*_T$; i.e. it is locally equivalent to $\QQ$. Moreover:
		\begin{enumerate}
			\item The process $X^*$ solving \eqref{eqn:agbm} is an $\F^*_t$-local martingale under $\Q$;
			\item The process $Z(t) = W(t)+\frac{\mu}{\sigma} t$ is an $(\F^*_{S(t)})_{t\ge0}$-adapted process, which is a Brownian motion under $\Q$; 
			\item $Z^*(t) = Z(E(t))$ is an $\F^*_t$-local martingale under $\Q$; and
			\item The law of $S$ under $\Q$ is the same as the law of $S$ under $\QQ$ and it is independent of $Z$.
		\end{enumerate}
		\item The measure $\PR$ defined by \eqref{eqn:dQQdQ} is equivalent to $\Q$ on $\F^*_T$, that is it is locally equivalent to $\Q$. Moreover:
		\begin{enumerate}
			\item The process $B(t) = Z(t)+\frac{\sigma}{2} t$ is an $(\F^*_{S(t)})_{t\ge0}$-adapted process, which is a Brownian motion under $\PR$; 
			\item $B^*(t) = B(E(t))$ is an $\F^*_t$-local martingale under $\PR$; and
			\item The law of $S$ under $\PR$ is the same as the law of $S$ under $\QQ$ and it is independent of $B$.
		\end{enumerate}
	\end{enumerate}
	\end{lem}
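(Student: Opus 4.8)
The plan is to assemble Lemma~\ref{lem:equivalentMeasures} from the computations already carried out in the paragraphs preceding its statement, using two classical tools — the Dambis--Dubins--Schwarz (DDS) theorem and Girsanov's theorem — together with one analytic device: conditioning on the independent subordinator $S$. I would begin with the measurability bookkeeping. Since $\langle\log X^*\rangle(t) = \sigma^2 E(t)$, the clock $E(t)$, and hence $W(E(t))$, is $\F^*_t$-measurable, so exactly as in Section~\ref{sec:timeChangeExcursions} each $S(t) = \inf\{u : E(u) > t\}$ is an $\F^*_t$-stopping time with $\F^*_{S(t)} = \F_t$, which is the first assertion of~(1). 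Since $W(E(\cdot))$ is then a continuous $\QQ$-local martingale in $(\F^*_t)$ with $\langle W(E(\cdot))\rangle(t) = E(t)$ and $E(t)\to\infty$, applying DDS \cite[Theorem V.1.6]{RY.99} with the time change $t\mapsto S(t)$ shows that $W(E(S(\cdot)))$ is an $(\F^*_{S(t)})$-Brownian motion under $\QQ$; and $W(E(S(t))) = W(t)$ because $S$ is strictly increasing. This gives~(1).

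Items~(2) and~(3) share the same structure, so I would isolate one ``measure-change step'' and apply it twice. Suppose $N$ is a continuous $(\F^*_t)$-local martingale under a probability measure $\mathbb{M}$, of the form $N(t) = \eta(E(t))$ for some $\mathbb{M}$-Brownian motion $\eta$ independent of $S$, so that $\langle N\rangle(t) = E(t)$, and fix $a\in\R$. Conditioning on the $\sigma$-field generated by $S$ reduces $N$ to a deterministic time change of $\eta$, whence $\exp\{aN(t) - \tfrac{a^2}{2}E(t)\}$ is a genuine $\mathbb{M}$-martingale in $(\F^*_t)$, not merely a local one; being strictly positive with $\mathbb{M}$-mean one, it is the density of a measure $\mathbb{M}'\sim\mathbb{M}$ on $\F^*_T$. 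Girsanov's theorem — the Girsanov--Meyer form \cite[Chapter III, Theorem 40]{Protter.04} suffices, and classical Girsanov applies since $N$ is continuous — then makes $N(t) - aE(t)$ an $\mathbb{M}'$-local martingale with quadratic variation $E(t)$, and a second appeal to DDS with the time change $S$ makes $(N - aE)(S(\cdot))$ an $(\F^*_{S(t)})$-Brownian motion under $\mathbb{M}'$. Taking $(\mathbb{M},\mathbb{M}',\eta,N,a) = (\QQ,\Q,W,W(E(\cdot)),-\mu/\sigma)$ — the hypothesis $W\perp S$ under $\QQ$ being given — yields $\Q\sim\QQ$ on $\F^*_T$, together with (2)(c) since $N - aE = Z^*$ and (2)(b) since $(N-aE)(S(\cdot)) = Z$; substituting $W(E(t)) = Z^*(t) - \tfrac{\mu}{\sigma}E(t)$ into \eqref{eqn:agbm} rewrites $X^*(t) = x\exp\{\sigma Z^*(t) - \tfrac{\sigma^2}{2}E(t)\}$, an $\F^*_t$-local martingale under $\Q$, which is (2)(a).

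To obtain (2)(d), and so license the second use of the measure-change step, I would run the factorization computation already in the text: for bounded measurable $F,G$, the density \eqref{eqn:dQdPFT}, the relation $E(S(T)) = T$ holding $\QQ$-a.s., the normalization $\E_{\QQ}[e^{-\frac{\mu}{\sigma}Z(T)}] = e^{-\frac{\mu^2}{2\sigma^2}T}$, and the $\QQ$-independence of $W$ and $S$ combine to give $\E_{\Q}[F(Z(u);u\le T)\,G(S(u);u\le T)] = \E_{\Q}[F(Z(u);u\le T)]\,\E_{\QQ}[G(S(u);u\le T)]$; taking $F\equiv1$ identifies the law of $S$ under $\Q$ with that under $\QQ$, and the full identity then reads as the independence of $Z$ and $S$ under $\Q$, which is~(2)(d). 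Now the measure-change step applies a second time with $(\mathbb{M},\mathbb{M}',\eta,N,a) = (\Q,\PR,Z,Z^*,-\sigma/2)$, the required independence of $Z$ and $S$ under $\Q$ being exactly~(2)(d): this gives $\PR\sim\Q$ on $\F^*_T$, together with (3)(b) since $N - aE = Z^*(\cdot) + \tfrac{\sigma}{2}E(\cdot) = B^*$ and (3)(a) since $(N-aE)(S(\cdot)) = Z(\cdot) + \tfrac{\sigma}{2}\,\cdot = B$. Finally, repeating the factorization argument once more under $\PR$, now invoking~(2)(d) in place of the $\QQ$-independence of $W$ and $S$, yields~(3)(c).

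The step I expect to require the most care is the promotion of the exponential density processes from local to true martingales: without it neither $\Q$ nor $\PR$ is a priori a probability measure, and Girsanov cannot legitimately be invoked. I would handle this uniformly by conditioning on the independent clock $S$, under which the relevant stochastic exponentials become exponential martingales of honestly (deterministically) time-changed Brownian motions; the residual work is the tower-property bookkeeping needed to descend from the conditional to the unconditional statements, and respecting the ordering above — establishing the invariance of the law of $S$ and its independence under each measure before performing the next measure change.
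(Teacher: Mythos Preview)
Your proposal is correct and follows essentially the same approach as the paper: the paper's proof is the discussion preceding the lemma statement, which uses exactly the tools you identify — DDS for the Brownian motion claims, Girsanov--Meyer for the local-martingale claims after the measure change, conditioning on $E$ (equivalently $S$) to promote the exponential densities to true martingales, and the same factorization computation $\E_{\Q}[FG] = \E_{\Q}[F]\,\E_{\QQ}[G]$ for the independence and invariance of the law of $S$. The only difference is organizational: you abstract the common pattern into a single ``measure-change step'' applied twice, while the paper writes out the two instances separately.
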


	We also state the following corollary, which will be useful in any generalization of Theorem \ref{thm:priceTheorem} to the case where $L\neq x$.
	\begin{cor}\label{cor:strongMarkovIndependence}
	Let $T_b = \inf\{t: B^*(t) = b\}$ be the first hitting time of $b$ by the time-changed Brownian motion $B^*$. Then, under $\PR$,
	\begin{enumerate}
		\item $T_b = S(\inf\{t: B(t) = b\})$ is an $\F^*_{t}$-stopping time; and
		\item $B^*(t+T_b)-b$ is independent of $\F^*_{T_b}$ and is equal in distribution to $B^*$.
		\item $E(t+T_b) - E(T_b)$ is independent of $\F^*_{T_b}$ and is equal in distribution to $E$.
	\end{enumerate}
	\end{cor}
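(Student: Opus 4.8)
The strategy is to reduce everything to the hitting time $\rho_b := \inf\{t : B(t) = b\}$ of the driving Brownian motion $B$ (under $\PR$) and to repeat the argument of Proposition \ref{prop:increments} almost verbatim. First I would dispatch (1): under $\PR$ the process $B^*$ is $(\F^*_t)$-adapted with continuous paths by Lemma \ref{lem:equivalentMeasures}, and $(\F^*_t)$ satisfies the usual conditions, so the hitting time $T_b$ of $\{b\}$ is an $(\F^*_t)$-stopping time. To show $T_b = S(\rho_b)$ almost surely, I would use that $E$ is continuous, nondecreasing, with $E(0)=0$ and $E(\infty)=\infty$, hence surjective onto $[0,\infty)$; since $B$ is continuous and $B(u)=b$ forces $u\ge\rho_b$, continuity of $E$ gives
\begin{equation*}
  T_b \;=\; \inf\{t : B(E(t))=b\} \;=\; \inf\{t : E(t)\ge\rho_b\} \;=\; S(\rho_b-).
\end{equation*}
Because $B$ and $S$ are independent under $\PR$ (Lemma \ref{lem:equivalentMeasures}) and $\rho_b$ is a functional of the path of $B$, the subordinator $S$ almost surely does not jump at the independent time $\rho_b$ --- this is the fact already used for the middle equality in \eqref{eqn:hatTauandS} --- so $S(\rho_b-)=S(\rho_b)$ a.s.

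Next I would identify the stopped $\sigma$-algebra. Under $\PR$, $B$ is an $(\F^*_{S(t)})$-Brownian motion, so $\rho_b$ is an $(\F^*_{S(t)})$-stopping time, and since $\F^*_{S(t)}=\F_t$ (established in Section \ref{sec:timeChangeExcursions}, with $B$ and $S$ now playing the roles of the driving objects under $\PR$), the same optional-stopping bookkeeping that produced $\F^*_{\hat\tau_x}=\F_{\tau_x}$ yields $\F^*_{T_b}=\F^*_{S(\rho_b)}=\F_{\rho_b}$.

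For (2) and (3) I would copy the computation in the proof of Proposition \ref{prop:increments} with $\tau_x$ replaced by $\rho_b$. Writing $\tilde S(v)=S(\rho_b+v)-S(\rho_b)$ and $\tilde E$ for its right-continuous inverse, one has $E(S(\rho_b)+t)=\rho_b+\tilde E(t)$ and, using $E(S(\rho_b))=\rho_b$ (valid since $S$ is strictly increasing), $E(T_b+t)-E(T_b)=\tilde E(t)$, while
\begin{equation*}
  B^*(T_b+t)-b \;=\; B\bigl(E(S(\rho_b)+t)\bigr)-b \;=\; B(\rho_b+\tilde E(t))-b .
\end{equation*}
Since $(B,S)$ is a two-dimensional L\'evy process and $\rho_b$ is a stopping time for its natural filtration, its post-$\rho_b$ increments are independent of $\F_{\rho_b}=\F^*_{T_b}$; in particular $\tilde S$ is a copy of $S$ independent of $\F^*_{T_b}$, so $\tilde E\overset{d}{=}E$ is independent of $\F^*_{T_b}$, and $(B(\rho_b+u)-b;\,u\ge0)$ is a Brownian motion independent of $\F^*_{T_b}$. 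Combining the two independent pieces gives $E(T_b+\cdot)-E(T_b)=\tilde E\overset{d}{=}E$, independent of $\F^*_{T_b}$, which is (3); and $B^*(T_b+\cdot)-b = \bigl(B(\rho_b+\tilde E(\cdot))-b\bigr)\overset{d}{=}B(E(\cdot))=B^*$, independent of $\F^*_{T_b}$, which is (2). Finiteness of $T_b$ for $b\ne0$ follows from recurrence of $B$ and $S(\rho_b)<\infty$ a.s.; for $b=0$, $T_0=0$ and the claims are trivial.

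The only delicate point, and the one I would treat with care, is the filtration bookkeeping behind $\F^*_{T_b}=\F_{\rho_b}$: one must know that $\rho_b$ is a stopping time in a filtration with respect to which $B$ is strongly Markov while $S$ retains its independent-increments structure. But this is precisely what the identities $\F^*_{S(t)}=\F_t$ and $\F^*_{\hat\tau_x}=\F_{\tau_x}$ from Section \ref{sec:timeChangeExcursions} provide, now translated to the $\PR$-setting of Lemma \ref{lem:equivalentMeasures}, so no genuinely new difficulty arises.
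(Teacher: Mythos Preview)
Your proposal is correct and follows essentially the same route as the paper: identify $T_b = S(\rho_b)$ via the no-jump-at-independent-time argument, then apply the strong Markov property of the L\'evy process $(B,S)$ at the stopping time $\rho_b$ in the filtration $(\F^*_{S(t)})=(\F_t)$ to split off independent copies $\tilde B$, $\tilde S$, and finish with the same shift computation $E(T_b+t)=\rho_b+\tilde E(t)$. Your treatment of the filtration identity $\F^*_{T_b}=\F_{\rho_b}$ and of the edge case $b=0$ is slightly more explicit than the paper's, but there is no substantive difference in method.
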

	\begin{proof}
	We begin with proving item 1. First note that $\F^*_t$ satisfies the usual conditions and $B$ is $\F^*_t$-progressive, so $T_b$ is a d\'{e}but time and is hence a stopping time. Note that
	\begin{equation*}
		\begin{split}
			T_b &= \inf\{t: B(E(t)) = b\}= \inf\{t: E(t) = \inf\{u: B(u) = b\}\}.
		\end{split}
	\end{equation*}	
	Let $\tilde{T}_b = \inf\{u: B(u) = b\}$ be the first hitting time of $b$ by the Brownian motion $B$. Obviously, $\tilde{T}_b$ is an $\F^*_{S(t)}$-stopping time. 
	
	Also note that $S(\tilde{T}_b) = S(\tilde{T}_b-)$ with probability 1 since $B$ is independent of $S$, and hence
	\begin{equation}\label{eqn:TBSTB}
		T_b = \inf\{t:E(t) = \tilde{T}_b\} = S(\tilde{T}_b-) = S(\tilde{T}_b).
	\end{equation} This proves item 1.

	To prove items 2 and 3, we note that $(B,S)$ is a L\'{e}vy process under $\PR$ and thus strong Markov process under $\PR$ with respect to the filtration $(\F^*_{S(t)})_{t\ge 0}$. Thus $\tilde{B} = (\tilde{B}(t);t\ge 0)$ defined by $\tilde{B}(t) = B(t+\tilde{T}_b)-b$ and $\tilde{S} = (\tilde{S}(t);t\ge 0)$ defined by $\tilde{S}(t):=S(t+\tilde{T}_b)-S(\tilde{T}_b)$ are each independent of $\F^*_{S(\tilde{T}_b)} = \F^*_{T_b}$ and are equal in law to $B$ and $S$, respectively.
	
	Now observe from equation \eqref{eqn:TBSTB}
	\begin{equation*}
		\begin{split}
		E(t+T_b) &= \inf\{u: S(u)> t+S(\tilde{T}_b)\} = \inf\{v+\tilde{T}_b: S(v+\tilde{T}_b)-S(\tilde{T}_b) > t\} \\
		&= \inf\{v: \tilde{S}(v)>t\} + \tilde{T}_b.
		\end{split}
	\end{equation*} Therefore, if $\tilde{E}$ is the right-continuous inverse of $\tilde{S}$ then
\begin{equation*}
	E(t+T_b)  = \tilde{E}(t) +\tilde{T}_b,
\end{equation*} which, together with \eqref{eqn:TBSTB}, proves item 3. But then
\begin{equation*}
	B^*(t+T_b)  = B(E(t+T_b)) = B(\tilde{E}(t)+\tilde{T}_b) = b+\tilde{B}(\tilde{E}(t)),
	\end{equation*} which proves item 2.


	\end{proof}
	
	We finish this section by giving an expression for \eqref{eqn:CdiForTimeChange} under the measure $\PR$ and the Brownian motion $B$. Before writing that down note, that $H_{L,D}^-(X^*)\le T$ if and only if $H_{b,D}^-(B^*)\le T$ where $b = \frac{1}{\sigma} \log(L/x)$. Consequently, comparing equations \eqref{eqn:CdiForTimeChange} and \eqref{eqn:dQQdQ} we arrive at
	\begin{equation}\label{eqn:downandincall}
	\begin{split}
	V_{d,i}^*&(x,T,L,D,\Phi)\\
	&=\E_{\PR}\left[ 1_{[H_{b,D}^-(B^*)\le T]} \exp\left\{\frac{\sigma}{2} B(E(T)) - \frac{\sigma^2}{8} E(T) \right\} \Phi(xe^{\sigma B(E(T))}) \right]\\
	&= \E_{\PR}\left[1_{[H_{b,D}^-(B^*)\le T]} f_{x,\sigma}(B(E(T)), E(T))  \right]
	\end{split}
	\end{equation}
	where $f_{x,\sigma}(z,s) = \exp\left( \frac{\sigma}{2} z - \frac{\sigma^2}{8} s \right) \Phi\left(xe^{\sigma z}\right)$.
	
	\subsection{The proof of Theorem \ref{thm:priceTheorem}}

	
	Before continuing, recall the excursion construction of both a Brownian motion and its time-change in Theorem \ref{thm:pppfortc} and Corollary \ref{cor:EFromExcursion}. We will use the notation in Corollary \ref{cor:EFromExcursion} along with the various subordinators $\tu, \htu$, etc. freely.
	
	We note that in Theorem \ref{thm:priceTheorem} we are pricing a Parisian option where the barrier $L$ is the initial stock price, i.e. $L = x$. By the discussion at the end of the previous section this means that $b = 0$. Moreover, since the Brownian motion $B$ starts from $0$ we have 
	\begin{equation*}
	\{H_{b,D}^-(B^*) \le T\} = \{H_{0,D}^{-} (B^*)\le T\}= \{g^*\le T-D\},
	\end{equation*} where $g^*$ is defined in \eqref{eqn:gstarDstar_def} and is studied in Section \ref{sec:firstExcursion}. Indeed, $H_{0,D}^-(B^*)$ is the first time the time-change $B^*$ spends $D$ consecutive units of time below the origin. This occurs before time $T$ if and only if the corresponding excursion starts before time $T-D$.
	We now have two situations to consider: 1) $g^*\le T-D$ but $d^*> T$; and 2) $d^*\le T$. We handle each case separately.
	
	\subsubsection{Case 1: $g^*\le T-D$ and $d^*>T$}
	
	Recall $J = \inf\{x:\gamma_x =-1, \zeta(e_x^*) >D\}$ is exponentially distributed. The following two events are equal: 
	\begin{equation*}
	\{g^*\le T-D, d^*>T\} = \{\hat{\tau}_{J-} \le T-D, \hat{\tau}_J >T \}.
	\end{equation*}

	We have
	\begin{align}
	\E&\left[1_{[g^*\le T-D, d^*> T]} f_{x,\sigma}\left(B(E(T)), E(T)\right) \right]\nonumber\\
	&= \E\left[ 1_{[g^*\le T-D, d^*>T]} e^{-\frac{\sigma^2}{8} E(g^*)} f_{x,\sigma}\left(B(E(T)), E(T)-E(g^*)\right) \right]\nonumber \\
	&= \int \PR(g^*\in ds, E(g^*)\in dv) 1_{[s\le T-D]} e^{-\frac{\sigma^2}{8}v}  \nonumber\\
	&\qquad\qquad \times  \int_{\EE} \n^*(dw^*|\zeta(w^*)>T-s) f_{x,\sigma}\left(-w^*(T-s), \langle w^*\rangle (T-s) \right). \label{eqn:u1Help1}
	\end{align}	Indeed, the first equality is simply the fact $f_{x,\sigma}(z, s) = e^{-\frac{\sigma^2}{8}s_0} f_{x,\sigma}(z,s-s_0)$. The second equality follows from conditioning the value of $(g^*,E(g^*))$, Proposition XII.1.13 in \cite{RY.99} and recognizing that conditioning on $g^* = s$ and $d^*>T$ the excursion over $(g^*,d^*)$ is distributed as $-w^*$ under $\n^*(dw^* | \zeta(w^*)> T-s).$

	Before continuing, let us discuss the one-dimensional time-marginal distribution of $(w^*(t), \langle w^*\rangle(t))$ under $\n^*(dw^* | \zeta(w^*)> t)$. By Theorem \ref{thm:excursionChange} and the same reasoning in \eqref{eqn:zetawStar}, this is equivalent to understanding
	\begin{equation*}
	\Xi(w, M, \kappa)(t) \qquad\text{under}\quad \left( \n\otimes \mathcal{L}(M)\right) \left(dw\times dM \bigg| \int_0^{\infty} y M([0,\zeta(w)]) + \kappa \zeta(w) > t\right).
	\end{equation*}
	This is also the same as
	\begin{equation*}
	\Xi(w,M,\kappa)(t) \qquad\text{under}\quad \left(\n\otimes \mathcal{L}(M)\right) \left(dw\times dM \bigg| \zeta(w)> \inf\left\{u: \int_0^\infty yM([0,u],dy) + \kappa u>t \right\}\right).
	\end{equation*}
	It is now easy to see that 
	\begin{equation*}
	\left(w^* \text{  under  } \n^*(dw^*| \zeta(w^*)>t) \right) \overset{d}{=} \left(w(E(\cdot)) \text{  under  } w\sim \n(dw| \zeta(w)> E(t)), E\text{ independent of }w \right).
	\end{equation*}

	We observe that $w(t)$ under $\n(\ \cdot \ | \zeta(w)>t)$ has law \cite[Section XII.4]{RY.99}
	\begin{equation*}
	\n(w(t)\in dz| \zeta(w)>t)= \frac{1}{\n(\zeta>t)} \sqrt{\frac{2}{\pi t^3}} z e^{-z^2/2t}\,dz = \frac{z}{t} e^{-z^2/2t}\,dz.
	\end{equation*}
	This is precisely the marginal density of a 2-dimensional Bessel process at time $t$ \cite[Section XI.1]{RY.99}. Let $Y = (Y(u);u\ge 0)$ denote a 2-dimensional Bessel process starting from $0$ independent of $E$ and fix a bounded function $f:\R^2\to \R$. By the reasoning above and writing $\E^E$ for the expectation only with respect to the process $E$ and similarly for $\E^Y$ we have
	\begin{equation}\label{eqn:u1Help2}
	\begin{split}
	\int_{\EE} &f\left(w^*(t), \langle w^*\rangle(t) \right)\,\n^*(dw^*|\zeta(w^*)>t)= \E^{E} \left[\int_{\EE}  f\left( w(E(t)), E(t)\right)  \,\n(dw | \zeta(w)>E(t))\right]\\
	&= \E^E \left[ \E^Y [ f(Y(E(t)),E(t)) ] \right] = \E[f(Y(E(t)),E(t))].
	\end{split}
	\end{equation}
	
	More generally, let $Y$ be any $2$-dimensional Bessel process starting from $Y(0)\ge 0$. Let $a(t):=a(0)+t$ be a deterministic Markov process. Observe that the 2-dimensional process $\xi (t) = (Y(t),a(t))$ is strong Markov with respect to measures $\{\mathbb P_{(z,s)}: (z,s) \in \R_{+} \times \R\}$ under which $\xi(0)=(z,s)$ $\PR_{(z,s)}$-almost surely. Denote by $\mathbb E_{(z,s)}$ its corresponding mathematical expectation. In \eqref{eqn:u1Help2}, the expectation $\mathbb E$ stands for $\mathbb E_{(0,0)}$. The process $\xi$ has generator
	\begin{equation*}
	\mathscr{L}_{\xi} f(z,s) = \left(\frac{1}{2} \frac{\partial^2}{\partial z^2} + \frac{1}{2z} \frac{\partial}{\partial z} + \frac{\partial}{\partial s}\right) f(z,s),
	\end{equation*}
and the domain contains functions $f\in C^\infty$ such that $f(z,s) = g(z)h(s)$ for smooth functions $g$ and $h$ such that $g$ vanishes in a neighborhood of zero. This can be seen from \cite[Section 2.7]{IM.74} for the Bessel coordinate $Y$ and a direct computation afterwards. In particular $f(z,s ) =f_{x,\sigma}(-z,s)$ is in the domain of $\mathscr{L}_\xi$.
	As discussed in Section \ref{sec:timeFracPDE}, we can write for any $x = (z,s)\in \R_+\times\R$
	\begin{equation*}
	u(z,s;t) := \E_{(z,s)}[f(\xi (E(t)))],
	\end{equation*} as the unique solution to
	\begin{equation}\label{eqn:u1Help3}
	\partial_t^S u = \mathscr{L}_\xi u,\qquad\text{with}\qquad u(z,s;0) = f(z,s),
	\end{equation}
	where $\partial_t^S$ is defined in \eqref{eqn:genCaputo}.
	
	Combining \eqref{eqn:u1Help1}, \eqref{eqn:u1Help2} and the time-fractional PDE discussion above, we get
	\begin{equation}\label{eqn:u1m1inproof}
	\begin{split}
	\E&\left[1_{[g^*\le T-D, d^*> T]} f_{x,\sigma}\left(B(E(T)), E(T)\right) \right]\\
	&=\int 1_{[s\le T-D]} e^{-\frac{\sigma^2}{8} v} u_1(0,0; T-s)\,m_1(ds,dv)
	\end{split}
	\end{equation} where $u_1$ solves \eqref{eqn:u1Help3} with  $f(z,s) = f_{x,\sigma}(-z,s) = \exp\left\{-\frac{\sigma}{2} z - \frac{\sigma^2}{8}s\right\}\Phi(xe^{-\sigma z})$ and $m_1(ds,dv) = \PR(g^*\in ds, E(g^*)\in dv)$ which has Laplace transform \eqref{eqn:jointLT_for_E_gStar}. This matches the definitions appearing in Theorem \ref{thm:priceTheorem}.
	
	\subsubsection{Case 2: $d^*\le T$}
	
	The second case is relatively easier because of conclusion 2 in Lemma \ref{lem:independence}. Indeed, $(e^*_{x+J}, \gamma_{x+J})_{x>0}$ is equal in law to $(e_x^*,\gamma_x)$ and is independent of $\GG_J$.  As in Theorem \ref{thm:pppfortc} and Corollary \ref{cor:EFromExcursion} let $\tilde{B}^* := \tilde{B}\circ \tilde{E}$ and $\tilde{E}$ be, respectively, the time-changed Brownian motion and the time-change constructed, from the excursion process $(e^*_{x+J}, \gamma_{x+J})_{x>0}$. Now observe that $d^* = \hat{\tau}_J$ and that the point process $(e^*_{x+J},\gamma_{x+J})_{x>0}$ is independent of both $d^*$ and $E(d^*)$ (cf. conclusion 2 from Lemma \ref{lem:independence}). By a similar argument, although less-involved, to the argument utilized in Case 1,  we have
	\begin{equation}\label{eqn:u2m2inproof}
	\begin{split}
	\E&\left[ 1_{[d^*\le T]} f_{x,\sigma}(B(E(T)), E(T)) \right] = \E\left[ 1_{[d^*\le T]} e^{-\frac{\sigma^2}{8} E(d^*)} f_{x,\sigma}(\tilde{B}(\tilde{E}(T-d^*)), \tilde{E}(T-d^*))\right]\\
	&= \int_0^\infty \PR( d^*\in ds, E(d^*)\in dv) 1_{[D\le s\le T]} e^{-\frac{\sigma^2}{8} v} \E\left[f_{x,\sigma}(\tilde{B}(\tilde{E}(T-s)), \tilde{E}(T-s))\right]\\
	&= \int 1_{[D\le s\le T]} e^{-\frac{\sigma^2}{8} v} u_2(0,0;T-s)\,m_2(ds,dv),
	\end{split}
	\end{equation}	 where $u_2$ is the unique solution to 
\begin{equation*}
	\partial_t^S u = \left( \frac{1}{2} \frac{\partial^2}{\partial z^2} + \frac{\partial}{\partial s}\right) u,\qquad u(z,s;0) = f_{x,\sigma}(z,s)
\end{equation*}and $m_2(ds,dv) = \PR(d^*\in ds, E(d^*)\in dv)$. The measure $m_2$ has Laplace transform \eqref{eqn:jointLT_for_E_dstar}. These two definitions of $u_2$ and $m_2$ match the definitions found in Theorem \ref{thm:priceTheorem}.

	The proof of Theorem \ref{thm:priceTheorem} now follows from adding together equations \eqref{eqn:u1m1inproof} and \eqref{eqn:u2m2inproof}.

\section{Consequences of the Mapping Theorem}\label{sec:occupationChange}

A major reason why we could obtain some explicit results in Section \ref{sec:basicn*}, particularly in the $\beta$-stable subordinator case, is because we can relate an excursion of $R$ of duration $T$ to an excursion of $R^*$ of duration $T^*\overset{d}{=}S(T)$. As we will see in this section, this is essentially a consequence of the mapping theorem (Lemma \ref{lem:mappingTheorem}).

We now recall briefly the set-up of Theorem \ref{thm:timeChange1}. The process $X$ is a c\`{a}dl\`{a}g process and $S$ is an independent strictly increasing subordinator. We let $X^*(t) = X({E(t)})$ denote the time-changed process. Also recall the set-up of $\nu(I; -)$ and $\nu^*(I;-)$ in and around Theorem \ref{thm:timeChange1}, along with the notation $A(v)$ and $A^*(v)$. 

\begin{proof}[Proof of Theorem \ref{thm:timeChange1}]
	
	By conditioning on $(X(t);t\in[0,\tau]) = (w(t);t\in[0,T])$, which is independent of $S$ it suffices to show the following lemma:
		\begin{lem}\label{lem:AvDeterministic}
		Let $w:[0,T]\to \R$ be Borel measurable for some $T\in[0,\infty]$. Let $\nu^*(dv)$ and $\nu(dv)$ be the occupation measures of $(w(E(t));t\in[0,S(T)])$ and $w$ respectively. Define $A^*(v) = \nu^*(-\infty, v]$ and $A(v) = \nu(-\infty, v]$.
		Then
		\begin{equation*}
			\left(A^*(v);v\in\R \right) \overset{d}{=} \left(S(A(v));v\in\R \right).
		\end{equation*}
	\end{lem}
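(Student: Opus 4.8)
The plan is to convert the claim into a statement about the Poisson random measure driving $S$, by means of a deterministic change of variables, and then to invoke the mapping theorem (Lemma~\ref{lem:mappingTheorem}). First I would record the change of variables: since $S$ is strictly increasing and right-continuous, $E(t)\le u$ if and only if $t\le S(u)$ for all $u\in[0,T]$ and $t\ge 0$, so the image of $\Leb|_{[0,S(T)]}$ under $E$ is the Borel measure on $[0,T]$ with distribution function $u\mapsto S(u)$, i.e. the Lebesgue--Stieltjes measure $dS$. Taking $G_v:=\{u\in[0,T]:w(u)\le v\}$, this gives, for every $v\in\R$,
\begin{equation*}
 A^*(v)=\Leb\{t\in[0,S(T)]:w(E(t))\le v\}=\int_{[0,T]}1_{[w(u)\le v]}\,dS(u).
\end{equation*}
Inserting the L\'evy--It\^o decomposition \eqref{eqn:PRM_subordinator} of $S$, with $M$ a Poisson random measure on $[0,\infty)\times(0,\infty)$ of intensity $\Leb\otimes\nu$, and recalling $A(v)=\Leb(G_v)$, we obtain
\begin{equation}\label{eqn:AstarProposal}
 A^*(v)=\kappa\,A(v)+\int_{G_v\times(0,\infty)}y\,M(du,dy),
\end{equation}
whereas trivially $S(A(v))=\kappa\,A(v)+\int_{[0,A(v)]\times(0,\infty)}y\,M(du,dy)$.

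Next I would match laws. Both $v\mapsto A^*(v)$ and $v\mapsto S(A(v))$ are non-decreasing and right-continuous, so it suffices to match finite-dimensional distributions. Fix $v_1<\dots<v_n$ and set $A(v_0):=0$, $G_{v_0}:=\emptyset$, $\Delta_k:=A(v_k)-A(v_{k-1})$. The sets $G_{v_1}\subseteq\dots\subseteq G_{v_n}$ are nested, so in \eqref{eqn:AstarProposal} the quantities $\int_{(G_{v_k}\setminus G_{v_{k-1}})\times(0,\infty)}y\,M(du,dy)$, $k=1,\dots,n$, are functionals of the restrictions of $M$ to the pairwise disjoint sets $(G_{v_k}\setminus G_{v_{k-1}})\times(0,\infty)$ and hence independent; and by Lemma~\ref{lem:mappingTheorem} applied to the projection $(u,y)\mapsto y$, the restriction of $M$ to any product set $G\times(0,\infty)$ pushes forward to a Poisson random measure on $(0,\infty)$ of intensity $\Leb(G)\cdot\nu$, so that $\int_{(G_{v_k}\setminus G_{v_{k-1}})\times(0,\infty)}y\,M(du,dy)\overset{d}{=}\int_{[0,\Delta_k]\times(0,\infty)}y\,M(du,dy)$. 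The identical argument applied to the disjoint intervals $(A(v_{k-1}),A(v_k)]$ shows that the increments $S(A(v_k))-S(A(v_{k-1}))$ are independent with exactly the same marginal laws. Therefore $(A^*(v_i))_{i\le n}$ and $(S(A(v_i)))_{i\le n}$ are each obtained by adding the common deterministic vector $(\kappa A(v_i))_{i\le n}$ to the partial sums of equally distributed vectors of independent random variables, hence have the same law. This proves Lemma~\ref{lem:AvDeterministic}, and Theorem~\ref{thm:timeChange1} then follows by the conditioning on $(X(t);t\in[0,\tau])$ already noted at the start of the proof.

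I expect the only genuinely delicate point to be the change-of-variables identity $E_\#(\Leb|_{[0,S(T)]})=dS|_{[0,T]}$: one must be slightly careful about the flat stretches of $E$ over the jump intervals $[S(u-),S(u))$ and the right-continuity conventions at their endpoints, as well as about the degenerate possibilities $A(v)=+\infty$ (forcing both processes to equal $+\infty$ from that $v$ on) and $T=\infty$, the latter handled by running the argument on $[0,T']$ and letting $T'\uparrow\infty$ with monotone convergence (and using that, for the countably many levels involved, $S$ a.s.\ does not jump at the deterministic points $A(v_i)$). Once that identity is established, the remainder is a routine application of the Poisson-random-measure machinery already used in Section~\ref{sec:firstExcursion}.
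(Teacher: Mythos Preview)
Your proposal is correct and follows essentially the same route as the paper: change of variables for Stieltjes integrals to express $A^*(v)$ as $\int_{[0,T]}1_{[w(u)\le v]}\,dS(u)$, the L\'evy--It\^o decomposition of $S$, and then the mapping theorem for Poisson random measures. The only cosmetic difference is in the packaging of the last step: the paper pushes $M$ forward once under $(w\times\mathrm{id})$ to obtain a single PRM $M^\ast$ on $\R\times(0,\infty)$ with intensity $\nu\otimes\pi$, and then observes that $\tilde M((-\infty,v]\times B):=M((0,A(v)]\times B)$ is another PRM with the same intensity, so the process-level equality in law follows in one stroke; you instead verify finite-dimensional distributions by decomposing into independent increments over the disjoint sets $G_{v_k}\setminus G_{v_{k-1}}$, which is exactly the content of the claim that $M^\ast\overset{d}{=}\tilde M$ unpacked.
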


	 \begin{figure*}
	\centering
	\begin{subfigure}[b]{0.4\textwidth}
		\centering
		\includegraphics[width=\textwidth]{./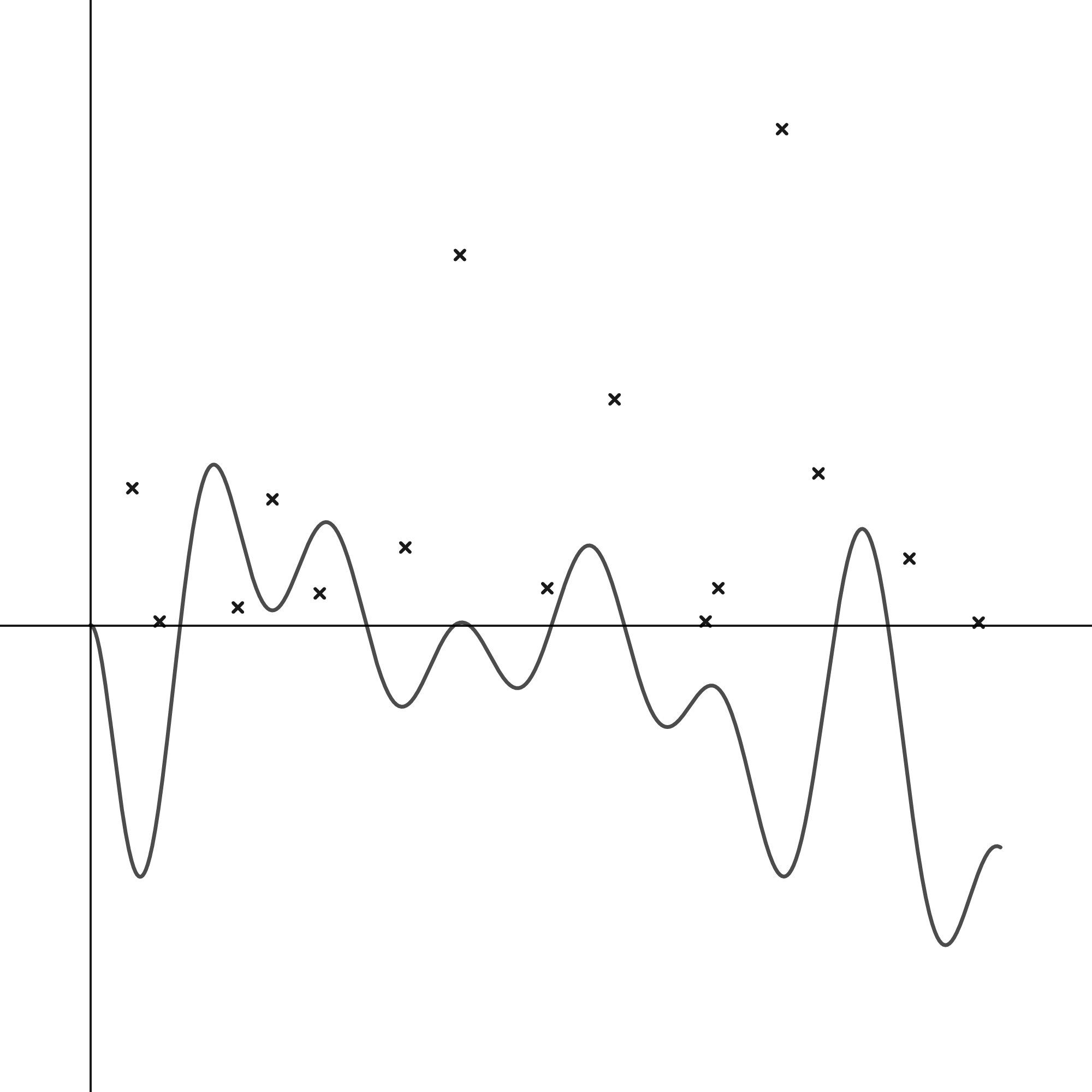} 
		\caption{}
	\end{subfigure}
	\hfill
	\begin{subfigure}[b]{0.4\textwidth}  
		\centering 
		\includegraphics[width=\textwidth]{./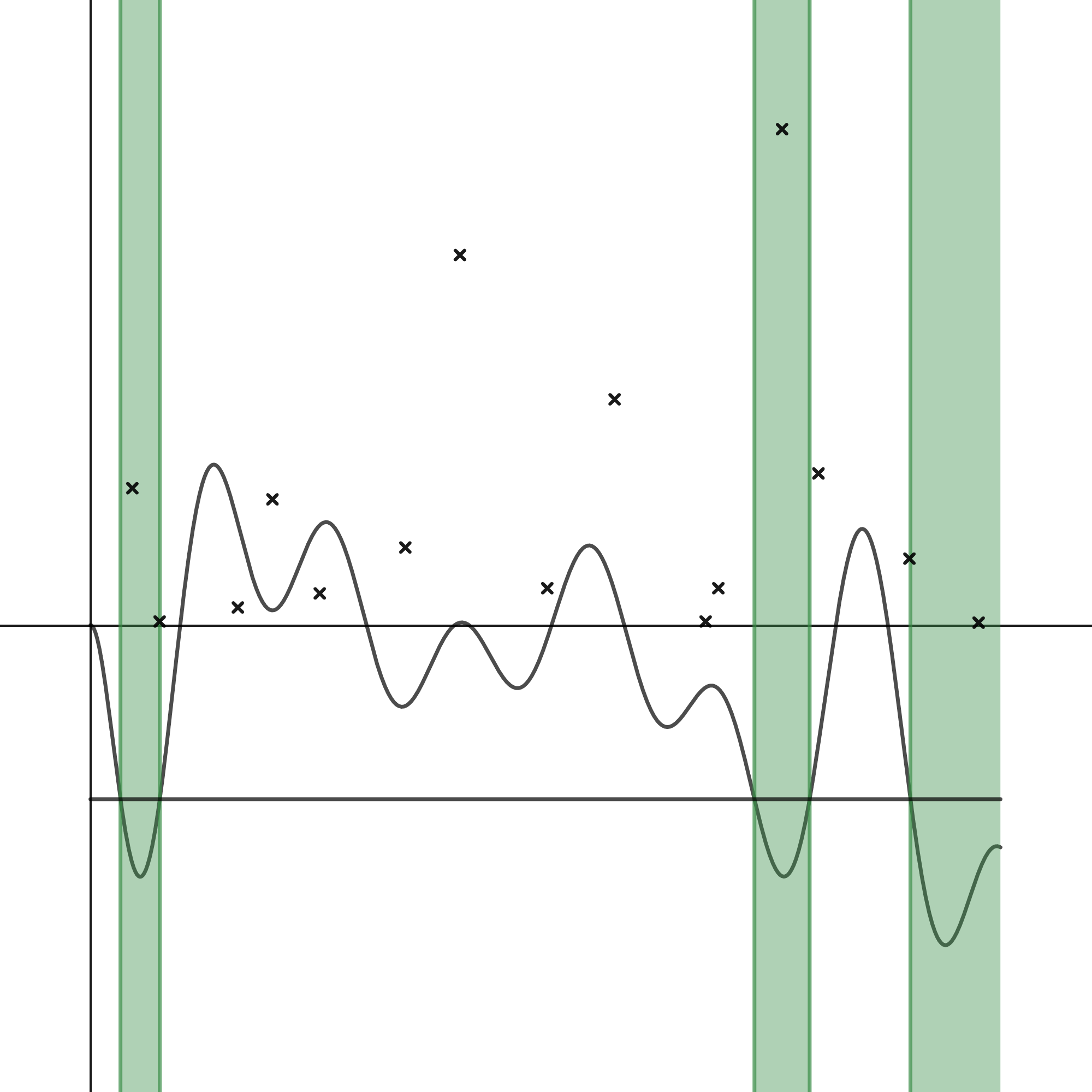} 
		\caption{}
	\end{subfigure}
	\vskip\baselineskip
	\begin{subfigure}[b]{0.4\textwidth}   
		\centering 
		\includegraphics[width=\textwidth]{./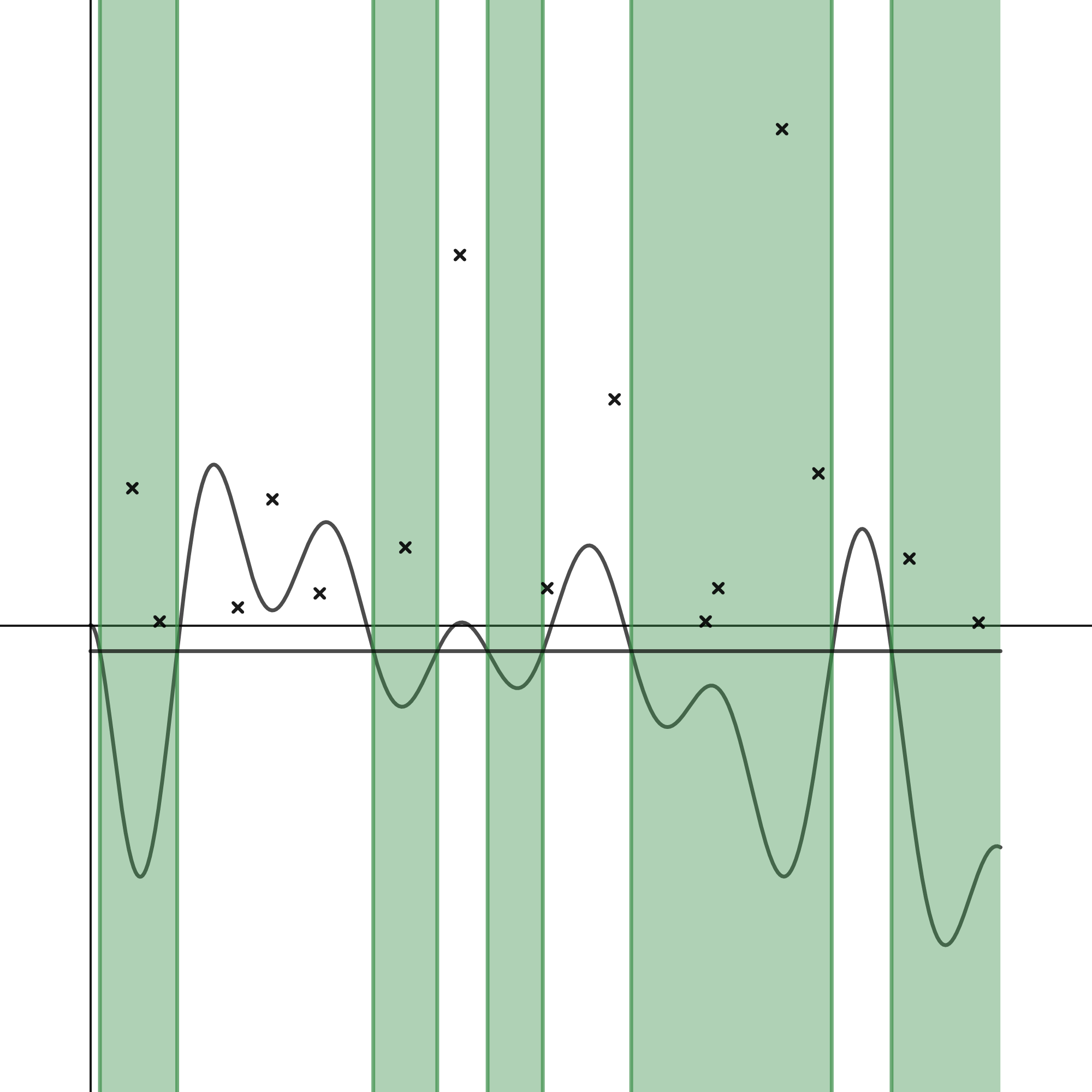}  
		\caption{}
	\end{subfigure}
	\hfill
	\begin{subfigure}[b]{0.4\textwidth}   
		\centering 
		\includegraphics[width=\textwidth]{./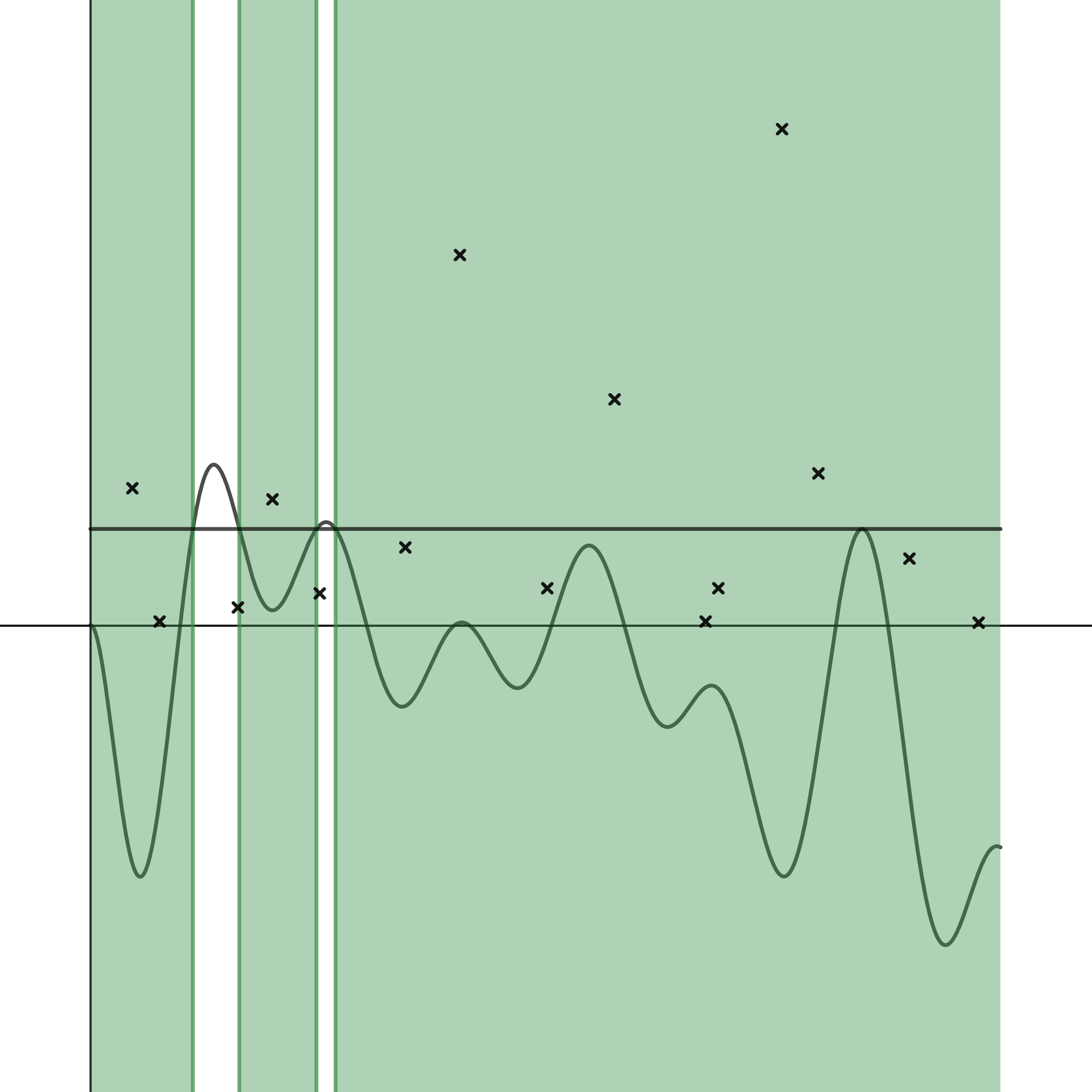}
		\caption{}
		
	\end{subfigure}
	\caption
	{\small Visual representation of how the PRM $M^{\ast}$ is constructed from a path $w$ and a PRM $M$. The atoms of the PRM $N$ are depicted as $\times$'s in the four subfigures, where the atoms of the PRM $\tilde{M}$ are depicted as happening at time $v = w(t)$ whenever the horizontal line is at level $v$. Said another way, the $\times$'s contained in the green shaded area are atoms of $\tilde{M}|_{(-\infty,v]\times (0,\infty)}$.}.
	\label{fig:prmTimeChange}
\end{figure*}
	
\end{proof}
	
	\begin{proof}[Proof of Lemma \ref{lem:AvDeterministic}]
 We note that
	\begin{equation*}
		\int_0^T g(w(t))\,dt = \int_\R g(a)\,\nu(da),\qquad\forall g\text{ bounded and measurable}.
	\end{equation*}
	
	Using the L\'{e}vy-It\^{o} decomposition of $S$, we now observe the simple description of the stochastic (Stieljes) integral with respect to $S$
	\begin{align*}
		\int_0^T g(w(t))\,dS(t) &= \kappa \int_0^T g(w(t))\,dt + \int_0^T \int_{(0,\infty)} g(w(t))\,y M(dt,dy)\\
		&= \kappa \int_\R g(x)\,\nu(dx) + \int_\R \int_{(0,\infty)} g(x)y M^{\ast}(dx,dy),
	\end{align*}
	where $M$ is a PRM with intensity $\Leb\otimes \pi$ and $M^{\ast}$ is the pushforward measure
	\begin{equation*}
		M^{\ast} = (w\times \operatorname{id}_{(0,\infty)})_\# M\quad\text{under}\qquad(w\times \operatorname{id}_{(0,\infty)})(t,y) = (w(t),y).
	\end{equation*} 
	By the mapping theorem, Lemma \ref{lem:mappingTheorem}, the measure $M^{\ast}$ is a Poisson random measure on $\R\times (0,\infty)$ with intensity $\nu\otimes\pi = (w\times \operatorname{id}_{(0,\infty)})_\# (\Leb|_{[0,T]}\otimes \pi)$.
	
	By the change of variable for Stieljes integrals \cite{FT.2012}, we can write
	\begin{equation*}
		\int_0^T g(w(t))\,dS(t) = \int_0^{S(T)}g(w(E(u)))\,du,
	\end{equation*} where $E$ is the right-continuous inverse of $S$ as in \eqref{eqn:Edef}. Therefore, the occupation measure $\nu^*$ of the time-changed path $w^*(u) = w(E(u))$ on the random time-interval $[0,S(T)]$ satisfies
	\begin{equation}\label{eqn:PRMrepNu*}
		\int_\R g(x)\,\nu^\ast(da) = \kappa \int_\R g(x)\,\nu(dx) + \int_\R \int_{(0,\infty)} g(x)y \,M^\ast (dx,dy),\qquad \forall g \in C_c(\R).
	\end{equation}
	
	Equation \eqref{eqn:PRMrepNu*} uniquely determines the occupation measure $\nu^*$ and hence $A^*$. In particular, we get
	\begin{equation*}
		A^*(v) = \kappa\nu(-\infty,v] + \int_{(0,\infty)} y \, M^* ((-\infty,v],dy).
	\end{equation*}
	Recall that $M^*(dx,dy)$ is a Poisson random measure on $\R\times (0,\infty)$ with intensity measure $\nu\otimes \pi$ but so is $\tilde{M}$ defined by $\tilde{M}((-\infty , v]\times B) = M((0,A(v)]\times B)$ as can be easily verified. It follows
	\begin{equation*}
		\left(A^*(v);v\ge 0\right) \overset{d}{=} \left( \kappa A(v) + \int_{(0,\infty)}y \tilde{M}((0,A(v)], dy; v\ge 0 \right)\overset{d}{=} \left(S(A(v));v\ge \right),
	\end{equation*}
	as desired.
	\end{proof}

\section{A Ray-Knight Theorem}\label{sec:rayKnight}
The classical Ray-Knight theorems \cite[Chapter XI]{RY.99} describe the local time of a Brownian motion stopped at certain random times in terms of a piecewise concatenation of squared Bessel processes of dimensions 0, 2, and 4. Said in another way, the density of the occupation measure of Brownian motion at certain random time intervals is a concatenation of continuous state branching processes with immigration. 
A continuous state branching process with immigration ($\CBI$ process) $Z = (Z(v);v\ge 0)$ is a strong Markov process on $[0,\infty]$ uniquely characterized by two L\'{e}vy processes $Y^{(1)}$ and $Y^{(2)}$ where $Y^{(1)}$ has no negative jumps (i.e. is spectrally positive) and $Y^{(2)}$ is a subordinator independent of $Y^{(1)}$ \cite{KW.71}. Perhaps the simplest description of these processes comes from the Lamperti transform \cite{CLU.09,Lamperti.67} as generalized by Cabellero, P\'{e}rez Garmendia and Uribe Bravo in \cite{CPU.13}: A CBI process started at $x$ with branching mechanism $\psi$ and immigration rate $\hp$ is the law of the unique c\`{a}dl\`{a}g solution to
\begin{equation}\label{eqn:lamperti}
Z(v) = x + Y^{(1)}\left(\int_0^v Z(u)\,du\right) + Y^{(2)}(v),\qquad v\ge 0
\end{equation} where $Y^{(1)}$ and $Y^{(2)}$ are L\'{e}vy processes with Laplace transforms
\begin{equation*}
\E[e^{-\lambda Y^{(1)}(t)}] = \exp\left\{t\psi(\lambda)\right\}\qquad \text{and}\qquad \E[e^{-\lambda Y^{(2)}(t)}] = \exp\left\{-t\hp(\lambda)\right\}.
\end{equation*} We denote by $\CBI_x(\psi,\hp)$ the law of a such a process. The choice of the symbol $\hp$ is to distinguish the subordinator $Y^{(2)}$ from $S$.

Let us now state the main result in this section.

\begin{thm}\label{thm:1d}
Suppose that $S$ is a strictly increasing subordinator with Laplace exponent $\phi$ with inverse $E$ in \eqref{eqn:Edef}. Let $X(t) = x + B(t) - \alpha t$ be an independent Brownian motion with negative drift started from $x\ge 0$. For $X^*(t) = X(E(t))$ and let $g:\R_+\to\R_+$ be a bounded Borel measurable function with compact support contained in $[0,a]$. Then
\begin{equation*}
\E \left[\exp\left\{-\int_0^{\tau^*} g(X^*(t))\,dt\right\} \right] = \exp \left\{ - 2\int_0^x u(s)\,ds\right\}, \qquad \tau^* = \inf\{t: X^*(t) = 0\}
\end{equation*} where $u$ is the unique non-negative solution to
\begin{equation}\label{eqn:udef}
u(r)+ 2\int_r^a (u(s)^2 + \alpha u(s))\,ds = \int_r^a \phi(g(s))\,ds
\end{equation}

\end{thm}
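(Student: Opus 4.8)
The plan is to combine Theorem~\ref{thm:timeChange1} with the classical Ray--Knight description of the occupation measure of Brownian motion with drift due to Warren \cite{Warren.97}. First I would recall the relevant Ray--Knight statement: if $X(t) = x + B(t) - \alpha t$ is started from $x \ge 0$ and $\tau = \inf\{t : X(t) = 0\}$, then the occupation density $v \mapsto A(v) := \nu([0,\tau];(-\infty,v])'$... more precisely, writing $\ell(v)$ for the density of the occupation measure $\nu([0,\tau];\,\cdot\,)$ at level $v$ (equivalently, twice the local time of $X$ at level $v$ up to $\tau$), the process $(\ell(v); 0 \le v \le a)$ read from $v = a$ downward to $v = 0$ is a continuous-state branching process with immigration whose branching mechanism is $\psi_0(\lambda) = 2\lambda^2 + 2\alpha\lambda$ (the squared-Bessel-type mechanism adjusted for the drift $-\alpha$) started from $0$ at $v = a$, and then from $v$ at the moment the running level reaches $x$ we have $\ell$ restarted appropriately. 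I would isolate this as a lemma, citing Warren, stated in the precise normalization matching \eqref{eqn:easyPropsforn} and the convention that the density at $x$ equals (a multiple of) $\tau$'s... the key quantitative input is the exponential-functional identity: for a squared Bessel CBI run backwards from level $a$,
\begin{equation*}
\E\left[\exp\left\{-\int_0^a \tfrac12 g(s)\,\ell(s)\,ds\right\}\right]
\end{equation*}
solves an ODE of Riccati type, which is exactly the source of \eqref{eqn:udef} without the $\phi$.

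Next I would apply Theorem~\ref{thm:timeChange1} with $\tau$ as above and $\tau^* = S(\tau)$: this gives $(A^*(v); v \in \R) \overset{d}{=} (S(A(v)); v \in \R)$, where $A^*(v) = \nu^*([0,\tau^*];(-\infty,v])$. Differentiating (or rather, using that $A^*$ inherits absolute continuity only through the drift $\kappa$ of $S$, while the jump part contributes a singular component), one sees that the time-changed occupation measure $\nu^*([0,\tau^*];\,\cdot\,)$ is, conditionally on $X$, the image of $\kappa\,\nu([0,\tau];\,\cdot\,) + \text{(PRM part)}$ exactly as in \eqref{eqn:PRMrepNu*}. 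The crucial point is that
\begin{equation*}
\int_0^{\tau^*} g(X^*(t))\,dt = \int_\R g(v)\,\nu^*([0,\tau^*];dv) \overset{d}{=} \int_\R g(v)\,d\big(S(A(v))\big),
\end{equation*}
and by the exponential formula for the subordinator $S$ applied conditionally on the path $v \mapsto A(v)$ (equivalently, using that $\int g\,d(S\circ A)$ is a Stieltjes integral against a subordinator evaluated at the increasing "clock" $A$, so $\E[\exp\{-\int g\,d(S\circ A)\} \mid A] = \exp\{-\int \phi(g(v))\,dA(v)\}$ — this last identity is the heart of the computation and follows from $\E[\exp\{-\lambda(S(b)-S(a))\}] = \exp\{-(b-a)\phi(\lambda)\}$ together with a Riemann-sum / monotone-class argument), we obtain
\begin{equation*}
\E\left[\exp\left\{-\int_0^{\tau^*} g(X^*(t))\,dt\right\}\right] = \E\left[\exp\left\{-\int_\R \phi(g(v))\,dA(v)\right\}\right] = \E\left[\exp\left\{-\int_0^a \phi(g(v))\,\ell(v)\,dv\right\}\right].
\end{equation*}
So the time-change has simply replaced the functional $g$ by $\phi \circ g$ inside the untimed Brownian occupation-measure Laplace transform.

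Finally I would evaluate the right-hand side using the backward Ray--Knight/CBI description from the lemma. With $\tilde g := \phi \circ g$ (again supported in $[0,a]$), the quantity $\E[\exp\{-\int_0^a \tilde g(v)\,\ell(v)\,dv\}]$ is computed by the standard CBI exponential-functional / cumulant method: run the CBI $(\ell(v))$ backward from $a$; its branching mechanism $\psi_0(\lambda) = 2\lambda^2 + 2\alpha\lambda$ and zero immigration yield, via the backward Kolmogorov equation (a Riccati ODE), that the "cumulant" function $u$ defined by $\E[\exp\{-\int_r^a \tilde g(v)\ell(v)dv\} \mid \ell \text{ restarted from } 0 \text{ at } a] = \exp\{-u(r)\cdot 0 - \cdots\}$ satisfies $u'(r) = -\tilde g(r) + 2u(r)^2 + 2\alpha u(r)$ with $u(a) = 0$, i.e. the integral equation \eqref{eqn:udef} after substituting $\tilde g = \phi \circ g$; and then the contribution from the interval where the backward clock has not yet reached level $x$ — i.e. the initial condition that $X$ starts at $x$, equivalently the immigration-by-$x$ at the bottom — produces the prefactor $\exp\{-2\int_0^x u(s)\,ds\}$. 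I would need to check the constant $2$ carefully against the normalization of local time in \eqref{eqn:easyPropsforn}; uniqueness of the non-negative solution to \eqref{eqn:udef} is standard since the right-hand side is Lipschitz in $u$ on bounded sets and the equation is a terminal-value Riccati ODE.

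The main obstacle I anticipate is bookkeeping around normalizations and the backward direction: pinning down exactly which squared-Bessel dimension / branching mechanism appears in Warren's theorem under \emph{our} local-time convention, getting the factor of $2$ and the form $2(u^2 + \alpha u)$ right, and rigorously justifying the conditional identity $\E[\exp\{-\int g\,d(S\circ A)\}\mid A] = \exp\{-\int \phi(g)\,dA\}$ when $A$ may have flat stretches and jumps (handled by approximating $g$ by simple functions and using dominated convergence, plus the fact that $S$ restricted to disjoint clock-intervals is independent). The CBI Riccati computation itself and the reduction $g \mapsto \phi \circ g$ are, by contrast, routine once Theorems~\ref{thm:timeChange1} and the Warren Ray--Knight lemma are in hand.
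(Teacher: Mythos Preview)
Your plan is correct and follows the paper's route almost exactly: reduce $g$ to $\phi\circ g$ via the subordinator's exponential formula (the paper does this by the direct Stieltjes change of variable $\int_0^{\tau^*} g(X^*(t))\,dt = \int_0^\tau g(X(t))\,dS(t)$ and conditioning on $X$, rather than invoking Theorem~\ref{thm:timeChange1}, but the content is identical), then plug into Warren's Ray--Knight description and evaluate with the CBI exponential-functional/Riccati machinery (the paper's Lemma~\ref{lem:LiLemma}). One point to tighten: Warren's process $L(v)$, read \emph{forward} in $v$, is a $\CBI_0(\psi,\hp)$ with \emph{nonzero} immigration $\hp(\lambda)=2\lambda$ on $[0,x]$, then a $\CBI$ with zero immigration on $[x,\infty)$ started from $L(x)$; the factor $\exp\{-2\int_0^x u(s)\,ds\}$ is exactly the immigration term $\exp\{-\int_0^x \hp(u(s))\,ds\}$, not a terminal ``immigration-by-$x$'' effect, so your backward-running description with ``zero immigration'' is slightly off even though you land on the right formula.
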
 

The proof of Theorem \ref{thm:1d} below 
requires the characterization on positive integral functionals of CBI processes described in, for example, \cite{Li.20}. Similar, although not as clean, representations should exist when $X$ is a Bessel processes of dimension $\delta\ge 2$. This is because of the explicit representations of the occupation measures of these processes in, for example, see \cite{Yor.91} and \cite[Chapter 4]{MY.08}. We suspect similar expressions should be possible when $X$ is a transient regular diffusion \cite{MR.03}. 

\subsection{Time-changed subordinators as integrators}

Because of the distributional identity in Theorem \ref{thm:timeChange1}, we will focus on the processes $\beta = (\beta(v);v\ge 0)$ defined by 
\begin{equation*}
	\beta(v) = S\left({\int_0^v Z(u)\,du}\right) ,
\end{equation*} where $S$ is a strictly increasing subordinator with Laplace exponent $\phi$ and $Z\sim\CBI_x(\psi,\hp)$ is independent. We will denote the law of $\beta(v)$ on $\D$ by $\TIS_x(\phi;\psi,\hp)$.
Clearly, $\beta$ is non-decreasing since $S$ is strictly increasing and CBI processes are non-negative. Therefore, we can view $d\beta(v)$ as a random measure on $\R$. We will compute the Laplace transforms of a wide class of functionals involving the random measure $d\beta(v)$ explicitly.

We start by describing the Laplace transform of integral functionals of $\CBI$ processes. It will be convenient to start CBI processes at an arbitrary positive time $r\ge 0$. We will denote the law of such a process by $\CBI_{r,x}(\psi,\hp)$. That is $(Z(t); t\ge r)\sim\CBI_{r,x}(\psi,\hp)$ if and only if $(Z(t-r); t\ge r)\sim \CBI_x(\psi,\hp)$. Throughout this section we fix some branching mechanism $\psi$ and immigration rate $\hp$, and we will denote by $\PR_{r,x}$ (resp. $\E_{r,x}$) the law of (resp. the expectation with respect to) a $\CBI_{r,x}(\psi,\hp)$ process $Z$.

We state the following lemma and outline a proof.
\begin{lem}\label{lem:LiLemma}
	Let $\mu$ be a Radon measure and let $\lambda$ be a bounded non-negative Borel measurable function. Then
	\begin{equation}\label{eqn:ltOfCBI}
	\E_{r,x} \left[\exp\left\{-\int_r^t \lambda(s) Z(s)\,\mu(ds) \right\} \right] = \exp\left\{- xu(r) - \int_r^t \hp(u(s))\,ds \right\},
	\end{equation} where $u:[0,t]\to \R_+$ is the unique bounded non-negative solution to
	\begin{equation*}
	u(r)+ \int_r^t \psi(u(s))\,ds = \int_r^t \lambda(s)\mu(ds) \qquad r\in[0,t].
	\end{equation*}
\end{lem}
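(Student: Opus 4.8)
The plan is to reduce this to the known form of the Laplace functional of a CBI process over integration against a fixed measure; the essential content is that the Lamperti representation \eqref{eqn:lamperti} turns a CBI into a Lévy-driven equation whose Laplace functional solves a Riccati-type integral equation. First I would recall from the Lamperti construction that under $\PR_{r,x}$ the process $(Z(s);s\ge r)$ solves $Z(s) = x + Y^{(1)}\big(\int_r^s Z(v)\,dv\big) + Y^{(2)}(s-r)$ with $\E[e^{-\lambda Y^{(1)}(t)}] = e^{t\psi(\lambda)}$ and $\E[e^{-\lambda Y^{(2)}(t)}] = e^{-t\hp(\lambda)}$, $Y^{(1)}$ spectrally positive and $Y^{(2)}$ a subordinator independent of $Y^{(1)}$. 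The functional $\int_r^t \lambda(s)Z(s)\,\mu(ds)$ is a nonnegative additive functional of the (time-inhomogeneous, because of the clock $s$) Markov process, so the Markov property gives a multiplicative structure that one expects to encode as an integral equation for $u$.

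The main step is to establish \eqref{eqn:ltOfCBI} together with existence and uniqueness of the bounded nonnegative solution $u$ of the integral equation $u(r) + \int_r^t \psi(u(s))\,ds = \int_r^t \lambda(s)\,\mu(ds)$. For existence/uniqueness I would argue backward from $s=t$: writing $F(r) := \int_r^t \lambda(s)\,\mu(ds)$, the equation reads $u(r) = F(r) - \int_r^t \psi(u(s))\,ds$, i.e. $u$ is a fixed point of a Volterra-type operator on $[0,t]$. Since $\psi$ is the Laplace exponent (up to sign) of a spectrally positive Lévy process it is convex with $\psi(0)=0$ and locally Lipschitz on $[0,\infty)$; combined with the a priori bound $0\le u(r)\le F(0) = \int_0^t\lambda(s)\,\mu(ds) < \infty$ (which follows because $\psi\ge$ a linear lower bound near $0$ is not needed — one uses $\psi$ bounded below on bounded sets and a standard comparison), Picard iteration on the Volterra operator converges and gives a unique bounded nonnegative solution. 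This is precisely the setting treated in \cite{Li.20} (see also \cite{CPU.13}), so I would cite that rather than reprove it.

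For the identity \eqref{eqn:ltOfCBI} itself, I would use the backward (Kolmogorov) approach: fix $t$ and the data $(\lambda,\mu)$, and for $r\in[0,t]$ and $x\ge 0$ set $V(r,x) := \E_{r,x}\big[\exp\{-\int_r^t \lambda(s)Z(s)\,\mu(ds)\}\big]$. Using the Markov property at a small time increment $r+h$ and the branching property of the CBI (which makes $x\mapsto \log V(r,x)$ affine, so $V(r,x) = \exp\{-x u(r) - w(r)\}$ for some nonnegative functions $u,w$ with $u(t)=w(t)=0$), one differentiates in $r$: the contribution of the displacement over $[r,r+h]$ produces the term $-\lambda(r)x\,\mu(dr)$, the branching part produces $\psi(u(r))$ via $\E_x[e^{-\lambda Y^{(1)}(\epsilon)}] \approx 1+\epsilon\psi(\lambda)$ through the clock $\int_r^{r+h}Z$, and the immigration part produces $\hp(u(r))$ via $\E[e^{-\lambda Y^{(2)}(h)}]\approx 1 - h\hp(\lambda)$. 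Collecting the $x$-linear and $x$-free parts yields $du(r) = \big(\psi(u(r)) - \lambda(r)\,\tfrac{d\mu}{?}\big)$ in the appropriate Stieltjes sense — more cleanly stated in integrated form as the claimed equation for $u$ — and $w(r) = \int_r^t \hp(u(s))\,ds$. Evaluating $V(r,x)$ at the left endpoint and reading off $u(r)$ and $w(r)$ gives \eqref{eqn:ltOfCBI}.

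The hard part, and the reason I would lean on \cite{Li.20} instead of grinding it out, is handling the measure $\mu$ with possible atoms (so the "derivative" argument above is really about jumps of $u$ at atoms of $\mu$, and one must check that $u$ remains bounded and that the Volterra fixed point and the probabilistic quantity genuinely coincide) together with the fact that the CBI clock $s\mapsto s$ in the immigration term and the clock $\int_r^s Z$ in the branching term run on different time scales; making the infinitesimal computation rigorous requires either an approximation of $\mu$ by absolutely continuous measures and a stability argument for the integral equation, or a direct martingale/optional-stopping argument showing that $M(s) := \exp\{-\int_r^s \lambda Z\,d\mu - u(s)Z(s) - \int_r^s\hp(u)\}$ is a bounded martingale under $\PR_{r,x}$ (using that $u$ solves the integral equation). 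I would use the martingale route: define $M$ as above, apply Itô's formula / the CBI generator to verify $M$ is a local martingale, note boundedness, and take expectations between $r$ and $t$; the terminal condition $u(t)=0$ collapses $M(t)$ to $\exp\{-\int_r^t\lambda Z\,d\mu\}$ and $M(r) = e^{-xu(r)}$, yielding \eqref{eqn:ltOfCBI} after accounting for the deterministic $\hp$-term. This is exactly the computation recorded in \cite{Li.20}, so in the write-up I would state the integral equation, cite the existence/uniqueness and the Laplace-functional formula from there, and include only the short verification that our $(\lambda,\mu)$ fit the hypotheses.
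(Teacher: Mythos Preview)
Your martingale-verification route is sound in outline but is a genuinely different argument from the paper's. The paper does not use It\^{o}'s formula or a backward PDE at all: it approximates $\lambda(s)\,\mu(ds)$ by discrete measures $\sum_{j=1}^n \lambda_j\,\delta_{t_j}$, proves the identity for such discrete measures by induction on $n$ using only the Markov property and the one-point Laplace transform of a CBI (this is the content of Lemma~\ref{lem:11}), and then passes to the limit, citing \cite{Li.20} for the approximation procedure in the case $\hp=0$. This is more elementary---no stochastic calculus, no generator computation---and handles atoms of $\mu$ for free, since atoms are exactly the building blocks of the approximation. Your approach is more conceptual and gives a one-shot verification once $u$ is constructed, but the atom issue you flag is precisely where it becomes technical: at an atom $s_0$ the function $u$ jumps, and you must check that the multiplicative jump of $M$ across $s_0$ matches, which amounts to redoing the discrete step anyway. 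One small slip: with your definition $M(s)=\exp\{-\int_r^s\lambda Z\,d\mu - u(s)Z(s) - \int_r^s\hp(u)\}$, the immigration drift contributes $-\hp(u(s))\,ds$ to $d\log M$, so the compensator should be $+\int_r^s\hp(u)$ (or equivalently put $-\int_s^t\hp(u)$) for $M$ to be a martingale and for the endpoint evaluation to produce the correct sign on the $\hp$-integral in \eqref{eqn:ltOfCBI}.
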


 In the case $\hp = 0$, the above lemma can be found in \cite{Li.20} where it is proved by approximating $\lambda(s)\mu(ds)$ by discrete measures. This lemma follows from the same approximation procedure. We just establish the initial lemma, and omit the rest of the proof.
\begin{lem}\label{lem:11}
Let $\lambda_j \ge 0$ and let $0\le t_1<\dotsm< t_n<\infty$. Then
\begin{equation*}
\E_{r,x}\left[ \exp\left\{-\sum_{j=1}^n \lambda_j Z({t_j}) 1_{[r\le t_j]}\right\} \right] = \exp\left\{ -x u(r) - \int_r^{t_n} \hp(u(s))\,ds \right\} ,
\end{equation*} where $u$ is the unique bounded non-negative solution to
\begin{equation*}
u(r) + \int_r^t \psi(u(s))\,ds = \sum_{j=1}^n \lambda_j 1_{[r\le t_j]}.
\end{equation*}
\end{lem}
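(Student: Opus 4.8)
The plan is to reduce the $n$-point functional to a backward recursion over the intervals $[t_{j-1}, t_j]$, at each step using only the single-time Laplace transform of a $\CBI$ process, which is classical (see \cite{KW.71}). First I would establish the base case: for a $\CBI_{t,x}(\psi,\hp)$ process, $\E_{t,x}[e^{-\lambda Z(t)}] = e^{-\lambda x}$, which is consistent with \eqref{eqn:ltOfCBI} since the solution of $u(r) + \int_r^t \psi(u(s))\,ds = \lambda$ on $[r,t]$ specializes, and more importantly the known transition formula
\begin{equation*}
\E_{r,x}\left[e^{-\lambda Z(t)}\right] = \exp\left\{-x\,v_{t-r}(\lambda) - \int_0^{t-r}\hp(v_s(\lambda))\,ds\right\},
\end{equation*}
where $v_s(\lambda)$ solves the backward equation $\partial_s v_s(\lambda) = -\psi(v_s(\lambda))$, $v_0(\lambda) = \lambda$. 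Changing variables $u(r) := v_{t-r}(\lambda)$ turns this into exactly the integral equation in the statement with $n=1$. The uniqueness and boundedness of the non-negative solution $u$ on $[r,t]$ follows because $\psi$ is the Laplace exponent of a spectrally positive L\'evy process, hence locally Lipschitz on $[0,\infty)$ with $\psi(0)=0$, so the ODE $u'(r) = \psi(u(r))$ (read backward from $u(t)=\lambda\ge 0$) has a unique solution staying in $[0,\lambda]$; this is the step I would spell out carefully.

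Next I would run the induction on $n$. Assume the formula holds for $n-1$ points $t_1 < \dots < t_{n-1}$ (and any starting time $r$ and starting value $x$). For $n$ points, condition on $\F_{t_{n-1}}$ and use the Markov property at time $t_{n-1}$: writing $Y = \sum_{j=1}^{n-1}\lambda_j Z(t_j)1_{[r\le t_j]}$ (which is $\F_{t_{n-1}}$-measurable once $r \le t_{n-1}$; the case $r > t_{n-1}$ is handled by dropping those terms), one gets
\begin{equation*}
\E_{r,x}\left[e^{-Y - \lambda_n Z(t_n)}\right] = \E_{r,x}\left[e^{-Y}\,\E_{t_{n-1},Z(t_{n-1})}\!\left[e^{-\lambda_n Z(t_n)}\right]\right].
\end{equation*}
By the $n=1$ case, the inner expectation equals $\exp\{-Z(t_{n-1})\,w(t_{n-1}) - \int_{t_{n-1}}^{t_n}\hp(w(s))\,ds\}$ where $w$ solves $w(s) + \int_s^{t_n}\psi(w(\sigma))\,d\sigma = \lambda_n$ on $[t_{n-1},t_n]$. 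Pull the deterministic term $\int_{t_{n-1}}^{t_n}\hp(w)\,ds$ out of the expectation, and then the remaining expectation is an $(n-1)$-point functional of the $\CBI_{r,x}$ process with the last coefficient $\lambda_{n-1}$ replaced by $\lambda_{n-1} + w(t_{n-1})$ if $t_{n-1}$ is among the original points, or with a new point at $t_{n-1}$ of weight $w(t_{n-1})$ otherwise. Applying the inductive hypothesis produces $\exp\{-x u(r) - \int_r^{t_{n-1}}\hp(u(s))\,ds\}$, and one checks that gluing $u$ on $[r,t_{n-1}]$ to $w$ on $[t_{n-1},t_n]$ yields precisely the function solving $u(r) + \int_r^t\psi(u(s))\,ds = \sum_{j=1}^n \lambda_j 1_{[r\le t_j]}$ on $[r,t_n]$, with the jump of the right-hand side at each $t_j$ matching the jump discontinuity of $u$ there. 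Adding the two immigration integrals gives $\int_r^{t_n}\hp(u(s))\,ds$, completing the induction.

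The main obstacle is the bookkeeping at the gluing step: the solution $u$ of the piecewise-constant-data integral equation is continuous on each open interval $(t_{j-1}, t_j)$ but has downward jumps of size $\lambda_j$ at each $t_j$ (reading forward), and one must verify that concatenating the locally-defined solutions respects these jumps and that the resulting $u$ is the \emph{unique} bounded non-negative solution globally. I would handle this by noting the integral equation is equivalent to the backward ODE $u'(s) = \psi(u(s))$ on each $(t_{j-1},t_j)$ with terminal/jump conditions $u(t_n{-}) $ determined by $u(t_n) = 0$, $u(t_n{-}) = \lim$, and $u(t_j{-}) = u(t_j{+}) + \lambda_j$; monotonicity of the flow of $u' = \psi(u)$ (again using $\psi$ locally Lipschitz, $\psi(0)=0$, nondecreasing on where it matters since $\psi$ is convex with $\psi(0)=0$) gives comparison, hence uniqueness, and boundedness by $\sum_j \lambda_j$. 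The rest — pulling deterministic factors out of expectations, relabeling the coefficient list — is routine, and the passage from this lemma to the general Lemma \ref{lem:LiLemma} by approximating $\lambda(s)\mu(ds)$ with atomic measures and passing to the limit (monotone/dominated convergence on both sides, stability of the integral equation) follows \cite{Li.20} and is omitted as stated.
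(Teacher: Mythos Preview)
Your proof is correct and follows essentially the same approach as the paper: induction on $n$, an application of the Markov property at one of the $t_j$'s, use of the single-time Laplace transform from \cite{KW.71} together with the induction hypothesis, and gluing the resulting solutions of the backward integral equation. The only cosmetic difference is that the paper conditions at the \emph{first} time $t_1$ (applying the induction hypothesis to the inner expectation over $t_2,\dots,t_n$ and the $n=1$ case to the outer), whereas you condition at $t_{n-1}$ (applying the $n=1$ case to the inner and the induction hypothesis to the outer); the gluing verification works the same way in either direction.
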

\begin{proof}
	The follow the approach in \cite{Li.20} and use the Markov property and induction. This holds for $n = 1$ \cite{KW.71}. By induction we suppose that this holds for any collection of $n-1$ many $\lambda_j$'s and $t_j$'s. Since any $t_j\in[0,r)$ play no role in either the expectation nor the desired claim, we can without loss of generality assume $0\le r\le t_1<t_2<\dotsm<t_n$.

	We have by the strong Markov property for $Z$ and the induction hypothesis
	\begin{align*}
	\E_{r,x}\left[\exp\left\{-\sum_{j=1}^n \lambda_j Z(t_j)  \right\} \right] &= \E_{r,x}\left[ \E_{r,x}\left[ \exp\left\{-\lambda_1Z(t_1) - \sum_{j=2}^n \lambda_j Z(t_j) \right\} \Bigg| Z(t_1) \right]\right]\\
	&= \E_{r,x} \left[ e^{-\lambda_1 Z(t_1)} \E_{r,x} \left[ \exp \left\{-\sum_{j=2}^n \lambda_j Z(t_j) \right\} \Bigg| Z(t_1)\right] \right]\\
	&= \E_{r,x} \left[ e^{-\lambda_1 Z(t_1)} \E_{t_1, Z(t_1)} \left[\exp\left\{-\sum_{j=2}^n \lambda_j Z(t_j) \right\} \right] \right]\\
	&= \E_{r,x} \left[\exp\left\{-\lambda_1 Z(t_1) - w(t_1)Z(t_1) - \int_{t_1}^{t_n} \hp(w(s))\,ds\right\} \right],
	\end{align*} where $w$ is the unique non-negative solution to
	\begin{equation*}
	w(r)+ \int_r^t \psi(w(s))\,ds = \sum_{j=2}^n \lambda_j 1_{[r\le t_j]},\qquad r\in [0,t_n].
	\end{equation*}
	
	However, 
	\begin{equation*}
	\begin{split}
	\E_{r,x} &\left[\exp\left\{-\lambda_1 Z(t_1) - w(t_1)Z(t_1) - \int_{t_1}^{t_n} \hp(w(s))\,ds\right\} \right] \\
	&= \exp\left\{ - x\tilde{u}(r)- \int_r^{t_1}\hp(\tilde{u}(s))\,ds - \int_{t_1}^{t_n} \hp(w(s))\,ds \right\},
	\end{split}
	\end{equation*} where $\tilde{u}$ is the unique bounded non-negative solution to	
	\begin{equation*}
	\tilde{u}(r)+ \int_r^{t_1} \psi(\tilde{u}(s))\,ds = (\lambda_1 + w(t_1)) 1_{[r\le t_1]},\qquad r\in [0,t_1]. 
	\end{equation*} It is easy to check that the desired claim holds with
	\begin{equation*}
	u(s) = \left\{ 
	\begin{array}{ll}
	\tilde{u}(s) &: s\in[0,t_1]\\
	w(s) &: s\in[t_1,t_n]
	\end{array}
	\right..
	\end{equation*} 
\end{proof}

Now to describe the process $\beta(v)$ as an integrator, we will set $V = (V_t;t\ge 0)$ as the right-continuous inverse of $v\mapsto \int_0^v Z(u)\,du$. That is
\begin{equation*}
V_t = \inf\left\{v: \int_0^v Z(u)\,du>t\right\}.
\end{equation*}

A change-of-variable formula for Stieljes measures \cite[Proposition 0.4.10]{RY.99} yields the following lemma, the proof of which is omitted.
\begin{lem}\label{lem:extraLem}
	Suppose that $\hp$ is the Laplace transform of a strictly increasing subordinator. Then
	\begin{equation}\label{eqn:changeOfVariable1}
	\int_0^\infty g(v)\,d\beta(v) {=} \int_0^\infty g(V_t)\,dS(t),\qquad \forall \text{ bounded Borel $g$ with compact support}.
	\end{equation}
\end{lem}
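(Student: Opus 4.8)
The plan is to identify the random measure $d\beta$ as the image of the Lebesgue--Stieltjes measure $dS$ under the inverse time change $t\mapsto V_t$, and then invoke the change-of-variables formula for image measures. First I would set $A(v)=\int_0^v Z(u)\,du$, so that $\beta(v)=S(A(v))$ and $V$ is the right-continuous inverse of $A$. The map $A$ is a.s.\ continuous (indeed locally Lipschitz, since $Z$ is c\`{a}dl\`{a}g and hence locally bounded), and I would argue it is a.s.\ strictly increasing: if $Z$ vanished Lebesgue-a.e.\ on some interval $[a,b]$, then $A$ would be constant there, so by the Lamperti equation \eqref{eqn:lamperti} the term $Y^{(1)}\!\left(\int_0^v Z\,du\right)$ would be constant on $[a,b]$ and hence $Z(v)=x+Y^{(1)}(A(a))+Y^{(2)}(v)$ would be strictly increasing on $[a,b]$, because $Y^{(2)}$ --- the subordinator with Laplace transform $\hp$ --- is strictly increasing; this contradicts $Z\equiv 0$ a.e.\ on $[a,b]$. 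Consequently $V$ is a.s.\ continuous and strictly increasing, with $A(V_t)=t$ and $V_{A(v)}=v$ on the relevant ranges, and $\{t:V_t\le v\}=[0,A(v)]$ for every $v\ge 0$.

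With this in hand the key identity is $d\beta = V_\#(dS)$, the pushforward of $dS$ under $V$. I would verify it on half-open intervals: for $0\le a<b$,
\begin{equation*}
 d\beta\big((a,b]\big)=S(A(b))-S(A(a))=dS\big((A(a),A(b)]\big)=dS\big(\{t:V_t\in(a,b]\}\big),
\end{equation*}
using continuity and strict monotonicity of $V$ in the last step; since two finite measures agreeing on half-open intervals coincide, the identity follows. (This is precisely the content of the change-of-variable formula for Stieltjes measures, \cite[Proposition 0.4.10]{RY.99}.) Then the change-of-variables formula for image measures gives, for every bounded Borel $g$,
\begin{equation*}
 \int_0^\infty g(v)\,d\beta(v)=\int_0^\infty g(V_t)\,dS(t),
\end{equation*}
and when $g$ is supported in $[0,K]$ both sides are finite a.s.: the left side is at most $\|g\|_\infty\,\beta(K)$, and the right side equals $\int_{(0,A(K)]} g(V_t)\,dS(t)\le \|g\|_\infty\,S(A(K))$, since $V_t>K$ forces $g(V_t)=0$.

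I do not expect a serious obstacle here; the only point needing genuine care is the a.s.\ strict monotonicity of $A$ (equivalently, the continuity of $V$), which is exactly where the hypothesis that $\hp$ is the Laplace transform of a \emph{strictly increasing} subordinator enters. An alternative route that sidesteps strict monotonicity would be to use that $S$ and $Z$ are independent, so that a.s.\ $S$ charges none of the (countably many) levels at which a merely right-continuous inverse $V$ might jump; but invoking strict monotonicity of $A$ is cleaner and makes the image-measure identity hold on the nose.
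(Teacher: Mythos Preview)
Your proposal is correct and takes the same approach as the paper: the paper simply cites the change-of-variable formula for Stieltjes measures \cite[Proposition 0.4.10]{RY.99} and omits the proof entirely, while you have filled in the details of precisely that argument. Your justification via the Lamperti representation \eqref{eqn:lamperti} of why the hypothesis on $\hp$ forces $A(v)=\int_0^v Z(u)\,du$ to be strictly increasing (and hence $V$ continuous) is a useful elaboration that the paper leaves implicit.
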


If the equality in \eqref{eqn:changeOfVariable1} holds, we can use Lemma \ref{lem:LiLemma} and write for each bounded Borel measurable $g$ with compact support
\begin{align*}
\E&\left[\exp \left\{- \int_0^\infty  g(v)\,d\beta(v) \right\} \right]
= \E \left[ \exp \left\{- \int_0^\infty  g(V_t)\,dS(t) \right\}\right]\\
&= \E \left[\E\left[\exp\left\{-\int_0^\infty g(V_t)\,dS(t) \right\} \Bigg| (Z(t);t\ge 0)\right] \right]\\
&= \E\left[\exp\left\{-\int_0^\infty \phi(g(V_t))\,dt \right\} \right]\\
&= \E\left[\exp\left\{-\int_0^\infty \phi(g(t)) Z(t)\,dt \right\} \right]\\
&= \exp\left\{-x u(0) - \int_0^t \hp (u(s))\,ds \right\},
\end{align*} where $u$ is the unique bounded non-negative solution to
\begin{equation*}
u(r)+ \int_r^\infty  \psi(u(s))\,ds = \int_r^\infty \phi(g(s))\,ds.
\end{equation*} For the Laplace transform of stochastic Stieljes integrals involving a subordinator as the integrator - as was used in third equality above - we used the exponential martingale formula as found in, for example, see Section 5.2 of Applebaum's monograph \cite{Applebaum.09}. In the fourth equality we used the change-of-variable for Stieljes measures again.

Combined with Lemma \ref{lem:extraLem}, we have proved the following:
\begin{thm} \label{thm:ltOfIntegralsOfCBI}
	Suppose that $g$ is a bounded Borel measurable function with compact support. Let $\beta\sim \TIS_x(\phi;\psi,\hp)$, where $\hp$ is either zero or the Laplace transform of a strictly increasing L\'{e}vy process.
	
	Then
	\begin{equation*}
	\E\left[ \exp\left\{-\int_0^\infty g(v)\,d\beta(v) \right\}\right] = \exp\left\{-xu(0)-\int_0^\infty \hp(u(s))\,ds \right\},
	\end{equation*} where $u$ is the unique non-negative solution to 
	\begin{equation*}
	u(r)+\int_r^\infty \psi(u(s))\,ds = \int_r^\infty \phi\circ g(s)\,ds.
	\end{equation*}
\end{thm}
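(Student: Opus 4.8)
The plan is to reduce everything to three ingredients already assembled before the statement: Lemma \ref{lem:extraLem}, the exponential formula for stochastic Stieltjes integrals driven by a subordinator, and Lemma \ref{lem:LiLemma}.

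First I would apply Lemma \ref{lem:extraLem}. Since $\hp$ is either identically zero or the Laplace transform of a strictly increasing subordinator, its hypothesis holds, and writing $V = (V_t;t\ge 0)$ for the right-continuous inverse of $v\mapsto \int_0^v Z(u)\,du$ we obtain
\[
\int_0^\infty g(v)\,d\beta(v) = \int_0^\infty g(V_t)\,dS(t).
\]
Because $g$ is supported in some $[0,a]$ and $V$ is non-decreasing, the integrand $t\mapsto g(V_t)$ is bounded and vanishes once $t\ge \int_0^a Z(u)\,du$, so every integral below is over a bounded set.

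Next I would condition on the $\CBI$ process $Z$ (equivalently, on the path $V$), which is legitimate since $S$ is independent of $Z$. Applying the exponential formula for Stieltjes integrals against a subordinator \cite[Section 5.2]{Applebaum.09} to the deterministic, bounded, compactly supported integrand $g(V_\cdot)$ yields
\[
\E\!\left[\exp\Big\{-\int_0^\infty g(V_t)\,dS(t)\Big\}\,\Big|\,Z\right] = \exp\Big\{-\int_0^\infty \phi(g(V_t))\,dt\Big\}.
\]
A second use of the change-of-variable formula for Stieltjes measures \cite[Proposition 0.4.10]{RY.99}, now undoing the time change $t=\int_0^v Z(u)\,du$ (so $dt = Z(v)\,dv$), rewrites the exponent as $\int_0^\infty \phi(g(v))\,Z(v)\,dv$, which equals $\int_0^a \phi(g(v))\,Z(v)\,dv$ since $\phi\circ g$ vanishes off $[0,a]$. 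Taking expectations over $Z$ reduces the claim to computing $\E\big[\exp\{-\int_0^a \phi(g(v))\,Z(v)\,dv\}\big]$.

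Finally I would invoke Lemma \ref{lem:LiLemma} on $[0,a]$ with $\lambda(v)=\phi(g(v))$ — bounded since $\phi$ is continuous and non-decreasing and $g$ is bounded — and $\mu$ the Lebesgue measure, obtaining the value $\exp\{-x u(0) - \int_0^a \hp(u(s))\,ds\}$, where $u$ is the unique bounded non-negative solution of $u(r)+\int_r^a \psi(u(s))\,ds = \int_r^a \phi(g(s))\,ds$ on $[0,a]$. Extending $u$ by $0$ beyond $a$ turns this into the half-line equation $u(r)+\int_r^\infty \psi(u(s))\,ds=\int_r^\infty \phi(g(s))\,ds$ and makes $\int_0^\infty \hp(u(s))\,ds=\int_0^a \hp(u(s))\,ds$, giving exactly the asserted identity. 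The two points I expect to require care — and where the real work sits — are (i) checking the integrability and measurability hypotheses that license the exponential formula and the Fubini-type interchange of the conditional and outer expectations, and (ii) justifying the passage from the finite-horizon conclusion of Lemma \ref{lem:LiLemma} to the half-line equation, i.e. uniqueness of the non-negative solution on $[0,\infty)$, which follows from the compact support of $g$ together with the local Lipschitz/monotonicity structure of $\psi$.
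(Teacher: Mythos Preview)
Your proposal is correct and follows essentially the same route as the paper: apply Lemma \ref{lem:extraLem} to pass to $\int g(V_t)\,dS(t)$, condition on $Z$ and use the exponential formula for subordinators to replace $g$ by $\phi\circ g$, undo the time change to obtain $\int \phi(g(v))Z(v)\,dv$, and then invoke Lemma \ref{lem:LiLemma}. Your added remarks on integrability and on extending $u$ by zero beyond the support of $g$ make explicit what the paper leaves implicit; the only point to watch is that Lemma \ref{lem:extraLem} is stated for $\hp$ strictly increasing, so the case $\hp=0$ (where $Z$ may be absorbed at $0$) strictly speaking requires a separate, though easy, justification.
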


\subsection{Proof of Theorem \ref{thm:1d}}

\begin{proof}[Proof of Theorem \ref{thm:1d}]
Throughout this section we suppose that $X  = (X(t);t\ge 0)$ is a Brownian motion with negative drift started from level $x\ge 0$; i.e. $X(t) = x+ B(t) -\alpha t$ for some $\alpha\ge 0$. Let $\tau = \inf\{t: X(t) = 0\}$. A result of Warren \cite{Warren.97} states the occupation measure of $X$ on the interval $[0,\tau]$ satisfies
\begin{equation}\label{eqn:occupationTime}
\int_0^\tau g(X(s))\,ds = \int_0^\infty g(v) L(v)\,dv,
\end{equation} where $L(v)$ is a weak solution to the stochastic differential equation
\begin{equation*}
L(v) = 2(x\wedge v) + 2\int_0^v \sqrt{L(y)}\,dW(y) - 2\alpha \int_0^v L(y)\,dy.
\end{equation*} The process $L$ is a concatenation of two Markov process:
\begin{equation}\label{eqn:LinTwoZs}
L(v) = \left\{
\begin{array}{ll}
Z^{(1)}(v) &: v\in[0,x] \\
Z^{(2)}(v) &: v\ge x
\end{array}
 \right.
\end{equation} where $(Z^{(1)}(v); v\ge 0)$ is a $\CBI_0(\psi,\hp)$ process and, conditionally on $Z^{(1)}(x) = y$, the process $(Z^{(2)}(v); v\ge x)\sim \CBI_{x,y}(\psi,0)$ with
\begin{equation*}
\psi(\lambda) = 2 \lambda^2 + 2\alpha \lambda,\qquad \hp(\lambda) = 2\lambda.
\end{equation*} 

We can write by conditioning on the paths of $X$
\begin{equation*}
	\E\left[\exp(-\int_0^{\tau^*} g(X^*(s))\,ds) \right] = \E\left[\exp(-\int_0^{\tau} g(X(t))\,dS(t) )\right] = \E\left[\exp(-\int_0^\tau \phi\circ g(X(t))\,dt ) \right].
\end{equation*}
However, almost surely
\begin{equation*}
\int_0^\tau h(X(s))\,ds  = \int_0^\infty  h(v)\, L(v)\,da, \qquad \forall h\ge 0 \quad \text{Borel}.
\end{equation*} 
The rest of the proof follows as in Lemma \ref{lem:LiLemma}. Indeed, note that $h(v) = \phi(g(v))$ has support contained in $[0,a]$ since $\phi(0) = 0$. Letting $\F_s = \sigma(L(u); u\le s)$ and using the decomposition in \eqref{eqn:LinTwoZs} in terms of two Markov processes we get:
\begin{equation*}
	\begin{split}
		\E&\left[ \exp(-\int_0^\infty h(v)\, L(v)\,dv)\right] = \E\left[\exp(-\int_0^x h(v)L(v)\,dv) \E\left[ \exp(-\int_x^\infty h(v) L(v)\,dv) \bigg| \F_x\right] \right]\\
		&= \E_{0,0} \left[\exp(-\int_0^x h(v) Z^{(1)}(v)\,dv) \E_{x,Z^{(1)}(x)} \left[\exp\left(-\int_x^\infty h(v) Z^{(2)} (v)\,dv \right) \right] \right]\\
		&= \E_{0,0}\left[ \exp\left\{ -\int_0^x h(v) Z^{(1)}(v)\,dv - Z^{(1)}(x) w(x)\right\} \right]
	\end{split}
\end{equation*}
where $w$ is the unique bounded solution to
\begin{equation*}
	w(r) + \int_r^a (2 w(s)^2+ 2\alpha w(s))\,ds = \int_r^a h(s)\,ds.
\end{equation*}
Now using a similar argument to proof of Lemma \ref{lem:11}, we can write:
\begin{equation*}
\E_{0,0}\left[ \exp\left\{ -\int_0^x h(v) Z^{(1)}(v)\,dv - Z^{(1)}(x) w(x)\right\} \right] = \exp\left\{ -\int_0^x 2 u(s)\,ds\right\}
\end{equation*} where $u$ solves \eqref{eqn:udef}. 
\end{proof}

\section*{Acknowledgements}

DC is partially supported by NSF grant DMS-1444084 and NSF grant DMS-205223.


\bibliographystyle{imsart-number} 



\end{document}